\title[Diagonal operators and $q$-Whittaker functions]{Diagonal operators, $q$-Whittaker functions\\ and rook theory}     
\def\PP{\mathbb{P}}
\def\Fq{\mathbb{F}_q}
\def\inv{\mathrm{inv}}
\def\BB{\mathcal{B}}
\def\AA{\mathcal{A}}
\def\TT{\mathcal{T}} 
\def\SS{\mathcal{S}}
\def\salpha{\mathcal{S}_\alpha}
\def\ssyt{\rm SSYT}
\def\syt{\rm SYT}
\def\bb{\mathsf{b}}
\def\diag{\mathrm{diag}}
\def\ninv{\mathrm{ninv}}
\def\MM{{\rm M}}
\def\tabnmu{{\rm Tab}_{\subseteq [n]}(\mu)}
\def\tabnmualpha{{\rm Tab}_{\subseteq [n]}(\mu,\alpha)}
\def\sstabmua{{\rm SSTab(\mu,\alpha)}}
\def\bx{{\bf x}}
\theoremstyle{definition}
\newtheorem{theorem}{Theorem}[section]
\newtheorem{corollary}[theorem]{Corollary}
\newtheorem{example}[theorem]{Example}
\newtheorem{lemma}[theorem]{Lemma}  
\newtheorem{problem}[theorem]{Problem}
\newtheorem{proposition}[theorem]{Proposition}
\newtheorem{definition}[theorem]{Definition}
\newtheorem{remark}[theorem]{Remark}
\author{Samrith Ram} 
\address{Indraprastha Institute of Information Technology Delhi, New Delhi, India.}
\email{samrith@iiitd.ac.in}
\author[Michael J.\ Schlosser]{Michael J.\ Schlosser} 
\address{Fakult\"at f\"ur Mathematik, Universit\"at Wien,
Oskar-Morgenstern-Platz~1, A-1090 Vienna, Austria}
\email{michael.schlosser@univie.ac.at} 
\subjclass[2020]{15B33, 05A15, 05A18, 05A05, 05E05, 11B65} 
\keywords{diagonal matrix, finite field, semistandard tableau, Mahonian statistic, $q$-Whittaker function, chord diagram, Touchard--Riordan formula, $q$-Stirling number, $q$-rook theory.} 
\begin{document}
\begin{abstract}
  We discuss the problem posed by Bender, Coley, Robbins and Rumsey of enumerating the number of subspaces which have a given profile with respect to a linear operator over the finite field $\Fq$. We solve this problem in the case where the operator is diagonalizable. The solution leads us to a new class of polynomials $b_{\mu\nu}(q)$ indexed by pairs of integer partitions. These polynomials have several interesting specializations and can be expressed as positive sums over semistandard tableaux. We present a new correspondence between set partitions and semistandard tableaux. A close analysis of this correspondence reveals the existence of several new set partition statistics which generate the polynomials $b_{\mu\nu}(q)$; each such statistic arises from a Mahonian statistic on multiset permutations. The polynomials $b_{\mu\nu}(q)$ are also given a description in terms of coefficients in the monomial expansion of $q$-Whittaker symmetric functions which are specializations of Macdonald polynomials. We express the Touchard--Riordan generating polynomial for chord diagrams by number of crossings in terms of $q$-Whittaker functions. We also introduce a class of $q$-Stirling numbers defined in terms of the polynomials $b_{\mu\nu}(q)$ and present connections with $q$-rook theory in the spirit of Garsia and Remmel.         
\end{abstract} 
\maketitle
\tableofcontents
\section{Introduction}     
Let $\Fq$ denote the finite field with $q$ elements where $q$ is a prime power. Let $n$ be a positive integer and denote by $\MM_n(\Fq)$ the space of $n\times n$ matrices over $\Fq$.
\begin{definition}\label{def:profile}
Given a matrix $\Delta \in \MM_n(\Fq)$, a subspace $W$ of $\Fq^n$ has $\Delta$-profile $\mu=(\mu_1,\mu_2,\ldots)$ if 
\begin{align*}
  \dim (W+\Delta W+\cdots +\Delta^{j-1}W)=\mu_1+\cdots+\mu_j \quad (j\geq 1). 
\end{align*}
\end{definition}
It can be shown that the $\Delta$-profile of each subspace is a weakly decreasing sequence of nonnegative integers with finite sum \cite[p.\ 2]{MR1141317}; thus it can be viewed as an integer partition. The $\Delta$-profile of a subspace is also referred to as `dimension sequence' \cite{MR1141317}. Let $\sigma(\mu,\Delta)$ denote the number of subspaces of $\Fq^n$ with $\Delta$-profile $\mu$. We are primarily interested in the following problem. 
\begin{problem}\label{prob:main}
Give an explicit formula for $\sigma(\mu,\Delta)$ for arbitrary $\mu$ and $\Delta$.
\end{problem}
This problem was originally posed by Bender, Coley, Robbins and Rumsey \cite[p.\ 2]{MR1141317} who gave elegant product formulas in the cases where $\Delta$ has irreducible characteristic polynomial or is regular nilpotent (nilpotent with one-dimensional null space). When $\Delta$ has irreducible characteristic polynomial, they proved that 
\begin{equation}\label{eq:irreducibleprofiles}
  \sigma(\mu,\Delta) = \frac{q^n-1}{q^{\mu_1}-1}\prod_{i\geq 2}q^{\mu_i^2-\mu_i}{\mu_{i-1} \brack \mu_i}_q.
\end{equation}
Similarly, when $\Delta$ is regular nilpotent,
\begin{displaymath}
  \sigma(\mu,\Delta) = \prod_{i\geq 2}q^{\mu_i^2}{\mu_{i-1} \brack \mu_i}_q.
\end{displaymath}

Here ${n \brack k}_q$ denotes a $q$-binomial coefficient, given by
\begin{align*}
  {n \brack k}_q=\prod_{i=0}^{k-1} \frac{q^n-q^i}{q^k-q^i}.
\end{align*}
Unaware of the work of the authors of \cite{MR1141317}, proofs of Eq.\ \eqref{eq:irreducibleprofiles} in the special case where $\mu$ is a rectangular partition of $n$ appear later on in the work of Chen and Tseng \cite{MR3093853} and in \cite{MR4263652}. In this special case, the problem of determining $\sigma(\mu,\Delta)$ has interesting connections with group theory and finite projective geometry and is related to the splitting subspace problem posed by Niederreiter \cite[p.\ 11]{MR1334623} in the context of pseudorandom number generation. We refer to \cite{MR2831705,MR2961399} for details on this topic. It is easy to see that $\sigma(\mu,\Delta)$ depends only on the conjugacy class of $\Delta$ since $W$ has $\Delta$-profile $\mu$ if and only if $PW$ has $P\Delta P^{-1}$-profile $\mu$ for each linear isomorphism $P$ of $\Fq^n$. More generally, it can be shown that $\sigma(\mu,\Delta)$ depends only on the similarity class type (in the sense of Green \cite{MR72878}) of $\Delta$ \cite[Cor.\ 4.7]{pr}. An answer to Problem \ref{prob:main} in the case where the invariant factors of $\Delta$ satisfy certain degree constraints and $\mu$ has equal parts appears in \cite{MR4448290}. The case where $\mu$ is a partition of the ambient dimension with exactly two parts was resolved in \cite{prasad2023enumeration}. A formula is also known \cite{pr} in the case where $\Delta$ is a regular diagonal matrix (the diagonal entries are distinct). The regular diagonal case has interesting connections with several classical combinatorial objects as we outline below.   
\begin{theorem}\label{thm:regularprofiles}\cite[Thm.\ 4.8]{pr}
    For each integer partition $\mu$, there exists a polynomial $b_{\mu}(t)$ with nonnegative integer coefficients such that 
  \begin{align*}
    \sigma(\mu,\Delta)={n \choose |\mu|}(q-1)^{\sum_{j\geq 2} \mu_j}q^{\sum_{j\geq 2}{\mu_j \choose 2}}b_{\mu}(q),
  \end{align*}
for every prime power $q$ and every regular diagonal matrix $\Delta\in \MM_n(\Fq).$
\end{theorem}
The polynomials $b_\mu(t)$ above have several interesting specializations. $b_\mu(1)$ equals the number of set partitions of an $n$-element set of shape $\mu'$ (the conjugate of $\mu$) while $b_\mu(0)$ equals the number of standard Young tableaux of shape $\mu'$. In addition, $b_\mu(-1)$ equals the number of standard shifted tableaux of shape $\mu'$ when $\mu'$ has distinct parts. In the case where $\mu=(m,m)$, we have $b_{\mu}(t)=T_m(t)$, the Touchard generating polynomial for chord diagrams on $2m$ points by number of crossings. 
 The polynomials $b_\mu(t)$ can be obtained by summing a polynomial statistic over standard tableaux of shape $\mu'$ and also via a new statistic on set partitions called the interlacing number. Several classical objects like the Stirling numbers and their $q$-analogs (as defined by Carlitz \cite{MR1501675}) as well as the Bell numbers can be expressed in terms of the polynomials $b_\mu(t)$ and their specializations. The reader is referred to \cite{pr} for further details.        

In this paper, we investigate Problem \ref{prob:main} in the case where $\Delta$ is an arbitrary diagonal matrix. Before we state our main results, it will be convenient to introduce some notation. Throughout this article $[n]$ denotes the set of the first $n$ positive integers while $\Pi_n$ denotes the collection of all set partitions of $[n]$. By convention we write set partitions in standard form: the elements in each block are listed in increasing order while the blocks are written in increasing order of their least elements. The \emph{shape} of a set partition is the integer partition obtained by arranging the cardinalities of its blocks in weakly decreasing order. A \emph{weak composition} of an integer $n$ is a sequence $\alpha=(\alpha_1,\ldots,\alpha_k)$ of nonnegative integers with $|\alpha|:=\sum_{i=1}^k \alpha_i=n$. In addition, if each $\alpha_i>0$, we say that $\alpha$ is a composition of $n$. 
\begin{definition}\label{def:canonical}
  To each composition $\alpha=(\alpha_1,\ldots,\alpha_k)$ of $n$, we associate a canonical set partition $\mathcal{A}_\alpha\in \Pi_n$ by setting $\mathcal{A}_\alpha=\cup_{i=1}^kA_i$ where $A_i$ $(1\leq i\leq k)$ is the set of $\alpha_i$ consecutive integers given by $$A_i=\{\alpha_1+\cdots+\alpha_{i-1}+1,\ldots,\alpha_1+\cdots+\alpha_{i}\}.$$
  \end{definition}
For instance, when $\alpha=(2,3,1)$, we have $\AA_\alpha=\{\{1,2\},\{3,4,5\},\{6\}\}$ which we abbreviate to $12|345|6$. The \emph{type} of a diagonal matrix is the integer partition obtained by listing the multiplicities of its diagonal entries in weakly decreasing order. We are now ready to state the main results of this paper. We obtain the following answer to Problem~\ref{prob:main} in the diagonal case, thereby generalizing Theorem \ref{thm:regularprofiles}.

\begin{theorem}\label{thm:intro}
For each pair $\mu,\nu$ of integer partitions, there exists a polynomial $b_{\mu\nu}(t)$ with nonnegative integer coefficients such that 
  \begin{align*}
    \sigma(\mu,\Delta)=(q-1)^{\sum_{j\geq 2} \mu_j}q^{\sum_{j\geq 2}{\mu_j \choose 2}}b_{\mu\nu}(q),
  \end{align*}
for every prime power $q$ and every diagonal matrix $\Delta\in \MM_n(\Fq)$ of type $\nu$.
\end{theorem}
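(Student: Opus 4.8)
The plan is to read the profile off the eigenspace decomposition of $\Delta$ and to organise the subspaces with a fixed profile into cells, each contributing a monomial in $q$, so that $\sigma(\mu,\Delta)$ becomes a finite sum of powers of $q$ from which the claimed prefactor factors out.

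First I would reduce to the combinatorics of the multiplicities. Writing $\Fq^n=\bigoplus_{i=1}^rV_i$ for the eigenspace decomposition of $\Delta$, with $V_i$ the $\lambda_i$-eigenspace and $\dim V_i=\nu_i$, the quantity $\sigma(\mu,\Delta)$ depends only on the similarity class type of $\Delta$, hence only on $\nu$, $\mu$ and $q$ (as recorded in the introduction); so we may work in coordinates adapted to this decomposition. The basic mechanism is the Vandermonde obtained from the distinctness of the $\lambda_i$: for a single vector $w$ with support $S(w)=\{i:\rho_i(w)\neq0\}$, where $\rho_i$ is the projection onto $V_i$, the vectors $\Delta^kw=\sum_i\lambda_i^k\rho_i(w)$ for $0\le k\le j-1$ span a space of dimension $\min(j,|S(w)|)$, and the invariant hull $\hat W=\sum_{k\ge0}\Delta^kW$ equals $\bigoplus_i\rho_i(W)$. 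In particular $\mu_1=\dim W$ and $|\mu|=\dim\hat W=\sum_i\dim\rho_i(W)$, so the outer parts of $\mu$ are already controlled by the eigenspace supports.

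Next I would place each $W$ in a canonical echelon basis relative to an eigenbasis ordered by eigenvalue, as in the proof of Theorem \ref{thm:regularprofiles}. The discrete skeleton of $W$ — the pivot pattern together with the record of which eigenspaces each echelon generator meets and with what multiplicity — is a combinatorial invariant, while the remaining data of the basis range freely (leading coefficients over $\Fq^\times$, off-pivot entries over $\Fq$). This partitions the subspaces into cells of size a monomial $(q-1)^aq^b$ indexed by skeleta. Using the single-vector computation promoted to the level of a simultaneous rank count, I would show that the whole filtration $W_j=W+\Delta W+\dots+\Delta^{j-1}W$, and hence the profile $\mu$, is constant on each cell and is determined by the skeleton through the $\min(j,|S|)$ rule; this is where the multiplicities $\nu_i$ intervene, since several generators may now inhabit the same eigenspace.

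Finally, summing the cell sizes over all skeleta with profile $\mu$, I expect the exponents to separate into a part common to every such skeleton, namely $(q-1)^{\sum_{j\ge2}\mu_j}q^{\sum_{j\ge2}\binom{\mu_j}{2}}$, times a residual power $q^{\mathrm{stat}}$ depending on the skeleton; setting
\[
  b_{\mu\nu}(t)=\sum_{\text{skeleta with profile }\mu}t^{\mathrm{stat}}
\]
then yields a polynomial in $t$ with nonnegative integer coefficients and proves the theorem. I expect the skeleta to be in bijection with semistandard tableaux of a shape built from $\mu$ and content bounded by $\nu$ (recovering standard tableaux and the factor $\binom{n}{|\mu|}$ when $\nu=(1^n)$), which is the bridge to the tableau and $q$-Whittaker formulas of the later sections. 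The main obstacle is the middle step: showing that, when eigenvalues are repeated, the passage from the per-vector $\min(j,|S|)$ rule to a correct simultaneous rank computation on all of $W$ produces precisely the parts of $\mu$, and that the residual exponent is a genuine statistic, so that the common factorization in the last step holds uniformly in $\mu$.
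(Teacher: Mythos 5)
There is a genuine gap at the central step of your plan: the $\Delta$-profile of $W$ is \emph{not} constant on the cells cut out by your ``discrete skeleton'' (pivot pattern plus the eigenspace supports of the echelon generators), so summing monomial cell sizes over skeleta does not compute $\sigma(\mu,\Delta)$. Concretely, take $\Delta=\diag(1,2,3,4)$ over $\mathbb{F}_5$ (already the regular case $\nu=(1^4)$) and let $W$ be the row space of
\[
\begin{pmatrix} 1 & 0 & a & b\\ 0 & 1 & c & d\end{pmatrix}
\]
with $a,b,c,d$ all nonzero. Every such $W$ has the same pivots $(1,2)$, the same generator supports, and nonzero projection onto every eigenspace, yet $\dim(W+\Delta W)=2+\operatorname{rank}\begin{pmatrix}2a&3b\\ c&2d\end{pmatrix}$, which is $3$ when $4ad=3bc$ and $4$ otherwise: the profile is $(2,1,1)$ for $(a,b,c,d)=(3,4,1,1)$ and $(2,2)$ for $(a,b,c,d)=(1,1,1,1)$. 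So the single-vector Vandermonde rule $\min(j,|S(w)|)$ does not propagate to a simultaneous rank count governed by discrete support data; you flag exactly this as ``the main obstacle,'' but it is not a technicality to be smoothed over --- it is the entire content of the theorem, and as stated your stratification is too coarse for the profile to be a cell invariant at all.

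The paper's proof takes a genuinely different, recursive route that never stratifies by such a skeleton. Writing $W$ as the row space of $(I\mid X)$ with $X$ of shape $C-\delta$, Theorem \ref{thm:recurrence} shows via block row operations that $W$ has $\Delta$-profile $\mu$ if and only if the row space of the single matrix $Y=X\Delta_2-\Delta_1X$ has $\Delta_2$-profile $(\mu_2,\mu_3,\ldots)$. The count then factors as: the number of $X$ over a fixed $Y$ (a pure power of $q$, Corollary \ref{cor:fiber}), times the number of $Y$ with a fixed row space (which is $\eta_\alpha(C,D)=(q-1)^{\mu_2}q^{\binom{\mu_2}{2}}r_q(\SS_\alpha(T(C,D)))$ by Lemma \ref{lem:eta}, coming from a full-column-rank count $\prod_j(q^{r_{2j}+j-1}-q^{j-1})$ rather than from a monomial cell), times the same problem one row of $\mu$ shorter. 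Unwinding the recursion (Theorems \ref{thm:sigmac} and \ref{thm:profilesviassyt}) produces the prefactor $(q-1)^{\sum_{j\ge2}\mu_j}q^{\sum_{j\ge2}\binom{\mu_j}{2}}$ and leaves a manifestly nonnegative sum of products of $q$-binomial coefficients over semistandard tableaux, which is $b_{\mu\nu}(q)$. Any direct cell decomposition would have to refine your skeleta by rank conditions on submatrices of $X$ at every level of the filtration --- at which point you have rediscovered the recursion.
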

We note that $b_{\mu\nu}(t)={n \choose |\mu|} b_{\mu}(t)$ when $\nu$ has all parts equal to 1. In the case where $\mu$ is a partition of $n$, the polynomials $b_{\mu\nu}(t)$ have the following specializations: $b_{\mu\nu}(0)$ is the Kostka number $K_{\mu\nu}$, defined as the number of semistandard tableaux of shape $\mu$ and content $\nu$; $b_{\mu\nu}(1)$ equals the number of set partitions $\AA\in \Pi_n$ of shape $\mu'$  such that no block of $\AA$ intersects any block of the canonical set partition $\AA_\nu$ in more than one element (the meet of $\AA$ and $\AA_\nu$ in the lattice $\Pi_n$ is the minimal element).

Motivated by the above specializations, we prove that the polynomials $b_{\mu\nu}(t)$ can be written as positive sums over semistandard tableaux of shape $\mu$ and content $\nu$. We also prove the existence of several new set partition statistics that generate the polynomials $b_{\mu\nu}(t)$. More precisely, we show that to each Mahonian statistic (in the sense of Definition~\ref{def:mahonian}) on multiset permutations, one can associate a set partition statistic that generates the polynomials $b_{\mu\nu}(t)$ (Theorem \ref{thm:bviasetstat}). The expressions for $b_{\mu\nu}(t)$ in terms of semistandard tableaux and set partitions lead us to an intriguing elementary correspondence (see Section~\ref{sec:semistandardandset}) between semistandard tableaux and set partitions. To the best of our knowledge, this correspondence appears to be new. Theorem \ref{thm:intro} has proved to be a crucial ingredient in the resolution of the general case of Problem \ref{prob:main} (see \cite{ram2023subspace}).

In Section \ref{sec:symmetric}, we explore connections between the polynomials $b_{\mu\nu}(t)$ and the theory of symmetric functions. In this context, and throughout this paper, it will be convenient to treat $q$ as a formal variable, which we will often specialize to a prime power. The $q$-Whittaker functions $W_\lambda({\bf x};q)$ were defined by Gerasimov, Lebedev and Oblezin \cite{MR2575477} and may be viewed as joint eigenfunctions of $q$-deformed Toda chain Hamiltonians with support in the positive Weyl chamber (see Etingof \cite{MR1729357} or Ruijsenaars \cite{MR1090424}). They also arise as specializations of a more general and well-studied class of symmetric functions, the Macdonald polynomials $P_\lambda({\bf x};q,t)$, as $W_\lambda({\bf x};q)=P_\lambda({\bf x};q,0)$ (Macdonald \cite{macdonald1988new}). We prove the following relation between the polynomials $b_{\mu\nu}(q)$ and the $q$-Whittaker functions.        
\begin{theorem}\label{thm:whittakerconn}  
If $W_{\mu}=\sum_{\nu}a_{\mu\nu}(q)m_\nu$ denotes the monomial expansion of the $q$-Whittaker function, then
  \begin{align*}
 b_{\mu\nu}(q)   =\frac{\prod_{i\geq 1}[\nu_i]_q!}{\prod_{i\geq 1}[\mu_i-\mu_{i+1}]_q!}  a_{\mu\nu}(q),
  \end{align*}
  where $[n]_q!$ denotes the product $\prod_{i=1}^n(1+q+\cdots+q^{i-1})$.
\end{theorem}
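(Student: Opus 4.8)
The plan is to prove the identity by matching two explicit monomial (equivalently, semistandard-tableau) expansions: the combinatorial formula for $b_{\mu\nu}(q)$ established in the earlier sections, and the classical branching-rule expansion of the $q$-Whittaker function obtained by specializing Macdonald's combinatorial formula at $t=0$. Since $W_\mu=P_\mu(\bx;q,0)$, both $b_{\mu\nu}(q)$ and $a_{\mu\nu}(q)$ will appear as sums over the same index set, namely the semistandard tableaux of shape $\mu$ and content $\nu$, and the theorem will reduce to reconciling the two systems of weights through the single universal scalar $\prod_i[\nu_i]_q!\big/\prod_i[\mu_i-\mu_{i+1}]_q!$.

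First I would recall Macdonald's formula $P_\mu(\bx;q,t)=\sum_T\psi_T(q,t)\,\bx^{\mathrm{wt}(T)}$, the sum being over semistandard tableaux $T$ of shape $\mu$, where $\psi_T=\prod_k\psi_{\lambda^{(k)}/\lambda^{(k-1)}}$ runs over the chain $\emptyset=\lambda^{(0)}\subseteq\lambda^{(1)}\subseteq\cdots\subseteq\lambda^{(\ell)}=\mu$ recording the positions of the entries $1,2,\dots$, each skew shape $\lambda^{(k)}/\lambda^{(k-1)}$ being a horizontal strip. Specializing the Pieri coefficient at $t=0$ collapses it to a product of $q$-binomial coefficients,
\[
\psi_{\lambda/\kappa}(q,0)=\prod_{i\ge 1}{\lambda_i-\lambda_{i+1}\brack \lambda_i-\kappa_i}_q,
\]
so that, reading off the coefficient of $m_\nu$,
\[
a_{\mu\nu}(q)=\sum_{T\in\mathrm{SSYT}(\mu,\nu)}\ \prod_{k\ge 1}\psi_{\lambda^{(k)}/\lambda^{(k-1)}}(q,0).
\]
As sanity checks, for $\mu=(n)$ a single row this reproduces the $q$-multinomial coefficient $[n]_q!\big/\prod_i[\nu_i]_q!$, and for $\mu=(2,2)$, $\nu=(1^4)$ it gives $2+3q+q^2$.

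Next I would align this with the tableau expansion of $b_{\mu\nu}(q)$ established earlier and compare weights. The normalization factor is clarified by evaluating the Macdonald scalar $b_\mu(q,t)=\prod_{s\in\mu}(1-q^{a(s)}t^{l(s)+1})/(1-q^{a(s)+1}t^{l(s)})$ at $t=0$: only the cells of zero leg survive, and grouping the columns of $\mu$ by length gives $b_\mu(q,0)=\prod_{r\ge1}(q;q)_{\mu_r-\mu_{r+1}}^{-1}$, where $(q;q)_m=(1-q)^m[m]_q!$. Since $\sum_r(\mu_r-\mu_{r+1})=\mu_1$, this identity rewrites the claimed scalar as $\prod_i[\nu_i]_q!\big/\prod_i[\mu_i-\mu_{i+1}]_q!=(1-q)^{\mu_1}\big(\prod_i[\nu_i]_q!\big)\,b_\mu(q,0)$, which is precisely the scalar converting $P_\mu$ into $Q_\mu=b_\mu(q,0)P_\mu$ together with the factor relating the monomial $m_\nu$ to its $Q$-normalized counterpart. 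The theorem thereby reduces to checking that the earlier weight of each $T\in\mathrm{SSYT}(\mu,\nu)$ agrees, after this rescaling, with the branching weight $\psi_T(q,0)$.

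The main obstacle is exactly this last matching: proving that multiplying the branching weights $\psi_T(q,0)$ by the one universal scalar reproduces, term by term or in aggregate, the statistic-based weights of the $b_{\mu\nu}$-expansion. I expect this to come down to a $q$-binomial/$q$-factorial identity in which the denominators $\prod_i[\mu_i-\mu_{i+1}]_q!$ (arising from the final strip $\lambda^{(\ell)}=\mu$) and the numerators $\prod_i[\nu_i]_q!$ (arising from the sizes of the successive horizontal strips) cancel against the $q$-binomials inside $\psi_T$. The worked examples above, in which $\psi_T(q,0)$ rescaled by the scalar is manifestly a polynomial with nonnegative coefficients for each $T$ (e.g.\ $b_{(n)\nu}=1$ and $b_{(2,2),(1^4)}=2+q$), strongly suggest a clean term-by-term identity, and hence a route that bypasses any appeal to the full eigenfunction or orthogonality characterization of $W_\mu$.
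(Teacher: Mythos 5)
Your overall strategy is exactly the one the paper follows: expand $a_{\mu\nu}(q)$ via the $t=0$ specialization of Macdonald's branching formula as a sum over chains $\emptyset=\mu^{0}\subseteq\cdots\subseteq\mu^{k}=\mu$ of horizontal $\nu_j$-strips with weight $\prod_j\psi_{\mu^{j}/\mu^{j-1}}(q)$, expand $b_{\mu\nu}(q)$ over the same chains (equivalently over $\ssyt(\mu,\nu)$), and match the two weight systems term by term through the universal scalar. Your formula for $\psi_{\lambda/\kappa}(q,0)$ and both sanity checks are correct. However, the proposal stops precisely where the actual work begins, and this is a genuine gap rather than a routine omission. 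Two computations are asserted but not performed. First, you never extract from the definition $b_{\mu\nu}(q)=\sum_{T\in\ssyt(\mu,\nu)}r_q(T)s_q(T)$ the per-chain weight: one must show that the entries equal to $j$ in $T$ contribute exactly
\begin{align*}
  \theta_{\mu^{j}/\mu^{j-1}}(q)=\frac{[\nu_j]_q!}{[\mu^{j}_1-\mu^{j-1}_1]_q!}\prod_{i\geq 1}{\mu^{j-1}_i-\mu^{j-1}_{i+1}\brack \mu^{j}_{i+1}-\mu^{j-1}_{i+1}}_q
\end{align*}
to the product $r_q(T)s_q(T)$, which requires identifying the contribution of those entries to $s_q(T)$ as a single $q$-multinomial coefficient and their contribution to $r_q(T)$ as $\prod_i{\rho_i-\rho_{i+1}\brack\lambda_{i+1}-\rho_{i+1}}_q[\lambda_{i+1}-\rho_{i+1}]_q!$ (with $\rho=\mu^{j-1}$, $\lambda=\mu^{j}$); this depends on the specific combinatorial definitions of $r_{ij}(T)$ and $\beta^i(T)$ and is not automatic. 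Second, the term-by-term identity
\begin{align*}
  \prod_{j=1}^{k}\theta_{\mu^{j}/\mu^{j-1}}(q)=\frac{\prod_{i\geq 1}[\nu_i]_q!}{\prod_{i\geq 1}[\mu_i-\mu_{i+1}]_q!}\prod_{j=1}^{k}\psi_{\mu^{j}/\mu^{j-1}}(q)
\end{align*}
is only conjectured (``I expect this to come down to a $q$-binomial/$q$-factorial identity''); it does hold and telescopes cleanly, but it must be checked. Your digression through $b_\mu(q,0)=\prod_{r\geq1}(q;q)_{\mu_r-\mu_{r+1}}^{-1}$ is a correct identity but does not advance the proof: the claimed interpretation of the scalar as the $P_\mu\mapsto Q_\mu$ conversion factor is not used anywhere and would not substitute for the per-tableau verification. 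In short, the skeleton matches the paper's proof, but the two computations that constitute its substance are missing.
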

Let $M_{\lambda\nu}$ denote the number of binary integer matrices with row sums $\lambda$ and column sums $\nu$. It is well-known that the numbers $M_{\lambda\nu}$ occur as coefficients in the monomial expansion of elementary symmetric functions (Stanley \cite[Prop.\ 7.4.1]{MR1676282}). As an application of Theorem~\ref{thm:whittakerconn}, we give an efficient nonrecursive formula for computing $M_{\lambda\nu}$ when the Kostka number $K_{\lambda'\nu}$ is small (Corollary~\ref{cor:binmatviassyt}).  

Let $T_m(q)$ denote the generating polynomial for chord diagrams on $2m$ points by their number of crossings. There is a beautiful analytic formula for these polynomials, referred to as the Touchard--Riordan formula (see Eq.\ \eqref{eq:touchard-riordan}).  The study of subspace profiles for regular diagonal matrices has recently led to a new proof of this formula \cite{MR4555237}. Using results in Section \ref{sec:symmetric}, we prove (Theorem \ref{thm:touchardasscalar}) that these polynomials admit a compact representation in terms of $q$-Whittaker functions:  
\begin{align*}
    T_m(q)=\frac{1}{[m]_q!} \langle W_{(m,m)},h_1^{2m} \rangle.
\end{align*}
Here $\langle\cdot,\cdot \rangle$ denotes the Hall scalar product \cite[p.\ 63]{MR1354144} on the ring of symmetric functions while $h_\lambda$ denotes the complete homogeneous symmetric function indexed by the partition $\lambda$.

In Section \ref{sec:stirling}, we define a new class of $q$-Stirling numbers $S_q(n,m;\nu)$ indexed by integer partitions $\nu$ of $n$. These numbers are defined in terms of the polynomials $b_{\mu\nu}(q)$ by
\begin{align*}
    S_q(n,m;\nu):=\sum_{\substack{\mu\vdash n\\\mu_1=m}}q^{\sum_{j\geq 2}{\mu_j \choose 2}}b_{\mu\nu}(q),
\end{align*}
where the sum is taken over all partitions $\mu$ of $n$ with first part $m$. In the case where $\nu$ has all parts equal to 1, $S_q(n,m;\nu)$ coincides with the $q$-Stirling numbers of the second kind defined by Carlitz. In Section \ref{sec:rooktheory} we discuss connections with $q$-rook theory introduced by Garsia and Remmel \cite{MR834272}. We prove that the $q$-Stirling numbers $S_q(n,m;\nu)$ above arise naturally as $q$-rook numbers of certain truncated staircase boards.

The paper is organized as follows. In Section \ref{sec:semistandardandset} we discuss the elementary correspondence between semistandard tableaux and set partitions. Section \ref{sec:setstat} consists of a description of the various set partition statistics which arise from Mahonian statistics and which generate the polynomials $b_{\mu\nu}(q)$. In Section \ref{sec:diagonal} we use a cell decomposition of the Grassmanian to prove that the polynomials $b_{\mu\nu}(q)$ arise naturally when counting subspaces with profile $\mu$ with respect to a diagonal operator of type $\nu$. The connection with $q$-Whittaker symmetric functions and the Touchard--Riordan formula is explored in Section \ref{sec:symmetric}. In Sections \ref{sec:stirling} and \ref{sec:rooktheory}, we define a class of $q$-Stirling numbers indexed by integer partitions and present connections with $q$-rook theory.

\section{Semistandard tableau associated to a set partition}
\label{sec:semistandardandset}
In this section we discuss the correspondence between semistandard tableaux and set partitions stated in the introduction. This correspondence is of an elementary nature and can be explained without reference to the subspace counting problem discussed in the introduction. We begin with some notation.

A partition of an integer $n$ is a weakly decreasing sequence $\mu=(\mu_1,\mu_2,\ldots)$ of nonnegative integers with sum $n$. If $\mu$ is a partition of $n$, we write $\mu\vdash n$. We also write $|\mu|$ for the sum $\sum_{i\geq 1}\mu_i$. The positive $\mu_i$ $(i\geq 1)$ are referred to as the parts of $\mu$. The number of parts of $\mu$ is called the length of $\mu$, denoted $\ell(\mu)$. It is customary to omit trailing zeroes when writing partitions. For instance, the partition $(3,1,1,0,0,\ldots)$ of $5$ is considered equivalent to $(3,1,1)$. The Young diagram of a partition $\mu$ is an array of cells arranged in rows such that the $i$th row has precisely $\mu_i$ cells for $1\leq i\leq \ell(\mu)$. A tableau of shape $\mu$ is a filling of the Young diagram of $\mu$ with positive integers. A tableau of shape $\mu$ is said to be \emph{standard} if its entries are precisely the first $|\mu|$ positive integers and the entries in each row and column are strictly increasing from left to right and top to bottom (see Figure \ref{fig:standssyt}). Given a weak composition $\alpha=(\alpha_1,\alpha_2,\ldots)$, a \emph{semistandard} tableau of shape $\mu$ and content $\alpha$ is tableau of shape $\mu$ in which the rows are weakly increasing from left to right, the columns are strictly increasing from top to bottom and the entry $i$ appears precisely $\alpha_i$ times (see Figure \ref{fig:standssyt}). The set of semistandard tableaux of shape $\mu$ and content $\alpha$ is denoted ${\rm SSYT}(\mu,\alpha)$. 
\ytableausetup{smalltableaux}

  \begin{figure}[h]
    \centering
     \begin{ytableau}
       1 & 2 & 4 & 5& 9 \\
       3 & 6 & 7 & \none &\none\\
       8 &\none&\none&\none &\none
     \end{ytableau}\qquad
  \begin{ytableau}
       1 & 1 & 1 & 2& 3 \\
       2 &3 & 3 & \none &\none\\
       3 &\none&\none&\none &\none
\end{ytableau}.
    \caption{Left: A standard tableau of shape $(5,3,1)$. Right: A semistandard tableau of shape $(5,3,1)$ and content $(3,2,4).$}
    \label{fig:standssyt}
  \end{figure}

  \begin{definition}
Two set partitions $\AA$ and $\mathcal{B}$ are \emph{minimally intersecting} if each block of $\AA$ intersects each block of $\mathcal{B}$ in at most one element.     
  \end{definition}
The collection of set partitions of $[n]$ of a given shape $\lambda$ is denoted $\Pi_n(\lambda)$. Let $\lambda$ be a partition of $n$ and suppose $\alpha$ is a composition of $n$. Let $\AA_\alpha$ denote the canonical set partition corresponding to $\alpha$ as in Definition \ref{def:canonical}. Denote by $\Pi(\lambda,\alpha)$ the collection of all set partitions $\AA\in \Pi_n(\lambda)$ such that $\AA$ and $\AA_\alpha$ are minimally intersecting. For every partition $\mu$, let $\mu'$ denote the corresponding conjugate partition.

  To each element in $\Pi(\mu',\alpha)$ one can associate a semistandard tableau $T\in \ssyt(\mu,\alpha)$ in a canonical way. An example will serve to illustrate the general case. Suppose $\mu=(5,3,1)$ and $\alpha=(3,3,3)$. The set partition $\AA=16|2|348|57|9$ of shape $\mu'$ is represented by the following array: 
\begin{center}
\ytableausetup{smalltableaux}
\begin{ytableau}
       1 & 2 & 3 & 5& 9 \\
       6  & \none & 4 & 7 &\none\\
       \none &\none&8&\none &\none
\end{ytableau}.
\end{center}

  Now successively perform the following steps to obtain tableaux as shown in Figure \ref{fig:partitiontossyt}.
\begin{enumerate}
\item[Step 1:] Left justify the cells in each row and then sort each row in ascending order, to obtain a tableau of shape $\mu$.
\item[Step 2:] For each positive integer $x\leq |\alpha|$, let $\SS_\alpha(x)$ denote the least integer $i$ for which $x\leq \alpha_1+\cdots+\alpha_i$. Apply $\SS_\alpha$ to each entry of the tableau obtained in Step 1 to obtain another tableau of shape $\mu$.
\end{enumerate}
  \begin{figure}[h]
    \centering
    \begin{ytableau}
       1 & 2 & 3 & 5& 9 \\
       6  & \none & 4 & 7 &\none\\
       \none &\none&8&\none &\none
     \end{ytableau}$\xrightarrow{\text{ Step 1 }}$
     \begin{ytableau}
       1 & 2 & 3 & 5& 9 \\
       4 & 6 & 7 & \none &\none\\
       8 &\none&\none&\none &\none
     \end{ytableau}$\xrightarrow{\text{ Step 2 }}$
  \begin{ytableau}
       1 & 1 & 1 & 2& 3 \\
       2 & 2 & 3 & \none &\none\\
       3 &\none&\none&\none &\none
\end{ytableau}.
    \caption{Standard and semistandard tableaux associated to $\AA=16|2|348|57|9$.}  
    \label{fig:partitiontossyt}
  \end{figure}

Somewhat remarkably, Step 1 applied to any set partition $\AA\in \Pi_n(\mu')$ always yields a standard tableau of shape $\mu$  \cite[Lem.\ 2.6]{pr}. Let $\syt(\mu,\alpha)$ denote the set of standard tableaux of shape $\mu$ such that no column of $T$ contains more than one element in any block of $\AA_\alpha$. We will prove that steps 1 and 2 above correspond to surjective maps $\TT$ and $\SS_\alpha$ as follows: 
\begin{align*}
  \Pi(\mu',\alpha) \xrightarrow{\ \displaystyle\TT\ }\syt(\mu,\alpha)\xrightarrow{\ \displaystyle\SS_\alpha\ } \ssyt(\mu,\alpha). 
\end{align*}
The maps $\TT$ and $\SS_\alpha$ for $\mu=(3,2)$ and $\alpha=(2,2,1)$ are shown in Figure \ref{fig:correspondence}.
\begin{figure}[!ht] 
  \centering
  \includegraphics[scale=.6]{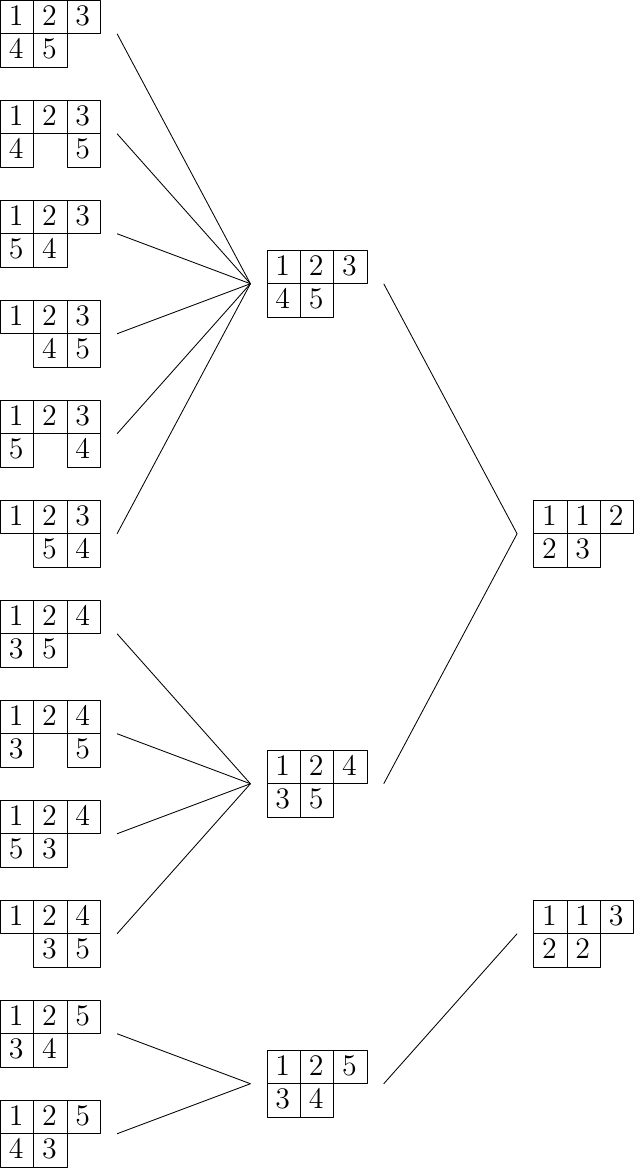}
  \caption{The set partitions, tableaux and semistandard tableaux corresponding to the maps $\TT$ and $\SS_\alpha$ for $\mu=(3,2)$ and $\alpha=(2,2,1)$.}       
  \label{fig:correspondence}
\end{figure}

\begin{proposition}
  $\TT$ maps $\Pi(\mu',\alpha)$ onto $\syt(\mu,\alpha)$.
\end{proposition}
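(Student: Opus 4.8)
The plan is to analyze $\TT$ through the ``column function'' of the associated standard tableau, treating well-definedness and surjectivity separately: surjectivity will be almost immediate once the right preimage is identified, while well-definedness is where the minimally intersecting hypothesis does the real work. First I would record a convenient description of $\TT$. Given $\AA\in\Pi_n(\mu')$, each $z\in[n]$ lies in a unique block of $\AA$; let $\rho(z)$ be the rank of $z$ within that block, so that $z$ occupies row $\rho(z)$ of the array of $\AA$, and hence of $T=\TT(\AA)$, which is standard of shape $\mu$ by \cite[Lem.~2.6]{pr}. Writing $R_i=\{z:\rho(z)=i\}$ for the content of row $i$ and $f_i(v)=|\{z\in R_i:z\leq v\}|$, the key observation is that, since Step~1 only sorts each row, the column of $z$ in $T$ equals $f_{\rho(z)}(z)$, the rank of $z$ among the entries of its own row. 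I would also record the two monotonicities of $f$: it is nondecreasing in $v$, and for fixed $v$ it is nonincreasing in $i$, the latter via the injection sending the $i'$th smallest element of a block to its $i$th smallest element whenever $i<i'$.

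For surjectivity I would take, for a given $T\in\syt(\mu,\alpha)$, the set partition $\AA$ whose blocks are exactly the columns of $T$. Its shape is $\mu'$; the defining property of $\syt(\mu,\alpha)$ says precisely that each column of $T$, i.e. each block of $\AA$, meets each block of $\AA_\alpha$ at most once, so $\AA$ and $\AA_\alpha$ are minimally intersecting and $\AA\in\Pi(\mu',\alpha)$. Finally $\TT(\AA)=T$, because the blocks of $\AA$ listed by least element are just the columns of $T$ read from left to right (the top entries $T(1,1)<T(1,2)<\cdots$ are the block minima), so the array of $\AA$ is literally $T$, on which Step~1 acts trivially since the rows of a standard tableau are already left-justified and increasing.

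The substance of the proposition is well-definedness: that $T=\TT(\AA)\in\syt(\mu,\alpha)$ whenever $\AA\in\Pi(\mu',\alpha)$. Here I must show that any two elements $x<y$ lying in a common block $I$ of $\AA_\alpha$ land in distinct columns of $T$, i.e. $f_{\rho(x)}(x)\neq f_{\rho(y)}(y)$. When $\rho(x)=\rho(y)$ this is immediate from strict monotonicity in $v$, and when $\rho(x)>\rho(y)$ it follows from the two monotonicities together with an injection of $\{z\in R_{\rho(x)}:z\leq x\}$ into $\{z\in R_{\rho(y)}:z\leq y\}$ (sending each element to a lower entry of its own block) that omits $y$ and hence strictly drops the count; this case uses only $x<y$. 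The genuinely delicate case is $\rho(x)<\rho(y)$, where the column order reverses. Here I would use that $I$ is an interval, together with minimal intersection, to show first that the block containing $y$ has its $(\rho(y)-1)$st element already below $x$, and then build an injection from $\{z\in R_{\rho(y)}:z\leq y\}$ into $\{z\in R_{\rho(x)}:z\leq x\}$ (again descending within the block) that misses $x$ itself, forcing $f_{\rho(y)}(y)<f_{\rho(x)}(x)$.

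I expect this reversed case to be the main obstacle. The two nontrivial verifications are that every value produced by the injection really falls at or below $x$, and that $x$ is genuinely omitted: if either failed, one would obtain an element together with its image, or $y$ together with $x$, inside the single interval $I$ yet in a common block of $\AA$, contradicting minimal intersection. Both verifications therefore rest squarely on the hypothesis $\AA\in\Pi(\mu',\alpha)$, which is exactly why the property propagates from the blocks of $\AA$ to the columns of the sorted tableau $T$ even though Step~1 rearranges the entries within each row.
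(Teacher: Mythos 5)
Your argument is correct and rests on the same two pillars as the paper's proof: surjectivity via the set partition whose blocks are the columns of a given $T\in\syt(\mu,\alpha)$, and well-definedness via the injection sending each element to a lower-ranked element of its own block of $\AA$, combined with the facts that blocks of $\AA_\alpha$ are intervals and that minimal intersection forces such a predecessor of an element of $I$ strictly below $\min(I)$. The only difference is organizational: the paper argues by contradiction on two \emph{adjacent} rows (pigeonholing the first $j$ entries of row $i$ into the $j-1$ positions left of the entry immediately above), whereas you compare the column indices $f_{\rho(x)}(x)$ and $f_{\rho(y)}(y)$ directly for arbitrary row pairs --- note that your case $\rho(x)>\rho(y)$ is vacuous for same-column pairs, since columns of a standard tableau increase downward.
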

\begin{proof}
  For each $\AA\in \Pi(\mu',\alpha)$, it follows by \cite[Lem.\ 2.6]{pr} that $\TT(\AA)$ is a standard tableau of shape $\mu$. Thus $\TT$ maps $\Pi(\mu',\alpha)$ into $\syt(\mu)$. Suppose $\alpha=(\alpha_1,\ldots,\alpha_k)$ and write $\mathcal{A}_\alpha=\cup_{i\geq 1}A_i$ where $$A_i=\{\alpha_1+\cdots+\alpha_{i-1}+1,\ldots,\alpha_1+\cdots+\alpha_{i}\},$$ 
for $1\leq i\leq k$. Suppose, by way of contradiction, that there exists a partition $\AA\in \Pi(\mu',\alpha)$ such that $\TT(\AA)$ contains a column in which two entries $a<b$ lie in the same block $A_r$ of $\AA_\alpha$. Since $A_r$ is a set of consecutive integers, all entries lying between $a$ and $b$ in the same column also lie in $A_r$. In particular, the entry $b'$ in the cell immediately above $b$ also lies in $A_r$. Suppose that the entry $b$ lies in the $i$th row and the $j$th column of $\TT(\AA)$. By the definition of $\TT$, there is a unique injective map $\theta$ from the $i$th row to the $(i-1)$th row of $\TT(\AA)$ defined by $\theta(x)=y$ if $y<x$ are consecutive elements in some block of $\AA$. Since $\AA\in \Pi(\mu',\alpha)$, it follows that $\theta(x)$ and $x$ lie in different blocks of $\AA_\alpha$. Therefore, if $x$ is in the $i$th row of $\TT(\AA)$ and $x\in A_s$ for some $s$, then the fact that $\theta(x)<x$ implies that $\theta(x)\in \cup_{i=1}^{s-1}A_i$. Since $b\in A_r$ and $b$ is in the $j$th column, it follows that the first $j$ elements in row $i$ are mapped under $\theta$ into $\cup_{i=1}^{r-1}A_i$. As $b'\in A_r$ as well, it follows that $\theta$ maps the first $j$ elements in row $i$ to elements in the $(i-1)$th row that are strictly to the left of $b'$, contradicting the injectivity of $\theta$. This proves that $\TT$ maps $\Pi(\mu',\alpha)$ into $\syt(\mu,\alpha)$.

  Surjectivity follows easily since, for every tableau $T\in \syt(\mu,\alpha),$ the set partition whose blocks are specified by the columns of $T$ maps to $T$ under $\TT$.
\end{proof}

\begin{proposition}
  $\SS_\alpha$ maps $\syt(\mu,\alpha)$ surjectively onto $\ssyt(\mu,\alpha)$.
\end{proposition}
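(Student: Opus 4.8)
The plan is to treat the two halves of the statement separately: first confirm that $\SS_\alpha$ really does land inside $\ssyt(\mu,\alpha)$, and then prove surjectivity by constructing an explicit preimage of each semistandard tableau through standardization. The single fact that makes both halves work is that $\SS_\alpha$ is a weakly order-preserving step function which is constant exactly on the blocks of $\AA_\alpha$: by definition $\SS_\alpha(x)=i$ precisely when $x\in A_i$, so $\SS_\alpha(a)=\SS_\alpha(b)$ if and only if $a$ and $b$ lie in the same block $A_i$ of $\AA_\alpha$.

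For well-definedness, fix $T\in\syt(\mu,\alpha)$. Since $T$ is standard, its entries are exactly $[n]$ with $n=|\mu|=|\alpha|$, and the value $i$ occurs in $\SS_\alpha(T)$ exactly $|A_i|=\alpha_i$ times, so the content of $\SS_\alpha(T)$ is $\alpha$. Monotonicity of $\SS_\alpha$ turns the strictly increasing rows of $T$ into weakly increasing rows. For the columns I would argue by contradiction: if two vertically adjacent entries $a<b$ of $T$ satisfied $\SS_\alpha(a)=\SS_\alpha(b)$, then $a$ and $b$ would lie in the same block of $\AA_\alpha$ while sitting in a common column, contradicting the defining condition of $\syt(\mu,\alpha)$. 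Hence columns stay strictly increasing and $\SS_\alpha(T)\in\ssyt(\mu,\alpha)$.

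For surjectivity, given $S\in\ssyt(\mu,\alpha)$ I would standardize it as follows: process the values $v=1,2,\ldots$ in increasing order and, for each $v$, relabel the cells of $S$ carrying $v$ — which form a horizontal strip with pairwise distinct columns, since $S$ is column-strict — by the consecutive integers of the block $A_v$, assigned from left to right. Writing $T$ for the result, I would check that $T$ is standard: columns stay strictly increasing because the entries in one column of $S$ are strictly increasing, hence distinct, and therefore receive labels from distinct and increasingly ordered blocks $A_v$; rows stay strictly increasing because equal entries in a row are relabelled left to right within a single block while unequal entries already sit in distinct blocks. Since the cells of $S$ equal to $v$ are relabelled by $A_v$ and $\SS_\alpha$ sends all of $A_v$ back to $v$, we get $\SS_\alpha(T)=S$. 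Finally $T\in\syt(\mu,\alpha)$, because each column of $S$ uses each value at most once and so the corresponding column of $T$ meets each block $A_v$ in at most one cell.

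The monotonicity and content bookkeeping are routine; the crux of the argument is the standardization step, where the point to get exactly right is that relabelling each value-$v$ horizontal strip by the block $A_v$ (left to right) simultaneously preserves standardness and forces the column condition defining $\syt(\mu,\alpha)$. Both of these hinge on $S$ being column-strict, which is precisely what guarantees that no column ever repeats a value and hence never meets a block of $\AA_\alpha$ twice.
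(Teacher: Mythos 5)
Your proposal is correct and follows essentially the same route as the paper: monotonicity of $\SS_\alpha$ plus the column condition defining $\syt(\mu,\alpha)$ for well-definedness, and the left-to-right relabelling of each value-$v$ horizontal strip by the block $A_v$ for surjectivity. The extra detail you supply in checking that the standardization yields a standard tableau satisfying the column condition is exactly what the paper leaves as ``easily seen.''
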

\begin{proof}
  Suppose $\alpha=(\alpha_1,\ldots,\alpha_k)$ and let $T\in \syt(\mu,\alpha)$. Then $\SS_\alpha(T)$ is obtained from $T$ by replacing each entry $x$ by $\SS_\alpha(x)$. Since $\SS_\alpha$ is weakly order preserving, it follows that the rows and columns of $\SS_\alpha(T)$ are weakly increasing. Since no two entries in the same column of $T$ lie in the same block of $\AA_\alpha$, it follows that the entries in each column of $\SS_\alpha(T)$ are distinct and, consequently, the columns of $\SS_\alpha(T)$ are strictly increasing. It follows that $S_\alpha(T)\in \ssyt(\mu,\alpha)$. To prove the surjectivity of $\SS_\alpha$, consider a tableau $\hat{T}\in \ssyt(\mu,\alpha)$. For each integer $1\leq i\leq k$, there are $\alpha_i$ entries equal to $i$ in $\hat{T}$, each of which appears in a different column. Scanning from left to right, replace these entries by the elements in the $i$th block of $\AA_\alpha$ in increasing order. It is easily seen that this process yields a tableau $T\in \syt(\mu,\alpha)$ such that $S_\alpha(T)=\hat{T}$.
\end{proof}
We wish to describe a method of obtaining set partition statistics from certain multiset permutation statistics by using the correspondence between set partitions and semistandard tableaux. We begin by introducing some notation and defining \emph{Mahonian} statistics. The $q$-analog of the integer $n$ is defined by $[n]_q:=1+q+\cdots+q^{n-1}$. Write $[n]_q!$ for the product $\prod_{i=1}^n[i]_q$. The $q$-multinomial coefficient ${n \brack \beta_1,\ldots,\beta_r}_q$ is defined by 
  \begin{align*}
    {n \brack \beta_1,\ldots,\beta_r}_q:=\frac{[n]_q!}{[\beta_1]_q![\beta_2]_q!\cdots [\beta_r]_q!},
  \end{align*}
which is a polynomial in $q$ with nonnegative integer coefficients. Given a composition $\beta=(\beta_1,\ldots,\beta_r)$ of $n$, let $R(\beta)$ denote the class of all words of length $n$ which are rearrangements of the word $1^{\beta_1}2^{\beta_2}\cdots r^{\beta_r}$ which contains $\beta_i$ copies of $i$ for $1\leq i\leq r$. One can view $R(\beta)$ as the set of all permutations of the multiset which contains $\beta_i$ copies of $i$ $(1\leq i\leq r)$. For our purposes it will be more convenient to think of the elements of $R(\beta)$ as words. A \emph{statistic} on a set is a nonnegative integer valued function defined on the set. 
\begin{definition}\label{def:mahonian}
A statistic $\varphi$ defined on $R(\beta)$ is said to be \emph{Mahonian} \cite[Eq.\ 6.2]{foata2011q} if 
\begin{align*}
 {n \brack \beta}_q:= {n \brack \beta_1,\ldots,\beta_r}_q=\sum_{w\in R(\beta)}q^{\varphi(w)}.
\end{align*}
  \end{definition}
  Some authors refer to the property in Definition \ref{def:mahonian} as \emph{multiset Mahonian}. Examples of Mahonian statistics include the inversion number ($\inv$), the number of noninversions (${\rm ninv}$), the Major index, the $z$-index of Bressoud and Zeilberger \cite[p.\ 204]{MR791661} and Denert's index \cite{MR1061147,han1994}. We refer to the lecture notes of Foata and Han \cite{foata2011q} for more on Mahonian statistics.

  \begin{definition}
An \emph{inversion} of a word $w=w_1w_2\cdots w_n$ is defined as a pair $1\leq i<j\leq n$ such that $w_i>w_j$. The number of inversions of $w$ is denoted $\inv(w)$.
  \end{definition}

  We will show that, to each Mahonian statistic $\varphi$, one can associate a statistic on set partitions which generates the polynomials $b_{\mu\alpha}(q)$ in the introduction.
  \begin{definition}
  Let $\alpha=(\alpha_1,\ldots,\alpha_k)$ be a composition of $n$. Given a Mahonian statistic $\varphi$ and a word $w$ in its domain, define 
  \begin{align*}
    \varphi_\alpha(w):=\sum_{i=1}^k \varphi(w_i),
  \end{align*}
  where the words $w_i$ are defined uniquely by requiring that $w$ is a concatenation $w=w_1w_2\cdots w_k$ and $w_i$ has length $\alpha_i$ for $1\leq i\leq k$. 
  To each set partition $\AA\in \Pi_n$, associate a word $w=w(\AA):=a_1\cdots a_n$ where $a_i=j$ if $i$ is the $j$-th smallest element in some block of $\AA$. Define
\begin{align*}
  \varphi_{\alpha}(\AA):=\varphi_\alpha(w(\AA)).
\end{align*}
  \end{definition}
\begin{example}
If $\AA=127|34|56$, then $w(\AA)=1212123$. Suppose $\varphi=\inv$ and $\alpha=(3,2,1,1)$. Then $\varphi_\alpha(\AA)=\inv(121)+\inv(21)+\inv(2)+\inv(3)=1+1+0+0=2.$  
\end{example}

Consider the set partitions 
\begin{align*}
\AA&=\begin{ytableau}
       1 & 2 & 3 & 5& 9 \\
       6  & \none & 4 & 7 &\none\\
       \none &\none&8&\none &\none
\end{ytableau}, \qquad
\BB=\begin{ytableau}
       1 & 2 & 3 & 5& 9 \\
       4  & \none & 6 & 7 &\none\\
       8 &\none&\none&\none &\none
\end{ytableau}.
\end{align*}
Note that $w(\AA)=111212231 =w(\BB)$ since each entry appears in the same position within blocks of $\AA$ and $\BB$. More generally, we have the following result.
\begin{proposition}\label{prop:onlysyt}
For $\AA,\BB\in \Pi_n$, $w(\AA)=w(\BB)$ if and only if $\TT(\AA)=\TT(\BB)$. In particular, if $\TT(\AA)=\TT(\BB)$, then for every composition $\alpha$ and every Mahonian statistic $\varphi$, we have $\varphi_\alpha(\AA)=\varphi_\alpha(\BB)$. 
\end{proposition}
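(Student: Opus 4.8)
The plan is to exhibit a single invariant that both $w(\AA)$ and $\TT(\AA)$ record, namely the row in which each element of $[n]$ comes to rest, and then read off the equivalence from it. I would first recall that in the array built from $\AA$ the blocks of $\AA$ are the columns and the $j$th row lists the $j$th smallest elements of the blocks; hence, by the very definition of $w(\AA)=a_1\cdots a_n$, the letter $a_i$ is precisely the index of the row of this array that contains $i$. The key observation I would then establish is that Step~1 in the construction of $\TT$ acts \emph{within} each row only: left-justifying and then sorting a row permutes its entries in increasing order but never transfers an entry to a different row. Consequently $i$ occupies row $a_i$ of $\TT(\AA)$, so the function sending $i$ to the index of its row in $\TT(\AA)$ coincides with the map $i\mapsto a_i$.

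Granting this, I would deduce both implications at once. If $w(\AA)=w(\BB)$, then for every $j$ the two set partitions place exactly the same elements $\{i:a_i=j\}$ in row $j$; since $\TT(\AA)$ and $\TT(\BB)$ are obtained by listing the elements of each row in increasing order, the two tableaux agree row by row, whence $\TT(\AA)=\TT(\BB)$. Conversely, if $\TT(\AA)=\TT(\BB)$, then the row containing a given $i$ is the same in both tableaux, and by the observation above this common row index equals $a_i$ for both $\AA$ and $\BB$; therefore $w(\AA)=w(\BB)$. In short, the assignment $i\mapsto a_i$ passes back and forth between $w(\AA)$ and $\TT(\AA)$ bijectively, which is exactly the asserted equivalence.

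The final sentence would then be immediate from the definitions: since $\varphi_\alpha(\AA):=\varphi_\alpha(w(\AA))$ depends on $\AA$ only through the word $w(\AA)$, the equality $\TT(\AA)=\TT(\BB)$ forces $w(\AA)=w(\BB)$ and hence $\varphi_\alpha(\AA)=\varphi_\alpha(\BB)$ for every composition $\alpha$ and every Mahonian statistic $\varphi$. I expect the only point requiring genuine care to be the key observation that Step~1 preserves row membership; everything else is bookkeeping. The (modest) obstacle is thus to articulate cleanly that left-justification and sorting are operations internal to a single row, so that the partition of $[n]$ into rows --- the datum carried by $w(\AA)$ --- is precisely what survives into $\TT(\AA)$.
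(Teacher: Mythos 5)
Your proposal is correct and follows essentially the same route as the paper: the paper's proof likewise rests on the single observation that an element is the $i$th smallest in its block of $\AA$ exactly when it sits in the $i$th row of $\TT(\AA)$, so that the word $w(\AA)$ and the tableau $\TT(\AA)$ each record precisely the partition of $[n]$ by row index. Your write-up just makes the bookkeeping (sortedness of rows, the definition of $\varphi_\alpha$) more explicit.
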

\begin{proof}
  An element is the $i$th smallest element in some block of $\AA$ if and only if it appears in the $i$th row of $\TT(\AA)$. Therefore, each element appears in the same position in blocks of $\AA$ and $\BB$ if and only if $\TT(\AA)=\TT(\BB)$. 
\end{proof}

In view of Proposition \ref{prop:onlysyt}, we can define $w(T)$ for any standard tableau $T$.
\begin{definition}\label{def:tabtoword}
To each standard tableau $T$, associate a word $w(T)=a_1a_2\cdots a_n$ where $a_i=j$ if the entry $i$ occurs in the $j$th row of $T$.  For each Mahonian statistic $\varphi$, define $$\varphi_\alpha(T)=\varphi_\alpha(w(T)).$$
\end{definition}
 It should be clear from the definitions above that, for each set partition $\AA$, we have $w(\AA)=w(\TT(\AA))$ and $\varphi_\alpha(\AA)=\varphi_\alpha(\TT(\AA))$. 
\begin{example}
  For the standard tableau
  \begin{align*}
T&=\begin{ytableau}
       1 & 2 & 3 & 5& 9 \\
       4  & 6 & 7 & \none &\none\\
       8 &\none&\none&\none &\none
\end{ytableau},
\end{align*}
we have $w(T)=111212231.$ If $\alpha=(3,3,3)$ and $\varphi=\inv$, then $\varphi_{\alpha}(T)=\inv(111)+\inv(212)+\inv(231)=0+1+2=3.$ 
\end{example}

\begin{definition}\label{def:bit}
Suppose $T$ is a semistandard tableau. For each $i\geq 1$, let $\beta^i_{j}=\beta^i_j(T)$ denote the number of occurrences of the integer $i$ in the $j$th row of $T$ for $j\geq 1$. Define $\beta^i=\beta^i(T):=(\beta^i_{1},\beta^i_{2},\ldots)$ for $i\geq 1$. 
\end{definition}

\begin{example}
  Let $\mu=(5,3,1)$ and $\alpha=(3,3,3)$. Consider the tableau $T\in \ssyt(\mu,\alpha)$ given by
  \begin{align*}
T=\begin{ytableau}
       1 & 1 & 1 & 2& 3 \\
       2 & 2 & 3 & \none &\none\\
       3 &\none&\none&\none &\none
\end{ytableau}.
  \end{align*}
  In this case $\beta^1=(3),\beta^2=(1,2)$ and $\beta^3=(1,1,1).$ Consider the map $\SS_\alpha:\syt(\mu,\alpha)\to \ssyt(\mu,\alpha)$. The fiber of $\SS_\alpha$ over $T$ contains the tableau
  \begin{align*}
\hat{T}=\begin{ytableau}
       1 & 2 & 3 & 5 & 8 \\
       4 & 6 & 7 & \none &\none\\
       9 &\none&\none&\none &\none
     \end{ytableau}.
  \end{align*}
  In this case $w(\hat{T})$ is a concatenation $w_1w_2w_3$ where $w_1,w_2,w_3$ are given by $111, 212, 213 $ respectively. Note that the length of $w_i$ is $\alpha_i$ and $w_i\in R(\beta^i)$ for $1\leq i\leq 3$.
\end{example}
  In general, given compositions $\beta^1,\ldots,\beta^k$ and a Mahonian statistic $\varphi$, assign a weight to tuples $(w_1,\ldots,w_k)$ in the Cartesian product  $R(\beta^1)\times \cdots\times R(\beta^k)$ by $\varphi((w_1,\ldots,w_k))=\sum_{i=1}^k\varphi(w_i).$ Given a word $w$ of length $n$, define $g_\alpha(w)=(w_1,\ldots,w_k)$ where the $w_i$ are determined uniquely by requiring that $w_i$ has length $\alpha_i$ and $w=w_1w_2\cdots w_k$. The following proposition characterizes the fibers $\SS_\alpha^{-1}(T)$.

\begin{proposition}\label{prop:bijection}
Let $T\in \ssyt(\mu,\alpha)$ and suppose $\alpha$ has length $k$. Let $\beta^i=\beta^i(T)$ $(1\leq i\leq k)$ be as above. For each Mahonian statistic $\varphi$, the map $\hat{T}\mapsto g_\alpha(w(\hat{T}))$ is a weight preserving bijection between the fiber $\SS_\alpha^{-1}(T)$ weighted by $\varphi_\alpha$ and the Cartesian product $R(\beta^1)\times \cdots \times R(\beta^k)$ weighted by $\varphi$. 
\end{proposition}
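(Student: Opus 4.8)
The plan is to prove the statement by exhibiting an explicit inverse, handling well-definedness and weight-preservation first and then the two directions of bijectivity. For the forward map, I would begin by observing that if $\hat{T}\in\SS_\alpha^{-1}(T)$, then each entry $i$ of $T$ is replaced in $\hat{T}$ by an element of the block $A_i$ of $\AA_\alpha$ (there are $\alpha_i=|A_i|$ occurrences of $i$ in $T$). Consequently the positions occupied by the elements of $A_i$ in the word $w(\hat{T})$ form exactly the $i$th chunk of length $\alpha_i$, so $g_\alpha(w(\hat{T}))=(w_1,\ldots,w_k)$, where $w_i$ records the rows into which the elements of $A_i$ fall. Since $i$ appears in row $j$ of $T$ precisely $\beta^i_j$ times, the word $w_i$ contains $\beta^i_j$ copies of $j$, i.e.\ $w_i\in R(\beta^i)$; hence the map lands in $R(\beta^1)\times\cdots\times R(\beta^k)$. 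Weight-preservation is then immediate from the definitions, as $\varphi_\alpha(\hat{T})=\varphi_\alpha(w(\hat{T}))=\sum_{i}\varphi(w_i)$ is exactly the assigned weight of $(w_1,\ldots,w_k)$.

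For injectivity I would use that a standard tableau of fixed shape $\mu$ is completely determined by the assignment of its entries to rows, since the entries of each row must be written in increasing order. Thus $w(\hat{T})$ determines $\hat{T}$, and since $g_\alpha$ is just the (invertible) operation of splitting a word into consecutive blocks of lengths $\alpha_1,\ldots,\alpha_k$, the composite $\hat{T}\mapsto g_\alpha(w(\hat{T}))$ is injective on the fiber.

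The surjectivity is where the real argument lies. Given an arbitrary tuple $(w_1,\ldots,w_k)$ in the product, I would construct a filling $\hat{T}$ by placing, for each $i$, the $\ell$th smallest element of $A_i$ into the row prescribed by the $\ell$th letter of $w_i$, and then arranging the entries of each row in increasing order. The shape is correct because the number of entries placed in row $j$ is $\sum_i\beta^i_j=\mu_j$, the length of row $j$ of $T$. Moreover $\SS_\alpha(\hat{T})=T$, since $\max A_i<\min A_{i+1}$ forces each sorted row of $\hat{T}$ to list its $A_1$-entries, then its $A_2$-entries, and so on, which collapses under $\SS_\alpha$ to $1^{\beta^1_j}2^{\beta^2_j}\cdots$, matching row $j$ of $T$; the same block-ordering shows that distinct columns meet each block of $\AA_\alpha$ at most once, so $\hat{T}\in\syt(\mu,\alpha)$.

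The anticipated main obstacle is verifying column-strictness of $\hat{T}$ for every tuple, because the $w_i$ are chosen independently and could appear to conflict. I expect this difficulty to dissolve once the block structure is exploited: for vertically adjacent cells $(j,c)$ and $(j+1,c)$, the semistandardness of $T$ gives $T(j,c)=i<i'=T(j+1,c)$, and since every element of $A_i$ is strictly smaller than every element of $A_{i'}$ we get $\hat{T}(j,c)<\hat{T}(j+1,c)$ regardless of the choices made. Hence $\hat{T}$ is a genuine standard tableau lying in $\SS_\alpha^{-1}(T)$. Finally I would note that this reconstruction is inverse to $\hat{T}\mapsto g_\alpha(w(\hat{T}))$, since placing the $\ell$th smallest element of $A_i$ into row $(w_i)_\ell$ exactly reproduces the row-assignment encoded by $w(\hat{T})$, which completes the bijection.
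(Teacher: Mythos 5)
Your proof is correct and follows essentially the same route as the paper: both arguments reduce injectivity to the fact that a standard tableau is determined by the assignment of its entries to rows, and both realize the fiber by distributing the elements of each block $A_i$ of $\AA_\alpha$ among the rows of $T$ and sorting. The only difference is cosmetic: where the paper establishes surjectivity by counting the constructed tableaux (there are $\prod_{i}{\alpha_i \choose \beta^i}$ of them, matching the cardinality of the codomain) and invoking injectivity, you exhibit the same construction directly as a two-sided inverse, and you are somewhat more explicit than the paper in verifying column-strictness of the reconstructed tableau.
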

\begin{proof}
  It follows from the definition of $\varphi_\alpha$ that the map $\hat{T}\mapsto g_\alpha(w(\hat{T}))$ is weight preserving. Moreover, it easily seen that if $\hat{T}\in \SS_\alpha^{-1}(T)$, then $g_\alpha(w(\hat{T}))\in R(\beta^1)\times \cdots \times R(\beta^k).$ It remains to prove that the map $\hat{T}\mapsto g_\alpha(w(\hat{T}))$ is a bijection from $\SS_\alpha^{-1}(T)$ to $\prod_{i=1}^kR(\beta^i).$
First note that this map is injective since the map $\hat{T}\mapsto w(\hat{T})$ is injective. Since the cardinality of $\prod_{i=1}^kR(\beta^i)$ is given by the product
  \begin{align*}
  N= \prod_{i=1}^k{\alpha_i \choose \beta^i}
  \end{align*}
of multinomial coefficients, surjectivity would follow if we prove that there are $N$ distinct elements in the fiber $\SS_\alpha^{-1}(T)$. Consider the canonical partition $\AA_\alpha=\cup_{i=1}^kA_i$ where $|A_i|=\alpha_i$ $(1\leq i\leq k)$. Each element in the fiber $\SS_\alpha^{-1}(T)$ can be obtained from $T$ by replacing all entries equal to $i$ in $T$ by a suitable permutation of elements in $A_i$ for each $1\leq i\leq k$. Successively for $i=1,2,\ldots,k$, write $A_i=B_{i1}\cup B_{i2}\cup\cdots\cup B_{ii}$ as an ordered union of $i$ disjoint sets (possibly empty) with $|B_{ij}|=\beta^i_{j}$ and replace the $\beta^i_{j}$ entries equal to $i$ in row $j$ of $T$ by the entries of $B_{ij}$ in ascending order. It is easily seen that this procedure always results in a standard tableau ${\tilde T}\in \SS_\alpha^{-1}(T).$ Since the number of ways to write the $A_i$ as an ordered union of disjoint sets above is the multinomial coefficient ${\alpha_i \choose \beta^i}$ for each $1\leq i\leq k$, it follows that there exist $N$ standard tableaux in $\SS_\alpha^{-1}(T)$, proving surjectivity.
\end{proof}

\begin{example}
    Let $\mu=(5,3,1)$ and $\alpha=(3,3,3)$. Consider the tableau $T\in \ssyt(\mu,\alpha)$ defined by
  \begin{align*}
T=\begin{ytableau}
       1 & 1 & 1 & 2& 3 \\
       2 & 2 & 3 & \none &\none\\
       3 &\none&\none&\none &\none
\end{ytableau}. 
  \end{align*}
To enumerate tableaux in $\SS_\alpha^{-1}(T)$ we compute $\beta^1=(3),\beta^2=(1,2)$ and $\beta^3=(1,1,1).$ Here $\AA_\alpha=123|456|789=A_1\cup A_2\cup A_3$. As $\beta^1=(3)$, there is a unique way to write $A_1$ as a union of a single set of size 3. This corresponds to replacing the entries equal to 1 in $T$ by the elements in $A_1$:
  \begin{align*}
    \begin{ytableau}
       1 & 2 & 3 &  &  \\
        &  &  & \none & \none \\
        &\none & \none & \none & \none
\end{ytableau}.
  \end{align*}
  Since $\beta^2=(1,2)$, we may write $A_2=\{4,5,6\}$ as a union in 3 ways, namely $\{4\}\cup\{5,6\}$ and $\{5\}\cup \{4,6\}$ and $\{6\}\cup \{4,5\}.$ These correspond respectively to the following replacements of entries equal to 2 in $T$: 
  \begin{align*}
    \begin{ytableau}
       1 & 2 & 3 &4  &  \\
       5 & 6 &  & \none & \none \\
        &\none & \none & \none & \none
      \end{ytableau},\quad
    \begin{ytableau}
       1 & 2 & 3 &5  &  \\
       4 & 6 &  & \none & \none \\
        &\none & \none & \none & \none
\end{ytableau} \mbox{ and }
    \begin{ytableau}
       1 & 2 & 3 & 6 &  \\
        4&5   &  & \none & \none \\
        &\none & \none & \none & \none
\end{ytableau}.
  \end{align*}
In each of the above tableaux, the empty cells can be filled in ${\alpha_3\choose \beta^3}={3 \choose 1,1,1}=6$ ways to obtain an element in $\SS_\alpha^{-1}(T)$. It follows that $|\SS_\alpha^{-1}(T)|=3\cdot 6=18,$ which is equal to the product $\prod_{i=1}^3{\alpha_i \choose \beta^i}.$
\end{example}
We now define a $q$-deformation of the product $\prod_{i=1}^k{\alpha_i \choose \beta^i}$ which will be used to define the polynomials $b_{\mu\alpha}(q)$ in Section \ref{sec:setstat}. 
\begin{definition}\label{def:sqt}
Given $T\in \ssyt(\mu,\alpha)$, let $\beta^i=\beta^i(T)$ and define
\begin{align*}
  s_q(T):=\prod_{i\geq 1}{\alpha_i \brack \beta^i}_q.
\end{align*}
\end{definition}
\begin{lemma}\label{lem:sqt}
  For each Mahonian statistic $\varphi$ and each tableau $T\in \ssyt(\mu,\alpha)$, we have
  \begin{align*}
  \sum_{\substack{\hat{T}\in \syt(\mu,\alpha)\\ \SS_\alpha(\hat{T})=T}}q^{\varphi_\alpha(\hat{T})}=s_q(T).
  \end{align*}
\end{lemma}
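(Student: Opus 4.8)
The plan is to reduce the claimed identity directly to the Mahonian property of $\varphi$ by means of the bijection already established in Proposition \ref{prop:bijection}. The left-hand side is precisely the generating function of the fiber $\SS_\alpha^{-1}(T)$ with respect to the statistic $\varphi_\alpha$. By Proposition \ref{prop:bijection}, the map $\hat{T}\mapsto g_\alpha(w(\hat{T}))$ is a weight-preserving bijection from this fiber onto the Cartesian product $R(\beta^1)\times\cdots\times R(\beta^k)$, where the weight of a tuple $(w_1,\ldots,w_k)$ is $\sum_{i=1}^k\varphi(w_i)$. Hence the first step is simply to rewrite
\begin{align*}
\sum_{\substack{\hat{T}\in\syt(\mu,\alpha)\\ \SS_\alpha(\hat{T})=T}}q^{\varphi_\alpha(\hat{T})}
=\sum_{(w_1,\ldots,w_k)\in R(\beta^1)\times\cdots\times R(\beta^k)}q^{\sum_{i=1}^k\varphi(w_i)}.
\end{align*}

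Next I would exploit the additivity of the exponent over the factors of the Cartesian product to split the right-hand sum into a product of independent sums,
\begin{align*}
\sum_{(w_1,\ldots,w_k)}q^{\sum_{i=1}^k\varphi(w_i)}
=\prod_{i=1}^k\Bigl(\sum_{w_i\in R(\beta^i)}q^{\varphi(w_i)}\Bigr).
\end{align*}
At this point the Mahonian hypothesis (Definition \ref{def:mahonian}) applies to each factor: since $\beta^i$ is a composition of $\alpha_i=|\beta^i|$, the class $R(\beta^i)$ consists of all rearrangements of the word $1^{\beta^i_1}2^{\beta^i_2}\cdots$, and therefore $\sum_{w_i\in R(\beta^i)}q^{\varphi(w_i)}={\alpha_i\brack\beta^i}_q$. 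Multiplying these together gives $\prod_{i=1}^k{\alpha_i\brack\beta^i}_q$, which is exactly $s_q(T)$ by Definition \ref{def:sqt}, completing the argument.

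The technical content of this lemma is essentially packaged inside Proposition \ref{prop:bijection}, so I do not expect a genuine obstacle here; the only points requiring care are bookkeeping ones. First, one must check that $|\beta^i|=\alpha_i$, so that ${\alpha_i\brack\beta^i}_q$ really is the $q$-multinomial generating $R(\beta^i)$; this holds because the $\alpha_i$ occurrences of the symbol $i$ in $T$ are distributed among the rows according to $\beta^i$. Second, the product $\prod_{i\geq 1}$ in the definition of $s_q(T)$ and the finite product $\prod_{i=1}^k$ above agree because $\alpha$ has length $k$, so $\beta^i$ is empty (and its factor is $1$) for $i>k$. The feature that makes everything collapse is precisely that $\varphi_\alpha$ is defined additively across the blocks of $\AA_\alpha$, which is exactly what permits the single Mahonian identity to be applied blockwise and the sum over the product set to factor.
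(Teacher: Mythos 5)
Your proof is correct and follows exactly the paper's own argument: invoke the weight-preserving bijection of Proposition \ref{prop:bijection}, factor the sum over the Cartesian product into a product of sums, and apply the Mahonian property of $\varphi$ to each factor to obtain $\prod_{i}{\alpha_i\brack\beta^i}_q=s_q(T)$. The additional bookkeeping remarks (that $|\beta^i|=\alpha_i$ and that the infinite product in Definition \ref{def:sqt} truncates at $k$) are sound and only make explicit what the paper leaves implicit.
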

\begin{proof}
Suppose $\alpha=(\alpha_1,\ldots,\alpha_k)$. By Proposition \ref{prop:bijection},  we have
  \begin{align*}
    \sum_{\substack{\hat{T}\in \syt(\mu,\alpha)\\ \SS_\alpha(\hat{T})=T}}q^{\varphi_\alpha(\hat{T})}&=\sum_{(w_1,\ldots,w_k)\in \prod_{i\geq 1}R(\beta^i)} q^{\varphi(w_1)+\cdots+\varphi(w_k)}\\
                                                                                    &=\prod_{i\geq 1}\sum_{w_i \in R(\beta^i)}q^{\varphi(w_i)}\\
                                                                                    &=\prod_{i\geq 1}{\alpha_i \brack \beta^i}_q\\
    &=s_q(T). \qedhere
  \end{align*}
\end{proof}   
\section{Set partitions statistics arising from Mahonian statistics}\label{sec:setstat}  
In this section we show that to each Mahonian statistic on multiset permutations, one can associate a statistic on set partitions. We require the notion of \emph{interlacing number} of a set partition from \cite[Sec.\ 3.3]{pr} which we now describe. Given a set partition $\AA\in \Pi_n$, we form its extended chord diagram as follows. First plot the points $1,\ldots,n$ on a number line with an additional point to the right, denoted $\infty$, which is deemed to be greater than all positive integers. To each block $B=\{b_1,\ldots,b_r\}$ of $\AA$ of cardinality $r$, we associate $r$ arcs, $(b_1,b_2), (b_2,b_3),\ldots,(b_r,\infty).$ For $1\leq j\leq r$, the arc $(b_j,b_{j+1})$ is referred to as the $j$-th arc of $B$ (with the convention $b_{r+1}=\infty$). For instance, when $\AA=127|36|45,$ its extended chord diagram is shown in Figure \ref{fig:extdiag}. A pair of arcs $\{(a_1,a_2),(b_1,b_2)\}$ is said to be \emph{crossing} if either $a_1<b_1<a_2<b_2$ or $b_1<a_1<b_2<a_2$ (Kreweras \cite{MR0309747}). An \emph{interlacing} of $\AA$ is a crossing of $j$-th arcs of distinct blocks of $\AA$ for some $j$. For instance, the extended chord diagram in Figure~\ref{fig:extdiag} has precisely 2 interlacings corresponding to the arc pairs $\{(2,7),(5,\infty)\}$ and $\{(2,7),(6,\infty)\}$.  
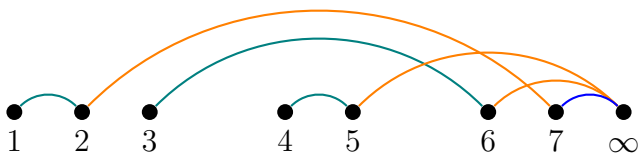
\begin{figure}[ht]
\begin{tikzpicture}
                        [scale=0.9,every node/.style={circle,fill=black,,inner sep=2pt, minimum size=6pt}]
                        \node[label=below:$1$] (1) at (1,0) {};
                        \node[label=below:$2$] (2) at (2,0) {};
                        \node[label=below:$3$] (3) at (3,0) {};
                        \node[label=below:$4$] (5) at (5,0) {};
                        \node[label=below:$5$] (6) at (6,0) {};
                        \node[label=below:$6$] (8) at (8,0) {};
                        \node[label=below:$7$] (9) at (9,0) {};
                        \node[label=below:$\infty$] (0) at (10,0) {};
                        \draw[thick,color=teal]
                        (1) [out=45, in=135] to  (2);
                        \draw[thick,color=orange]
                        (2) [out=45, in=135] to  (9);
                        \draw[thick,color=teal]
                        (3) [out=45, in=135] to  (8);
                        \draw[thick,color=teal]
                        (5) [out=45, in=135] to  (6);
                        \draw[thick,color=orange]
                        (6) [out=45, in=135] to  (0);
                        \draw[thick,color=orange]
                        (8) [out=45, in=135] to  (0);
                        \draw[thick,color=blue]
                        (9) [out=45, in=135] to  (0);
\end{tikzpicture}
                      \caption{Extended chord diagram of the partition $127|36|45$. The $j$th arcs in each block are colored identically for $j\geq 1$.}
                      \label{fig:extdiag}
\end{figure}  
\begin{definition}
Let $\alpha$ be a composition of $n$ and $\AA\in \Pi_n$. Two interlacing arcs $(a,b)$ and $(c,d)$ of $\AA$ with $a<c<b<d$ are said to be $\alpha$-interlacing if $b$ and $c$ do not belong to the same block of $\mathcal{A}_\alpha$.
\end{definition}
 For instance, the arcs $(2,7)$ and $(6,\infty)$ in the extended chord diagram of Figure \ref{fig:extdiag} are $\alpha$-interlacing when $\alpha=(3,2,1,1)$, but not when $\alpha=(2,3,2)$. Let $i_\alpha(\AA)$ denote the number of $\alpha$-interlacings of $\AA$. Values of $i_\alpha(\AA)$ for some set partitions are shown in Table \ref{tab:statistics}.
\begin{remark}\label{rem:crossings}
  For $\alpha=(1^n),$ $\alpha$-interlacings are simply interlacings. In addition, if all blocks of $\AA$ are of cardinality 2, then interlacings of $\AA$  are precisely the crossings of $\AA$. 
\end{remark}
 We will show that the sum $\sum q^{i_\alpha(\AA)}$ taken over all set partitions $\AA\in \Pi(\mu',\alpha)$ which map to a fixed tableau under $\TT$ can be expressed compactly as a product of suitably defined $q$-integers. An example will serve to illustrate the general case.  

\begin{example}
  Suppose $\mu=(6,2)$ and $\alpha=(4,4)$ and consider the tableau $T\in \syt(\mu,\alpha)$ given by
  \begin{align*}
    T=
    \begin{ytableau}
      1&2&3&4&5&7\\
      6&8&\none
    \end{ytableau}.
  \end{align*}
  The canonical set partition corresponding to $\alpha$ is $\AA_\alpha=1234|5678=A_1\cup A_2.$ We wish to compute the sum
  \begin{align*}
 \sum_{\substack{\AA\in \Pi(\mu',\alpha)\\ \TT(\AA)=T}}q^{i_\alpha(\AA)}.
  \end{align*}
  Let $R_1,R_2$ denote the set of entries in the first and second rows of $T$ respectively. Any set partition $\AA\in \Pi(\mu',\alpha)$ with $\TT(\AA)=T$ is uniquely specified by a map $\theta:R_2\to R_1$ with the property that if $x\in A_j$, then $\theta(x)\in A_i$ for some $i<j$. Thus $\theta(6),\theta(8)\in \{1,2,3,4\}$. The choices $\theta(6)=4,3,2,1$ contribute $0,1,2,3$ $\alpha$-interlacings to $i_\alpha(\AA)$ respectively (the arc passing through 5 does not contribute to interlacings since 5 and 6 lie in the same block of $\AA_\alpha$). For instance, if $\theta(6)=2$, then there are two $\alpha$-interlacings formed by arcs which join $3$ and $4$ to either 8 or $\infty$. Once $\theta(6)$ has been chosen, there are 3 remaining choices for $\theta(8)$. For instance, if $\theta(6)=3$, then $\theta(8) \in \{1,2 , 4\}$ as shown in Figure \ref{fig:choices}. The choices $\theta(8)=4,2,1$ result in a contribution of $0,1,2$ to $i_\alpha(\AA)$ respectively. The extended chord diagram for $\theta(6)=3,\theta(8)=1$ is shown in Figure \ref{fig:choices1}.
    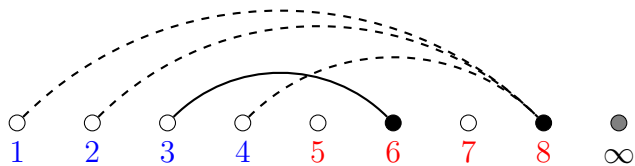
\begin{figure}[h]
    \begin{displaymath}
      \begin{tikzpicture}
        [scale=1,every node/.style={circle,draw,,inner sep=2pt, minimum size=6pt}]
        \node[label=below:$\textcolor{blue}{1}$] (1) at (1,0) {}; 
        \node[label=below:$\textcolor{blue}{2}$] (2) at (2,0) {};
        \node[label=below:$\textcolor{blue}{3}$] (3) at (3,0) {};
        \node[label=below:$\textcolor{blue}{4}$] (4) at (4,0) {};
        \node[label=below:$\textcolor{red}{5}$] (5) at (5,0) {};
        \node[label=below:$\textcolor{red}{6}$, fill=black] (6) at (6,0) {};
        \node[label=below:$\textcolor{red}{7}$] (7) at (7,0) {};
        \node[label=below:$\textcolor{red}{8}$, fill=black] (8) at (8,0) {};
        \node[label=below:$\infty$, fill=gray] (0) at (9,0) {};
        \draw[thick,color=black]
        (3) [out=45, in=135] to  (6);
        \draw[thick,color=black, style=dashed]
        (1) [out=45, in=135] to  (8);
              \draw[thick,color=black, style=dashed]
              (2) [out=45, in=135] to  (8);
              \draw[thick,color=black, style=dashed]
              (4) [out=45, in=135] to  (8);
      \end{tikzpicture}
    \end{displaymath}
    \caption{Choices for $\theta(8)$ when $\theta(6)=3$. Numbers of the same color are in the same block of $\AA_\alpha$. White and black nodes correspond to elements in the first and second row of the tableau respectively.}   
    \label{fig:choices}
  \end{figure}
 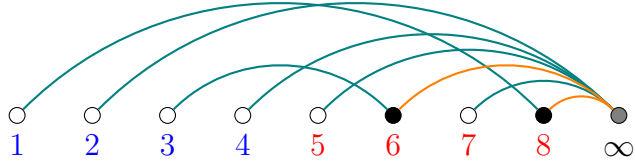
\begin{figure}[h]
    \begin{displaymath}
      \begin{tikzpicture}
        [scale=1,every node/.style={circle,draw,,inner sep=2pt, minimum size=6pt}]
        \node[label=below:$\textcolor{blue}{1}$] (1) at (1,0) {};
        \node[label=below:$\textcolor{blue}{2}$] (2) at (2,0) {};
        \node[label=below:$\textcolor{blue}{3}$] (3) at (3,0) {};
        \node[label=below:$\textcolor{blue}{4}$] (4) at (4,0) {};
        \node[label=below:$\textcolor{red}{5}$] (5) at (5,0) {};
        \node[label=below:$\textcolor{red}{6}$, fill=black] (6) at (6,0) {};
        \node[label=below:$\textcolor{red}{7}$] (7) at (7,0) {};
        \node[label=below:$\textcolor{red}{8}$, fill=black] (8) at (8,0) {};
        \node[label=below:$\infty$, fill=gray] (0) at (9,0) {};
        \draw[thick,color=teal]
        (3) [out=45, in=135] to  (6);
        \draw[thick,color=teal]
        (1) [out=45, in=135] to  (8);
        \draw[thick,color=teal]
        (2) [out=45, in=135] to  (0);
        \draw[thick,color=teal]
        (4) [out=45, in=135] to  (0);
        \draw[thick,color=teal]
        (5) [out=45, in=135] to  (0);
        \draw[thick,color=teal]
        (7) [out=45, in=135] to  (0);
        \draw[thick,color=orange]
        (6) [out=45, in=135] to  (0);
        \draw[thick,color=orange]
        (8) [out=45, in=135] to  (0);
      \end{tikzpicture}
    \end{displaymath}
    \caption{Extended chord diagram for $\AA=18|2|36|4|5|7$. First arcs and second arcs are colored differently. When $\alpha=(4,4)$, $\alpha$-interlacings are given by $\{(3,6),(4,\infty)\}$, $\{(1,8),(2,\infty)\}$ and $\{(1,8),(4,\infty)\}.$ Thus $i_\alpha(\AA)=3$.}    
    \label{fig:choices1}
  \end{figure}

  Therefore,
    \begin{align*}
 \sum_{\substack{\AA\in \Pi(\mu',\alpha)\\ \TT(\AA)=T}}q^{i_\alpha(\AA)}=(1+q+q^2+q^3)(1+q+q^2)=[4]_q[3]_q.
  \end{align*}
\end{example}
\begin{definition}\label{def:rqt}
Let $b_{ij}$ denote the entry in row $i$ and column $j$ of a tableau $T$. For each $i\geq 2$, the cell $(i,j)$ is assigned a value given by
\begin{align*}
  r_{ij}(T):=\#\{j':j'\geq j\mbox{ and }b_{i-1,j'}<b_{ij}\}.
\end{align*}
Define
\begin{align*}
  r_q(T):=\prod_{i\geq 2}\prod_{j\geq 1}[r_{ij}(T)]_q.
\end{align*}  
\end{definition}

\begin{example}\label{eg:atab}
  For
  \begin{align*}
   T=\ytableaushort{111223,223\none,34 \none},    
  \end{align*}
  the values of $r_{ij}(T)$ for each corresponding cell are indicated below:
\begin{align*}
   \ytableaushort{\ \ \ \ \ \ ,323\none,22 \none}.
\end{align*}
In this case $r_q(T)=(1+q)^3(1+q+q^2)^2.$ 
  \end{example} 

\begin{proposition}\label{prop:interlace}
  Suppose $\hat{T}\in \syt(\mu,\alpha)$ and $\SS_\alpha(\hat{T})=T$. Then
  \begin{align*}
   \sum_{\substack{\AA\in \Pi(\mu',\alpha)\\ \TT(\AA)=\hat{T}}}q^{i_\alpha(\AA)}=r_q(T). 
  \end{align*}
\end{proposition}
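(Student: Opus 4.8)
The plan is to fix the standard tableau $\hat T$, reduce the sum to one pair of adjacent rows at a time, and then count $\alpha$-interlacings by inserting the arcs that end in a given row from left to right.

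\emph{Reduction to a single transition.} First I would make explicit the bijection implicit in the proof that $\TT$ maps $\Pi(\mu',\alpha)$ onto $\syt(\mu,\alpha)$: a set partition $\AA\in\Pi(\mu',\alpha)$ with $\TT(\AA)=\hat T$ is the same datum as a choice, for each $i\ge 2$, of an injective map $\theta_i$ from the entries of row $i$ of $\hat T$ to those of row $i-1$ satisfying $\theta_i(v)<v$ and $\theta_i(v),v$ in different blocks of $\AA_\alpha$; since the blocks of $\AA_\alpha$ are intervals, these two conditions are equivalent to minimal intersection of the resulting chains with $\AA_\alpha$. The blocks of $\AA$ are exactly these chains, so the $(i-1)$-th arcs of $\AA$ (those joining row $i-1$ to row $i$ or to $\infty$) are governed solely by $\theta_i$. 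Consequently $i_\alpha(\AA)=\sum_{i\ge 2}I_{i-1}(\theta_i)$, where $I_{i-1}$ counts $\alpha$-interlacings among $(i-1)$-th arcs, and the sum over $\AA$ factors as $\prod_{i\ge 2}\sum_{\theta_i}q^{I_{i-1}(\theta_i)}$. As $r_q(T)=\prod_{i\ge2}\prod_j[r_{ij}(T)]_q$, it suffices to prove for each $i$ that $\sum_{\theta_i}q^{I_{i-1}(\theta_i)}=\prod_j[r_{ij}(T)]_q$.

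\emph{Counting within one transition.} Fix $i$ and write row $i-1$ of $\hat T$ as $u_1<\cdots<u_s$ and row $i$ as $v_1<\cdots<v_t$. The admissible partners of $v_c$ form the set $P(v_c)=\{u_p:\SS_\alpha(u_p)<\SS_\alpha(v_c)\}$, and I would record the nesting $P(v_1)\subseteq\cdots\subseteq P(v_t)$ together with the identity $|P(v_c)|=(c-1)+r_{ic}(T)$; the latter holds because column-strictness of $T=\SS_\alpha(\hat T)$ forces $u_1,\dots,u_{c-1}\in P(v_c)$, while the admissible partners in columns $\ge c$ are precisely those counted by $r_{ic}(T)$. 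Next I would observe that in any crossing $u_p<u_{p'}<z_p<z_{p'}$ of two $(i-1)$-th arcs the inner right endpoint $z_p$ is forced to be finite, hence equals some $v_c$; I attribute the interlacing to this $v_c$, so that $I_{i-1}=\sum_c(\text{interlacings with inner endpoint }v_c)$.

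\emph{The left-to-right sweep and factorization.} Processing $v_1,\dots,v_t$ in increasing order, the interlacings attributed to $v_c$ depend only on $\theta_i(v_1),\dots,\theta_i(v_c)$: the crossing condition $u_{p'}<v_c$ together with the $\alpha$-condition (different $\AA_\alpha$-block) collapses to ``$u_{p'}$ is an admissible partner of $v_c$'', while $z_{p'}>v_c$ collapses to ``$u_{p'}$ has not yet been matched to any of $v_1,\dots,v_{c-1}$''. Hence, by the nesting and the counting identity above, exactly $r_{ic}(T)$ admissible partners remain available when $v_c$ is processed, and choosing the $m$-th smallest of them creates precisely $r_{ic}(T)-m$ new interlacings. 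Summing over the choice of $\theta_i(v_c)$ therefore contributes $1+q+\cdots+q^{r_{ic}(T)-1}=[r_{ic}(T)]_q$, and this factor is independent of the earlier choices; peeling off $v_t,v_{t-1},\dots,v_1$ gives $\sum_{\theta_i}q^{I_{i-1}(\theta_i)}=\prod_c[r_{ic}(T)]_q$, which completes the argument.

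I expect the delicate part to be the bookkeeping in the last step: verifying that attributing each interlacing to its finite inner right endpoint counts every $\alpha$-interlacing exactly once, that the two geometric conditions genuinely collapse to ``available admissible partner larger than the one chosen'', and that the resulting count $r_{ic}(T)-m$ is truly independent of how $v_1,\dots,v_{c-1}$ were matched, so that the generating function factors cleanly into $q$-integers.
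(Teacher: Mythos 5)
Your proof is correct and takes essentially the same route as the paper's: encode $\AA$ with $\TT(\AA)=\hat T$ by the row-to-row matchings $\theta_i$, sweep each row left to right, and observe that the $c$-th choice ranges over exactly $r_{ic}(T)$ still-available admissible partners, contributing a factor $[r_{ic}(T)]_q$. The paper leaves the interlacing bookkeeping implicit (it only illustrates it in the preceding example), so your explicit attribution of each $\alpha$-interlacing to its finite inner right endpoint is a welcome, more complete version of the same argument.
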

\begin{proof}
Let $F(\hat{T})$ denote the sum in the statement of the proposition. Suppose the canonical set partition corresponding to $\alpha$ is $\AA_\alpha=\cup_{i=1}^kA_i.$ Let $R_i$ denote the entries in the $i$th row of $\hat{T}$. Each set partition $\AA\in \Pi(\mu',\alpha)$ with $\TT(\AA)=\hat{T}$ is uniquely specified by maps $\theta_i:R_{i}\to R_{i-1}$ $(i\geq 2)$ with the property that if $x\in R_p$ and $x\in A_s$, then $\theta_p(x)\in A_r$ for some $r<s$. Suppose $\hat{T}=(a_{ij})$ where $a_{ij}$ denotes the $j$th entry in the $i$th row of $\hat{T}$. Then $T=(b_{ij})$ where $b_{ij}$ is the unique integer for which $a_{ij}\in A_{b_{ij}}$. The number of choices for $\theta_2(a_{21})$ is equal to  
  \begin{align*}
   \# \{j':b_{1j'}<b_{21}\}=r_{21}(T).
  \end{align*}
  Thus the contribution of these choices to the sum $F(\hat{T})$ is given by
  \begin{align*}
    1+q+\cdots+q^{r_{21}(T)-1}=[r_{21}(T)]_q.
  \end{align*}
  Once $\theta_2(a_{21})$ has been chosen, the number of choices for $\theta_2(a_{22})$ is equal to
  \begin{align*}
    &\# \{j':b_{1j'}<b_{22}\}-1\\
    &=\# \{j':j'\geq 2 \mbox{ and } b_{1j'}<b_{22}\}=r_{22}(T).
  \end{align*}
  These choices contribute $[r_{22}(T)]_q$ to $F(\hat{T})$. In general, once $\theta_2$ has been defined for $a_{21},a_{22},\ldots,a_{2,j-1}$, the number of choices for $\theta_2(a_{2j})$ is equal to
    \begin{align*}
    &\# \{j':b_{1j'}<b_{2j}\}-(j-1)\\
    &=\# \{j':j'\geq j \mbox{ and } b_{1j'}<b_{2j}\}=r_{2j}(T).
  \end{align*}
These choices contribute $[r_{2j}(T)]_q$ to $F(\hat{T})$. Once $\theta_2$ has been defined for all elements in the second row of $\hat{T}$ we proceed to define $\theta_3$ starting with the entry $a_{31}$. Continuing this line of reasoning, it can be seen that the number of choices for $\theta_i(a_{ij})$ is equal to
    \begin{align*}
    &\# \{j':b_{i-1,j'}<b_{ij}\}-(j-1)\\
    &=\# \{j':j'\geq j \mbox{ and } b_{i-1,j'}<b_{ij}\}=r_{ij}(T),
  \end{align*}
and these choices contribute $[r_{ij}(T)]_q$ to $F(\hat{T})$. Therefore
  \begin{align*}
    F(\hat{T})&=\prod_{i\geq 2}\prod_{j\geq 1}[r_{ij}(T)]_q=r_q(T). \qedhere
  \end{align*} 
\end{proof}

In the rest of this section, $t$ denotes a formal variable. We are now in a position to define the polynomials $b_{\mu\alpha}(q)$ mentioned in the introduction.
 \begin{definition}\label{def:bmualpha}
For each partition $\mu$ and composition $\alpha$ with $|\mu|=|\alpha|$, define
\begin{align*}
  \tilde{b}_{\mu\alpha}(q,t):= \sum_{T\in \ssyt(\mu,\alpha)}r_q(T)s_t(T),
\end{align*}
and
\begin{align*}
  b_{\mu\alpha}(q):= \tilde{b}_{\mu\alpha}(q,q).  
\end{align*}
\end{definition}
 Let ${\rm sort}(\alpha)$ denote the partition obtained by sorting the sequence $\alpha_i$ $(i\geq 1)$ in weakly decreasing order. Recall that the Kostka number $K_{\mu\alpha}$ is defined as the number of semistandard tableaux of shape $\mu$ and content $\alpha$. By setting $q=0$ in Definition \ref{def:bmualpha}, it follows that $b_{\mu\alpha}(0)=K_{\mu\alpha}$. It follows from Definition \ref{def:bmualpha} that $b_{\mu\alpha}(q)$ is nonzero precisely when $K_{\mu\alpha}>0$ which happens when $\mu\geq {\rm sort}(\alpha)$ in the dominance order on partitions.

\begin{example}\label{eg:diagram}
Suppose $\mu=(5,1)$ and $\alpha=(3,3)$. In this case $\ssyt(\mu,\alpha)$ consists of the single tableau $T=\ytableaushort{11122,2}$ with $r_q(T)=s_q(T)=[3]_q$. Therefore
  \begin{align*}
    b_{\mu\alpha}(q)=r_q(T)s_q(T)=q^4+2q^3+3q^2+2q+1.
  \end{align*}
 We wish to show that the polynomials $b_{\mu\alpha}(q)$ defined above can also be obtained from several set partition statistics. Let $\inv(w)$ denote the number of inversions of $w$ and suppose $\mu=(5,1)$ and $\alpha=(3,3)$ are as in Example \ref{eg:diagram} above. The values of $i_\alpha$ and $\inv_\alpha$ for set partitions in $\Pi(\mu',\alpha)$ are shown in Table \ref{tab:statistics}. In this case it is easily verified that
  \begin{align*}
b_{\mu\alpha}(q)=   \sum_{\AA\in \Pi(\mu',\alpha)}q^{i_{\alpha}(\AA)+\inv_\alpha(\AA)}.
  \end{align*}
\end{example}
In general, we have the following result.
\begin{table}
\centering  
    \begin{tabular}{|cccccc|}
      \hline
      $\AA$ & Diagram of $\AA$ & $w(\AA)$ &$i_\alpha(\AA)$ & $\varphi_\alpha(\AA)$&$i_\alpha(\AA)+\varphi_\alpha(\AA)$\\
      \hline
            $14|2|3|5|6$ &
\begin{minipage}{0.45\textwidth}
                      \begin{tikzpicture}
                        [scale=1,every node/.style={circle,fill=black,inner sep=2pt, minimum size=6pt}]
                        \node[label=below:$\textcolor{blue}{1}$] (1) at (1,0) {};
                        \node[label=below:$\textcolor{blue}{2}$] (2) at (2,0) {};
                        \node[label=below:$\textcolor{blue}{3}$] (3) at (3,0) {};
                        \node[label=below:$\textcolor{red}{4}$] (4) at (4,0) {};
                        \node[label=below:$\textcolor{red}{5}$] (5) at (5,0) {};
                        \node[label=below:$\textcolor{red}{6}$] (6) at (6,0) {};
                        \node[label=below:$\infty$] (0) at (7,0) {};
                        \draw[thick,color=teal]
                        (2) [out=45, in=135] to  (0);
                        \draw[thick,color=teal]
                        (3) [out=45, in=135] to  (0);
                        \draw[thick,color=teal]
                        (5) [out=45, in=135] to  (0);
                        \draw[thick,color=teal]
                        (6) [out=45, in=135] to  (0);
                        \draw[thick,color=teal]
                        (1) [out=45, in=135] to  (4);
                        \draw[thick,color=orange]
                        (4) [out=45, in=135] to  (0);
                      \end{tikzpicture}
                    \end{minipage}      &$111\cdot 211$                      & 2&2&4\\
      $15|2|3|4|6$ &
\begin{minipage}{0.45\textwidth}
                      \begin{tikzpicture}
                        [scale=1,every node/.style={circle,fill=black,inner sep=2pt, minimum size=6pt}]
                        \node[label=below:$\textcolor{blue}{1}$] (1) at (1,0) {};
                        \node[label=below:$\textcolor{blue}{2}$] (2) at (2,0) {};
                        \node[label=below:$\textcolor{blue}{3}$] (3) at (3,0) {};
                        \node[label=below:$\textcolor{red}{4}$] (4) at (4,0) {};
                        \node[label=below:$\textcolor{red}{5}$] (5) at (5,0) {};
                        \node[label=below:$\textcolor{red}{6}$] (6) at (6,0) {};
                        \node[label=below:$\infty$] (0) at (7,0) {};
                        \draw[thick,color=teal]
                        (2) [out=45, in=135] to  (0);
                        \draw[thick,color=teal]
                        (3) [out=45, in=135] to  (0);
                        \draw[thick,color=teal]
                        (4) [out=45, in=135] to  (0);
                        \draw[thick,color=teal]
                        (6) [out=45, in=135] to  (0);
                        \draw[thick,color=teal]
                        (1) [out=45, in=135] to  (5);
                        \draw[thick,color=orange]
                        (5) [out=45, in=135] to  (0);
                      \end{tikzpicture}
                    \end{minipage}             &$111\cdot 121$               & 2&1&3\\
            $16|2|3|4|5$ &
\begin{minipage}{0.45\textwidth}
                      \begin{tikzpicture}
                        [scale=1,every node/.style={circle,fill=black,inner sep=2pt, minimum size=6pt}]
                        \node[label=below:$\textcolor{blue}{1}$] (1) at (1,0) {};
                        \node[label=below:$\textcolor{blue}{2}$] (2) at (2,0) {};
                        \node[label=below:$\textcolor{blue}{3}$] (3) at (3,0) {};
                        \node[label=below:$\textcolor{red}{4}$] (4) at (4,0) {};
                        \node[label=below:$\textcolor{red}{5}$] (5) at (5,0) {};
                        \node[label=below:$\textcolor{red}{6}$] (6) at (6,0) {};
                        \node[label=below:$\infty$] (0) at (7,0) {};
                        \draw[thick,color=teal]
                        (2) [out=45, in=135] to  (0);
                        \draw[thick,color=teal]
                        (3) [out=45, in=135] to  (0);
                        \draw[thick,color=teal]
                        (4) [out=45, in=135] to  (0);
                        \draw[thick,color=teal]
                        (5) [out=45, in=135] to  (0);
                        \draw[thick,color=teal]
                        (1) [out=45, in=135] to  (6);
                        \draw[thick,color=orange]
                        (6) [out=45, in=135] to  (0);
                      \end{tikzpicture}
                    \end{minipage}                   &$111\cdot 112$         & 2&0&2\\
      $1|24|3|5|6$ & 
                    \begin{minipage}{0.45\textwidth}
                      \begin{tikzpicture}
                        [scale=1,every node/.style={circle,fill=black,inner sep=2pt, minimum size=6pt}]
                        \node[label=below:$\textcolor{blue}{1}$] (1) at (1,0) {};
                        \node[label=below:$\textcolor{blue}{2}$] (2) at (2,0) {};
                        \node[label=below:$\textcolor{blue}{3}$] (3) at (3,0) {};
                        \node[label=below:$\textcolor{red}{4}$] (4) at (4,0) {};
                        \node[label=below:$\textcolor{red}{5}$] (5) at (5,0) {};
                        \node[label=below:$\textcolor{red}{6}$] (6) at (6,0) {};
                        \node[label=below:$\infty$] (0) at (7,0) {};
                        \draw[thick,color=teal]
                        (1) [out=45, in=135] to  (0);
                        \draw[thick,color=teal]
                        (3) [out=45, in=135] to  (0);
                        \draw[thick,color=teal]
                        (5) [out=45, in=135] to  (0);
                        \draw[thick,color=teal]
                        (6) [out=45, in=135] to  (0);
                        \draw[thick,color=teal]
                        (2) [out=45, in=135] to  (4);
                        \draw[thick,color=orange]
                        (4) [out=45, in=135] to  (0);
                      \end{tikzpicture}
                    \end{minipage}&$111\cdot 211$                            &  1&2&3\\
     $1|25|3|4|6$&
\begin{minipage}{0.45\textwidth}
                      \begin{tikzpicture}
                        [scale=1,every node/.style={circle,fill=black,inner sep=2pt, minimum size=6pt}]
                        \node[label=below:$\textcolor{blue}{1}$] (1) at (1,0) {};
                        \node[label=below:$\textcolor{blue}{2}$] (2) at (2,0) {};
                        \node[label=below:$\textcolor{blue}{3}$] (3) at (3,0) {};
                        \node[label=below:$\textcolor{red}{4}$] (4) at (4,0) {};
                        \node[label=below:$\textcolor{red}{5}$] (5) at (5,0) {};
                        \node[label=below:$\textcolor{red}{6}$] (6) at (6,0) {};
                        \node[label=below:$\infty$] (0) at (7,0) {};
                        \draw[thick,color=teal]
                        (1) [out=45, in=135] to  (0);
                        \draw[thick,color=teal]
                        (3) [out=45, in=135] to  (0);
                        \draw[thick,color=teal]
                        (4) [out=45, in=135] to  (0);
                        \draw[thick,color=teal]
                        (6) [out=45, in=135] to  (0);
                        \draw[thick,color=teal]
                        (2) [out=45, in=135] to  (5);
                        \draw[thick,color=orange]
                        (5) [out=45, in=135] to  (0);
                      \end{tikzpicture}
                    \end{minipage}       &$111\cdot 121$                     & 1&1&2\\
      $1|26|3|4|5$ &
\begin{minipage}{0.45\textwidth}
                      \begin{tikzpicture}
                        [scale=1,every node/.style={circle,fill=black,inner sep=2pt, minimum size=6pt}]
                        \node[label=below:$\textcolor{blue}{1}$] (1) at (1,0) {};
                        \node[label=below:$\textcolor{blue}{2}$] (2) at (2,0) {};
                        \node[label=below:$\textcolor{blue}{3}$] (3) at (3,0) {};
                        \node[label=below:$\textcolor{red}{4}$] (4) at (4,0) {};
                        \node[label=below:$\textcolor{red}{5}$] (5) at (5,0) {};
                        \node[label=below:$\textcolor{red}{6}$] (6) at (6,0) {};
                        \node[label=below:$\infty$] (0) at (7,0) {};
                        \draw[thick,color=teal]
                        (1) [out=45, in=135] to  (0);
                        \draw[thick,color=teal]
                        (3) [out=45, in=135] to  (0);
                        \draw[thick,color=teal]
                        (4) [out=45, in=135] to  (0);
                        \draw[thick,color=teal]
                        (5) [out=45, in=135] to  (0);
                        \draw[thick,color=teal]
                        (2) [out=45, in=135] to  (6);
                        \draw[thick,color=orange]
                        (6) [out=45, in=135] to  (0);
                      \end{tikzpicture}
                    \end{minipage}              &$111\cdot 112$              & 1&0&1\\
      $1|2|34|5|6$&
                   \begin{minipage}{0.45\textwidth}
                      \begin{tikzpicture}
                        [scale=1,every node/.style={circle,fill=black,inner sep=2pt, minimum size=6pt}]
                        \node[label=below:$\textcolor{blue}{1}$] (1) at (1,0) {};
                        \node[label=below:$\textcolor{blue}{2}$] (2) at (2,0) {};
                        \node[label=below:$\textcolor{blue}{3}$] (3) at (3,0) {};
                        \node[label=below:$\textcolor{red}{4}$] (4) at (4,0) {};
                        \node[label=below:$\textcolor{red}{5}$] (5) at (5,0) {};
                        \node[label=below:$\textcolor{red}{6}$] (6) at (6,0) {};
                        \node[label=below:$\infty$] (0) at (7,0) {};
                        \draw[thick,color=teal]
                        (1) [out=45, in=135] to  (0);
                        \draw[thick,color=teal]
                        (2) [out=45, in=135] to  (0);
                        \draw[thick,color=teal]
                        (5) [out=45, in=135] to  (0);
                        \draw[thick,color=teal]
                        (6) [out=45, in=135] to  (0);
                        \draw[thick,color=teal]
                        (3) [out=45, in=135] to  (4);
                        \draw[thick,color=orange]
                        (4) [out=45, in=135] to  (0);
                      \end{tikzpicture}
                    \end{minipage}&$111\cdot 211$                          & 0&2&2\\
      $1|2|35|4|6$ &
\begin{minipage}{0.45\textwidth}
                      \begin{tikzpicture}
                        [scale=1,every node/.style={circle,fill=black,inner sep=2pt, minimum size=6pt}]
                        \node[label=below:$\textcolor{blue}{1}$] (1) at (1,0) {};
                        \node[label=below:$\textcolor{blue}{2}$] (2) at (2,0) {};
                        \node[label=below:$\textcolor{blue}{3}$] (3) at (3,0) {};
                        \node[label=below:$\textcolor{red}{4}$] (4) at (4,0) {};
                        \node[label=below:$\textcolor{red}{5}$] (5) at (5,0) {};
                        \node[label=below:$\textcolor{red}{6}$] (6) at (6,0) {};
                        \node[label=below:$\infty$] (0) at (7,0) {};
                        \draw[thick,color=teal]
                        (1) [out=45, in=135] to  (0);
                        \draw[thick,color=teal]
                        (2) [out=45, in=135] to  (0);
                        \draw[thick,color=teal]
                        (4) [out=45, in=135] to  (0);
                        \draw[thick,color=teal]
                        (6) [out=45, in=135] to  (0);
                        \draw[thick,color=teal]
                        (3) [out=45, in=135] to  (5);
                        \draw[thick,color=orange]
                        (5) [out=45, in=135] to  (0);
                      \end{tikzpicture}
                    \end{minipage}       &$111\cdot 121$                     & 0&1&1\\
      $1|2|36|4|5$ &
\begin{minipage}{0.45\textwidth}
                      \begin{tikzpicture}
                        [scale=1,every node/.style={circle,fill=black,inner sep=2pt, minimum size=6pt}]
                        \node[label=below:$\textcolor{blue}{1}$] (1) at (1,0) {};
                        \node[label=below:$\textcolor{blue}{2}$] (2) at (2,0) {};
                        \node[label=below:$\textcolor{blue}{3}$] (3) at (3,0) {};
                        \node[label=below:$\textcolor{red}{4}$] (4) at (4,0) {};
                        \node[label=below:$\textcolor{red}{5}$] (5) at (5,0) {};
                        \node[label=below:$\textcolor{red}{6}$] (6) at (6,0) {};
                        \node[label=below:$\infty$] (0) at (7,0) {};
                        \draw[thick,color=teal]
                        (1) [out=45, in=135] to  (0);
                        \draw[thick,color=teal]
                        (2) [out=45, in=135] to  (0);
                        \draw[thick,color=teal]
                        (4) [out=45, in=135] to  (0);
                        \draw[thick,color=teal]
                        (5) [out=45, in=135] to  (0);
                        \draw[thick,color=teal]
                        (3) [out=45, in=135] to  (6);
                        \draw[thick,color=orange]
                        (6) [out=45, in=135] to  (0);
                      \end{tikzpicture}
                    \end{minipage}              &$111\cdot 112$              & 0&0&0\\
      \hline
    \end{tabular}
  \caption{The statistics $i_\alpha(\AA)$ and $\varphi_\alpha(\AA)$ for  $\AA\in\Pi(\mu',\alpha)$ when $\mu=(5,1)$, $\alpha=(3,3)$ and $\varphi=\inv$. Here $b_{\mu\alpha}(q)=q^4+2q^3+3q^2+2q+1$.}
  \label{tab:statistics} 
\end{table}

\begin{theorem}
  \label{thm:bviasetstat}
For each Mahonian statistic $\varphi$,
  \begin{align*}
   \tilde{b}_{\mu\alpha}(q,t)=\sum_{\AA\in \Pi(\mu',\alpha)}q^{i_\alpha(\AA)} t^{\varphi_\alpha(\AA)}.
  \end{align*}
In particular, 
    \begin{align*}
   b_{\mu\alpha}(q)=   \sum_{\AA\in \Pi(\mu',\alpha)}q^{i_{\alpha}(\AA)+\varphi_\alpha(\AA)}.
    \end{align*}

\end{theorem}
\begin{proof}
  Observe that
    \begin{align*} 
      \sum_{\AA\in \Pi(\mu',\alpha)}q^{i_\alpha(\AA)} t^{\varphi_\alpha(\AA)}                                                   &=\sum_{T\in \ssyt(\mu,\alpha)}\sum_{\substack{\hat{T}\in \syt(\mu,\alpha)\\ \SS_\alpha(\hat{T})=T}}\sum_{\substack{\AA\in \Pi(\mu',\alpha)\\ \TT(\AA)=\hat{T}}} q^{i_\alpha(\AA)}t^{\varphi_\alpha(\AA)}.
    \end{align*}
     By Proposition \ref{prop:onlysyt}, the expression on the right hand side above is equal to
    \begin{align*}
   \sum_{T\in \ssyt(\mu,\alpha)}\sum_{\substack{\hat{T}\in \syt(\mu,\alpha)\\ \SS_\alpha(\hat{T})=T}}t^{\varphi_\alpha(\hat{T})}\sum_{\substack{\AA\in \Pi(\mu',\alpha)\\ \TT(\AA)=\hat{T}}} q^{i_\alpha(\AA)}=\sum_{T\in \ssyt(\mu,\alpha)}\sum_{\substack{\hat{T}\in \syt(\mu,\alpha)\\ \SS_\alpha(\hat{T})=T}}t^{\varphi_\alpha(\hat{T})}r_q(T),    \end{align*}
 where the last equality follows from Proposition \ref{prop:interlace}. Thus 
    \begin{align*}
      \sum_{\AA\in \Pi(\mu',\alpha)}q^{i_\alpha(\AA)}t^{\varphi_\alpha(\AA)}       &=\sum_{T\in \ssyt(\mu,\alpha)}r_q(T)\sum_{\substack{\hat{T}\in \syt(\mu,\alpha)\\ \SS_\alpha(\hat{T})=T}}t^{\varphi_\alpha(\hat{T})}.
    \end{align*}
By Lemma \ref{lem:sqt}, the inner sum on the right hand side above is equal to $s_t(T)$. Therefore,
    \begin{align*}
      \sum_{\AA\in \Pi(\mu',\alpha)}q^{\varphi_\alpha(\AA)} t^{i_\alpha(\AA)}          &=\sum_{T\in \ssyt(\mu,\alpha)}r_q(T)s_t(T) =\tilde{b}_{\mu\alpha}(q,t). \qedhere
    \end{align*}
  \end{proof}

 The number of binary integer matrices with given row and column sums can be expressed in terms of $b_{\mu\alpha}(1)$ (see Corollary \ref{cor:binmat}).  

\section{Diagonal operators with a given profile}\label{sec:diagonal}
In this section, we prove that for each diagonal matrix $\Delta\in \MM_n(\Fq)$, the number of subspaces with a given $\Delta$-profile (see Definition \ref{def:profile}) can be expressed in terms of the polynomials $b_{\mu\alpha}(q)$. The main idea in this section is to use a cell decomposition of the Grassmanian along the lines of \cite[ Sec. 4]{pr}, but some new ideas are required to solve the more general recursion obtained (see Theorem \ref{thm:recurrence}) and express the answer as a sum over semistandard tableaux. Given positive integers $m$ and $n$, let $C(n,m)$ denote the set of all $m$-element subsets of $[n]$. By the pivots of a subspace $W\subseteq \Fq^n$, we mean the indices of the pivot columns of the unique matrix in row reduced echelon form whose rows span $W$. We can view the ordered set of pivots of any $m$-dimensional subspace $W\subseteq \Fq^n$ as an element of $C(n,m)$. For instance, the subspace of $\mathbb{F}_q^6$ spanned by the rows of the matrix
\begin{align*} 
  A=
  \begin{pmatrix}
    1 & 0 & 3 & 0 & 0 & 1\\
    0 & 1 & 4 & 0 & 3 & 0\\
    0 & 0 & 0 & 1 & 2& 5
  \end{pmatrix}
\end{align*}
has pivots $(1,2,4)\in C(6,3)$. To each $C\in C(n,m)$, we associate a binary word $\bb(C)$ of length $n$ defined by 
\begin{align*}
  \mathsf{b}(C):=b_1b_2\cdots b_n,
\end{align*}
where $b_i=1$ if $i\in C$ and 0 otherwise. The following result of Knuth~\cite{MR270933} shows that the number of subspaces over a finite field with pivots $C$ can be expressed in terms of the number of inversions of the word $\bb(C)$.
\begin{proposition}\label{prop:knuth}
  The number of $m$-dimensional subspaces $W\subseteq \Fq^n$ with pivots $C=(c_1,\ldots,c_m)$ is given by $q^N$ where
  \begin{align*}
N=   \inv(\bb(C))=\sum_{i=1}^m (n-m-c_i+i).
  \end{align*}
\end{proposition}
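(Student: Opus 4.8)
The plan is to use the standard bijection between $m$-dimensional subspaces of $\Fq^n$ and $m\times n$ matrices in row reduced echelon form (RREF): every subspace $W$ is the row space of a unique RREF matrix, and distinct RREF matrices have distinct row spaces. Under this bijection, counting the subspaces with a prescribed ordered pivot set $C=(c_1,\ldots,c_m)$, $c_1<\cdots<c_m$, is the same as counting RREF matrices whose pivots lie in columns $c_1,\ldots,c_m$. I would carry this out by counting the entries of such a matrix that may be chosen freely, each ranging independently over $\Fq$.

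First I would pin down the shape of such an RREF matrix. In row $i$ the leading entry sits in column $c_i$ and equals $1$; every entry of row $i$ to the left of column $c_i$ vanishes, and every entry of row $i$ lying in another pivot column is forced to be $0$ by the echelon conditions. Thus the only entries that may vary in row $i$ are those strictly to the right of $c_i$ and not in a pivot column. There are $n-c_i$ positions strictly to the right of $c_i$, of which exactly $m-i$---namely columns $c_{i+1},\ldots,c_m$---are pivot columns; this leaves
\[
(n-c_i)-(m-i)=n-m-c_i+i
\]
free entries in row $i$, each assignable to any element of $\Fq$. Since the rows may be filled independently, summing over $i$ shows that there are exactly $q^N$ such RREF matrices, with $N=\sum_{i=1}^m(n-m-c_i+i)$, and hence exactly $q^N$ subspaces with pivots $C$.

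It remains to recognise $N$ as $\inv(\bb(C))$. By definition an inversion of the binary word $\bb(C)=b_1\cdots b_n$ is a pair $i<j$ with $b_i=1$ and $b_j=0$, that is, a pivot position followed by a later non-pivot position. For the pivot $c_i$ the number of non-pivot positions to its right is precisely $(n-c_i)-(m-i)$, which is the row-$i$ summand computed above; summing over the pivots gives $\inv(\bb(C))=N$. I do not anticipate a serious obstacle here: the substance of the argument is the bookkeeping of the RREF pattern, and the only point demanding care is the verification that the forced entries are exactly those in pivot columns together with those to the left of each pivot, so that the free-entry count is neither overstated nor understated.
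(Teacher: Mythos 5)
Your argument is correct and complete: the reduction to counting RREF matrices with prescribed pivot columns, the count of $n-m-c_i+i$ free entries in row $i$, and the identification of this row count with the number of inversions contributed by the pivot $c_i$ in $\bb(C)$ are all accurate. The paper does not supply its own proof here---it cites Knuth for this proposition---and your free-entry bookkeeping is precisely the standard argument behind that citation, so there is nothing to add.
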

\begin{example}\label{eg:shape}
A 3-dimensional subspace of $\Fq^n(n\geq 5)$ with pivots $C=(2,4,5)$ is spanned by the rows of some matrix of the form
\begin{align*}
  \begin{pmatrix}
    0& 1 & * & 0 & 0 &* &\cdots &*\\
    0& 0 & 0 & 1 & 0 &* &\cdots &*\\
    0& 0 & 0 & 0 & 1 &* &\cdots &*
  \end{pmatrix}.
\end{align*}
Here, asterisks denote arbitrary elements of $\Fq$. Permuting the columns of this matrix by moving pivot columns to the left, we obtain the matrix
\begin{align*}
\left(  \begin{array}{ccc|ccccc}
    1& 0 & 0 & 0 & * &* &\cdots &*\\
    0& 1 & 0 & 0 & 0 &* &\cdots &*\\
    0& 0 & 1 & 0 & 0 &* &\cdots &*
  \end{array}\right),
\end{align*}
which is of the form $(I\mid X)$, where $I$ denotes the identity matrix and $X=(x_{ij})$ is a matrix with $x_{ij}=0$ for $j< c_i-(i-1)$. 
\end{example}
\begin{definition}[Shape of a matrix]
Given a composition $\beta=(\beta_1,\ldots,\beta_m)$, a matrix $X=(x_{ij})$ with $m$ rows has shape $\beta=(\beta_1,\ldots,\beta_m)$ if $x_{ij}=0$ whenever $j<\beta_i$ for $1\leq i\leq m$.
\end{definition}
Note that a given matrix can have multiple shapes. For instance, the matrix $X$ in Example~\ref{eg:shape} above has shapes $(2,3,3)$ and $(1,2,3)$. 
For each pivot set $C\in C(n,m)$ write $C-\delta$ for the tuple $(c_1,c_2-1,\ldots,c_m-(m-1))$. 

Denote by $\diag(a_1,\ldots,a_n)$ the diagonal matrix whose diagonal entries are $a_1,\ldots,a_n$. To each diagonal matrix $\Delta$, associate a set partition $\AA^\Delta\in \Pi_n$ by declaring that $i$ and $j$ lie in the same block of $\AA^\Delta$ if and only if $\Delta(i,i)=\Delta(j,j)$.
\begin{definition}\label{def:diagtype}
The type of a diagonal matrix $\Delta$ is the composition obtained by listing successive block sizes of $\AA^\Delta$ written in standard form. 
\end{definition}
For example, if $\Delta=\diag(1,4,3,4,3,4,1)$, then $\AA^\Delta=17|246|35$ and $\Delta$ has type $(2,3,2)$.
Since permuting diagonal entries does not change the conjugacy class of $\Delta$, it will occasionally be convenient to assume that the type is an integer partition.

\begin{definition}
  A square matrix is said to be \emph{block-scalar} if it is diagonal and all occurrences of each diagonal entry are contiguous.
\end{definition}
For example, the matrix $\diag(1,1,1,2,4,4)\in\MM_6(\mathbb{F}_5)$ is block-scalar but $\diag(1,2,1,1,2)$ is not. Note that every diagonal matrix is permutation similar to a block-scalar matrix. The following proposition may be viewed as a generalization of Proposition \ref{prop:knuth} as every subspace is invariant under a scalar multiple of the identity. 

\begin{proposition}\label{prop:invariant}
  Let $\Delta=\diag(a_1,\ldots,a_n)\in \MM_n(\Fq)$ be a block-scalar matrix of type $\alpha$. Given $C\in C(n,m)$, the number of $\Delta$-invariant subspaces of $\Fq^n$ with pivots $C$ is given by
  \begin{align*}
   \sigma^C(\alpha)=q^{\inv_\alpha(\bb(C))}.
  \end{align*}
\end{proposition}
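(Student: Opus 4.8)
The strategy is to parametrize the subspaces with pivots $C$ by their reduced row echelon matrices and then read off exactly which free entries are compatible with $\Delta$-invariance. As in Example~\ref{eg:shape} and Proposition~\ref{prop:knuth}, every $m$-dimensional $W$ with pivots $C=(c_1,\ldots,c_m)$ is the row space of a unique matrix $R=(r_{ij})$ in reduced row echelon form: row $i$ has a $1$ in its pivot column $c_i$, zeros in every other pivot column, and free entries $r_{ij}\in\Fq$ precisely in the non-pivot columns $j>c_i$; the total number of such free entries is $\inv(\bb(C))$. I would impose the invariance condition directly on these free entries.

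First I would record the elementary fact that a vector of $W$ supported only on the non-pivot columns must vanish: if $v=\sum_\ell d_\ell \rho_\ell\in W$, where $\rho_\ell$ denotes the $\ell$-th row of $R$, then the entry of $v$ in pivot column $c_\ell$ equals $d_\ell$ (since in echelon form only $\rho_\ell$ is nonzero there), so vanishing on all pivot columns forces every $d_\ell=0$. Since the rows $\rho_i$ span $W$, the subspace is $\Delta$-invariant iff $\Delta\rho_i\in W$ for every $i$. Writing $a_j=\Delta(j,j)$, the vector $\Delta\rho_i-a_{c_i}\rho_i$ has entries $(a_j-a_{c_i})r_{ij}$ in the non-pivot columns $j>c_i$ and zeros in all pivot columns; hence it is supported on non-pivot columns, so by the preceding fact $\Delta\rho_i\in W$ if and only if
\begin{align*}
(a_j-a_{c_i})\,r_{ij}=0 \qquad\text{for every non-pivot column } j>c_i.
\end{align*}

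Working over the field $\Fq$, this says that a free entry $r_{ij}$ may be chosen arbitrarily when $a_j=a_{c_i}$ and is forced to be $0$ otherwise. Because $\Delta$ is block-scalar of type $\alpha=(\alpha_1,\ldots,\alpha_k)$, the equality $a_j=a_{c_i}$ holds exactly when $j$ and $c_i$ lie in the same block $I_s=\{\alpha_1+\cdots+\alpha_{s-1}+1,\ldots,\alpha_1+\cdots+\alpha_s\}$. Therefore the number of $\Delta$-invariant subspaces with pivots $C$ equals $q$ raised to the number of pairs $(c_i,j)$ with $c_i<j$, where $c_i$ is a pivot column and $j$ a non-pivot column both lying in the same block. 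Summing block by block, this exponent is $\sum_{s=1}^k\inv(w_s)$, where $w_s$ is the length-$\alpha_s$ subword of $\bb(C)$ indexed by $I_s$, an inversion of the binary word $w_s$ being exactly a pivot-before-non-pivot pair inside $I_s$. By the definition of $\inv_\alpha$ this sum is $\inv_\alpha(\bb(C))$, giving $\sigma^C(\alpha)=q^{\inv_\alpha(\bb(C))}$.

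I expect the main point to be the reduction in the second paragraph: recognizing that $\Delta$-invariance is equivalent to the single family of scalar equations $(a_j-a_{c_i})r_{ij}=0$, which rests on the observation that $W$ contains no nonzero vector supported off the pivot columns. The role of the block-scalar hypothesis is then purely bookkeeping, since it guarantees that the ``same block'' condition on column indices coincides with the splitting of $\bb(C)$ into the consecutive words $w_1,\ldots,w_k$ used to define $\inv_\alpha$, so that no reindexing is needed.
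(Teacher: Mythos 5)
Your proof is correct and follows essentially the same route as the paper: both parametrize subspaces by their reduced row echelon matrices, reduce $\Delta$-invariance to the scalar conditions $(a_j-a_{c_i})r_{ij}=0$ on the free entries, and count the surviving free entries as same-block pivot/non-pivot pairs, i.e.\ $\inv_\alpha(\bb(C))$. The only cosmetic difference is that the paper first permutes the pivot columns to the left and phrases the condition as the matrix equation $X\Delta_2-\Delta_1X=0$ obtained from a rank computation, whereas you work row by row in the original coordinates via the observation that $W$ contains no nonzero vector supported off the pivot columns.
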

\begin{proof}
  Let $W$ be an $m$-dimensional $\Delta$-invariant subspace of $\Fq^n$ with pivots $C=(c_1,\ldots,c_m)$. Let $A_W$ denote the matrix in reduced row echelon form whose rows span $W$. Moving the pivot columns of $A_W$ to the left, we obtain an $m\times n$ matrix $(I \mid X)$ where $X$ is of shape $C-\delta$. Since $W$ is $\Delta$-invariant, it follows that the rows of $(I \mid X)$ are $\tilde{\Delta}$-invariant where
  \begin{align*}
    \tilde{\Delta}=
    \begin{pmatrix}
      \Delta_1& {\bf 0}\\
     {\bf 0}&\Delta_2
    \end{pmatrix}, 
  \end{align*}
  for the diagonal matrices $\Delta_1,\Delta_2$ given by $\Delta_1=\diag(a_j:j\in C)$ and $\Delta_2=\diag(a_j:j\in [n]\setminus C)$. Thus the number of $\Delta$-invariant subspaces with pivots $C$ is equal to the number of matrices $X$ of shape $C-\delta$ for which the block matrix
  \begin{align*}
    \begin{pmatrix}
      I & X\\
      \Delta_1& X\Delta_2
    \end{pmatrix}
  \end{align*}
  has rank $m$. The above matrix is row equivalent to
  \begin{align*}
    \begin{pmatrix}
      I & X\\
      {\bf 0} & X\Delta_2-\Delta_1X
    \end{pmatrix}.
  \end{align*}
  Therefore $\sigma^C(\alpha)$ is equal to the number of $m\times (n-m)$ matrices $X$ of shape $C-\delta$ for which $X\Delta_2-\Delta_1X=0.$ If $X=(x_{ij})$, then the last condition is equivalent to $x_{ij}(\Delta_2(j,j)-\Delta_1(i,i))=0$. Since $X$ has shape $C-\delta$, we have $x_{ij}=0$ unless $j>c_i-i$. Therefore $\sigma^C(\alpha)=q^N,$ where
  \begin{align*}
    N=\sum_{\substack{i\in [m]\\j\in [n-m]\\j>c_i-i }}[\Delta_1(i,i)=\Delta_2(j,j)].
  \end{align*}
Here, for any statement $P$, the Iverson bracket $[P]$ is defined to be equal to 1 if $P$ is true and 0 otherwise. If $\phi_C:[n-m]\to [n]-C$ denotes the unique order preserving bijection, then the last sum is equivalent to
  \begin{align*}
    N=\sum_{\substack{i\in [m]\\j\in [n-m]\\\phi_C(j)>c_i }}[\Delta(c_i,c_i)=\Delta(\phi_C(j),\phi_C(j))],
  \end{align*}
  since $\phi_C(j)>c_i$ if and only if $j>c_i-i$. Since $\Delta(\phi_C(j),\phi_C(j))=\Delta(c_i,c_i)$ precisely when $\phi_C(j)$ and $c_i$ lie in the same block of $\AA_\alpha$, it follows that $N$ is equal to the number of pairs $(c,c')\in C\times ([n]\setminus C)$ such that $c<c'$ and both $c,c'$ lie in the same block of $\AA_\alpha$. Therefore $N=\inv_\alpha(\bb(C))$.  
\end{proof}

\begin{corollary}\label{cor:fiber}
  If $\Delta_1,\Delta_2$ are as in the proof of Proposition \ref{prop:invariant}, then each nonempty fiber of the map $X\mapsto X\Delta_2-\Delta_1X$ defined on $m\times (n-m)$ matrices of shape $C-\delta$ has cardinality $q^{\inv_\alpha(\bb(C))}$. 
\end{corollary}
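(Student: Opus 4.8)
The plan is to exploit the linearity of the map under consideration and reduce the claim to the kernel count already performed in the proof of Proposition~\ref{prop:invariant}. Write $V$ for the set of $m\times(n-m)$ matrices of shape $C-\delta$. Since the shape condition merely forces the entries $x_{ij}$ with $j<c_i-i$ to vanish, $V$ is an $\Fq$-linear subspace of the space of all $m\times(n-m)$ matrices over $\Fq$. The map $L\colon X\mapsto X\Delta_2-\Delta_1 X$ is $\Fq$-linear, being the difference of the linear maps $X\mapsto X\Delta_2$ and $X\mapsto\Delta_1 X$; restricting it to $V$ gives an $\Fq$-linear map from the finite-dimensional $\Fq$-vector space $V$ to the space of $m\times(n-m)$ matrices.

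First I would invoke the standard coset description of the fibers of a linear map: if $L(X_0)=Y$ for some $X_0\in V$, then $L^{-1}(Y)=X_0+\ker L$, where $\ker L=\{X\in V: X\Delta_2-\Delta_1 X=0\}$. Over the finite field $\Fq$ this immediately yields $\lvert L^{-1}(Y)\rvert=\lvert\ker L\rvert$ for every $Y$ in the image of $L$, so that all nonempty fibers are equinumerous with the kernel.

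It then remains only to identify the cardinality of the kernel, i.e.\ of the fiber $L^{-1}(0)$. But this is precisely the quantity computed in the proof of Proposition~\ref{prop:invariant}: the number of $X\in V$ satisfying $X\Delta_2-\Delta_1 X=0$ was shown there to equal $q^{N}$ with $N=\inv_\alpha(\bb(C))$. Combining this with the coset argument gives $\lvert L^{-1}(Y)\rvert=q^{\inv_\alpha(\bb(C))}$ for every nonempty fiber, which is the assertion of the corollary.

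I do not expect a genuine obstacle here; the substantive work was already carried out in the proposition. The only points deserving an explicit word are that the shape constraint indeed cuts out a linear subspace (so that \emph{fiber of a linear map} is the correct framework) and that the kernel computation of Proposition~\ref{prop:invariant} applies verbatim to $L^{-1}(0)$. With those two observations in place, the corollary is obtained purely by upgrading the kernel count to a statement about all fibers via the equality $\lvert L^{-1}(Y)\rvert=\lvert\ker L\rvert$.
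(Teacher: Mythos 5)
Your argument is correct and is exactly the reasoning the paper leaves implicit: the shape condition cuts out an $\Fq$-linear subspace, $X\mapsto X\Delta_2-\Delta_1X$ is linear on it, so every nonempty fiber is a coset of the kernel, and the kernel count $q^{\inv_\alpha(\bb(C))}$ is precisely what was established in the proof of Proposition~\ref{prop:invariant}. Nothing is missing.
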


 Let $\PP$ denote the set of positive integers. For each pivot set $C$, let $\phi_C:\PP \to \PP \setminus C$ be the unique order preserving bijection. Given pivot sets $C$ and $D$, denote by $T(C,D)$ the tableau whose rows are given by the elements of $C$ and $\phi_C(D)$ arranged in increasing order. For example, when $C=(1,3,4)$ and $D=(1,3)$, we have  
\begin{align*}
  T(C,D)=
  \begin{ytableau} 
      1&3&4\\
      2&6
  \end{ytableau}.
\end{align*}
 Note that the rows of $T(C,D)$ are increasing by construction, but this is not necessarily true of the columns. For instance, when $C=(1,4,5)$ and $D=(1,2,4)$, we have
\begin{align*}
  T(C,D)=
  \begin{ytableau}
      1&4&5\\
      2&3&7
  \end{ytableau},
\end{align*}
which does not have increasing columns. Our next objective is to derive a recursion for the number of subspaces with pivots $C$ and $\Delta$-profile $\mu$. We require the following lemma.

\begin{lemma}\label{lem:eta}
  Let $\mu$ be an integer partition and suppose $\Delta=\diag(a_1,\ldots,a_n)\in \MM_n(\Fq)$ is a block-scalar matrix of type $\alpha$. Given $C\in C(n,\mu_1)$ and $D\in C(n-\mu_1,\mu_2)$, define diagonal matrices $\Delta_1,\Delta_2$ by $\Delta_1=\diag(a_j:j\in C)$ and $\Delta_2=\diag(a_j:j\in [n]\setminus C)$. The number of $\mu_1\times (n-\mu_1)$ matrices $Y$ of shape $C-\delta$ and of the form $Y=X\Delta_2-\Delta_1X$ whose rows span a given subspace $W\subseteq \Fq^{n-\mu_1}$ with pivots $D$ is given by
  \begin{align*}
    \eta_\alpha(C,D)=(q-1)^{\mu_2}q^{\mu_2 \choose 2}r_q(\SS_\alpha(T(C,D))). 
  \end{align*}
\end{lemma}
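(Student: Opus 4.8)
The plan is to reduce the count to a count of full-column-rank coefficient matrices carrying a staircase zero-pattern, and then to evaluate the latter by a greedy column-by-column argument.

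First I would pin down exactly which matrices $Y$ are being counted. As in the proof of Proposition~\ref{prop:invariant}, the entries of $Y=X\Delta_2-\Delta_1X$ are $y_{ij}=x_{ij}\big((\Delta_2)_{jj}-(\Delta_1)_{ii}\big)$, and since $\Delta$ is block-scalar of type $\alpha$ we have $(\Delta_1)_{ii}=a_{c_i}$ and $(\Delta_2)_{jj}=a_{\phi_C(j)}$, which are equal precisely when $c_i$ and $\phi_C(j)$ lie in the same block of $\AA_\alpha$. Combining this with the shape requirement $y_{ij}=0$ for $j<c_i-(i-1)$ (equivalently $\phi_C(j)>c_i$, as noted in Proposition~\ref{prop:invariant}) and using that the blocks of $\AA_\alpha$ are intervals of consecutive integers, I would show that the matrices $Y$ of shape $C-\delta$ lying in the image of $X\mapsto X\Delta_2-\Delta_1X$ are exactly those with
\[
 y_{ij}=0 \quad\text{unless}\quad \SS_\alpha(\phi_C(j))>\SS_\alpha(c_i).
\]
That is, row $i$ of $Y$ is supported only on columns whose $\AA_\alpha$-block index exceeds that of $c_i$.

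Next, since $W$ has pivots $D$, let $A_W$ be its reduced row echelon matrix, with rows $R_1,\dots,R_{\mu_2}$, where $R_k$ has its pivot in column $d_k$. Every $Y$ with row space $W$ is uniquely of the form $Y=MA_W$ for a full-column-rank $M\in\Fq^{\mu_1\times\mu_2}$, and reading off the pivot columns gives $y_{i,d_k}=M_{ik}$; hence the support condition forces $M_{ik}=0$ unless $\SS_\alpha(\phi_C(d_k))>\SS_\alpha(c_i)$. The key step, which I expect to be the main obstacle, is to verify that the remaining (non-pivot) support conditions on $Y$ impose nothing further on $M$: for a non-pivot column $j$ a term $M_{ik}(A_W)_{kj}$ can be nonzero only if $\SS_\alpha(\phi_C(d_k))>\SS_\alpha(c_i)$ (the constraint already placed on $M$) and $j>d_k$ (echelon structure), the latter giving $\SS_\alpha(\phi_C(j))\ge\SS_\alpha(\phi_C(d_k))$; these are incompatible with $\SS_\alpha(\phi_C(j))\le\SS_\alpha(c_i)$, so such $y_{ij}$ vanish automatically. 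This is precisely what makes $\eta_\alpha(C,D)$ depend only on $C$ and $D$ and not on the particular $W$, and it reduces the problem to counting full-column-rank $M$ whose $k$th column is supported on rows $\{1,\dots,p_k\}$, where $p_k=\#\{i:\SS_\alpha(c_i)<\SS_\alpha(\phi_C(d_k))\}$. Here I would record that $\SS_\alpha(c_i)$ is weakly increasing in $i$ (as $c_1<\cdots<c_{\mu_1}$) and $\SS_\alpha(\phi_C(d_k))$ weakly increasing in $k$, so the supports are genuine nested prefixes and $p_1\le\cdots\le p_{\mu_2}$.

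Finally I would count these $M$ from left to right. Having chosen independent columns $1,\dots,k-1$, their span is $(k-1)$-dimensional and, being supported on rows $\le p_{k-1}\le p_k$, lies inside the coordinate subspace on rows $\{1,\dots,p_k\}$; hence column $k$ may be any of the $q^{p_k}-q^{k-1}$ vectors supported on those rows but outside this span. Multiplying yields
\[
 \prod_{k=1}^{\mu_2}\big(q^{p_k}-q^{k-1}\big)=q^{\binom{\mu_2}{2}}\prod_{k=1}^{\mu_2}\big(q^{p_k-k+1}-1\big)=(q-1)^{\mu_2}q^{\binom{\mu_2}{2}}\prod_{k=1}^{\mu_2}[p_k-k+1]_q.
\]
It then remains to identify $p_k-k+1$ with $r_{2k}(\SS_\alpha(T(C,D)))$: since $T(C,D)$ has first row $C$ and second row $\phi_C(D)$, Definition~\ref{def:rqt} gives $r_{2k}=\#\{j'\ge k:\SS_\alpha(c_{j'})<\SS_\alpha(\phi_C(d_k))\}=p_k-(k-1)$ by the monotonicity above, so the product equals $(q-1)^{\mu_2}q^{\binom{\mu_2}{2}}r_q(\SS_\alpha(T(C,D)))=\eta_\alpha(C,D)$, as claimed. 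In the degenerate case where some $p_k<k$ no full-column-rank $M$ exists and both sides vanish, consistently with $[p_k-k+1]_q=0$.
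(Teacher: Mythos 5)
Your argument is correct and follows essentially the same route as the paper's proof: write $Y=MA_W$ for the reduced row echelon matrix $A_W$ of $W$, observe that the support condition $\SS_\alpha(\phi_C(j))>\SS_\alpha(c_i)$ confines the $k$th column of $M$ to a prefix of rows of size $p_k=r_{2k}(\SS_\alpha(T(C,D)))+k-1$, and count full-column-rank $M$ column by column to get $\prod_k(q^{p_k}-q^{k-1})$. The only point where you go beyond the paper is in explicitly verifying that the non-pivot columns of $Y$ impose no constraints beyond those already read off from the pivot columns; the paper leaves this step implicit.
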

\begin{proof}
  Since $Y=(y_{ij})$ has shape $C-\delta$ it follows that $y_{ij}=0$ unless $j>c_i-i$ which is equivalent to $\phi_C(j)>c_i$. Moreover, since $Y=X\Delta_2-\Delta_1X$, it follows that $x_{ij}(\Delta_2(j,j)-\Delta_1(i,i))=y_{ij}.$ Thus $y_{ij}=0$ whenever $\Delta_2(j,j)=\Delta_1(i,i)$, or equivalently, $\Delta(\phi_C(j),\phi_C(j))=\Delta(c_i,c_i).$ The last equality occurs if and only if $c_i$ and $\phi_C(j)$ lie in the same block of $A_\alpha$, which is equivalent to $\SS_\alpha(c_i)=\SS_\alpha(\phi_C(j))$. Combining the above observations, it follows that $y_{ij}=0$ unless $\SS_\alpha(\phi_C(j))>\SS_\alpha(c_i)$.
  
  Let $E$ denote the unique $\mu_2\times (n-\mu_1)$ matrix in reduced row echelon form whose rows span $W$. Then we can write $Y=AE$ for some $\mu_1 \times \mu_2 $ matrix $A=(a_{ij})$ of rank $\mu_2$. To count the number of possible choices for $Y$, we count the number of choices for $A$ by successively choosing its columns. For $1\leq i\leq \mu_1$, the $i$th row of $Y$ is given by 
  \begin{align*}
    Y_i=\sum_{j=1}^{\mu_2}a_{ij}E_j,
  \end{align*}
where $E_j$ is the $j$th row of $E$ for $j\geq 1$. If we write $D=(d_1,d_2,\ldots)$ then, since $E$ has pivots $D$, it follows that the first nonzero entry of $E_j$ is in position $d_j$. Now $a_{ij}\neq 0$ if and only if $y_{i, d_j}\neq 0$ which can only happen when $\SS_\alpha(\phi_C(d_j))>\SS_\alpha(c_i)$ (see Example \ref{eg:eta}). Therefore, the number of potentially nonzero elements in the $j$th column of $A$ is equal to 
  \begin{align*}
    \#\{i:\SS_\alpha(\phi_C(d_j))>\SS_\alpha(c_i) \}=r_{2j}(\SS_\alpha(T(C,D)))+j-1.
  \end{align*}
  Writing $r_{2j}$ for $r_{2j}(\SS_\alpha(T(C,D))) $, the number of choices for the first column of $A$ is given by $q^{r_{21}}-1$. The second column of $A$ is linearly independent of the first column and can be chosen in $q^{r_{22}+1}-q$ ways. More generally, once the first $j-1$ columns of $A$ have been chosen, the $j$th column can be chosen in $q^{r_{2j}+j-1}-q^{j-1}$ ways. It follows that the number of choices for $A$ is equal to
  \begin{align*}
    \prod_{j=1}^{\mu_2}(q^{r_{2j}+j-1}-q^{j-1})&=(q-1)^{\mu_2}q^{\mu_2 \choose 2}\prod_{j=1}^{\mu_2}[r_{2j}]_q\\
    &=(q-1)^{\mu_2}q^{\mu_2 \choose 2}r_q(\SS_\alpha(T(C,D))).\qedhere
  \end{align*}
\end{proof}
\begin{example}\label{eg:eta}
  Suppose $n=12$ while $\mu=(5,4,3)$ and $\alpha=(2,3,2,2,3)$. Let $C=(1,2,4,7,8)$ and $D=(1,2,3,5)$. In this case, 
  \begin{align*}
    T(C,D)=
    \begin{ytableau}
      1&2&4&7&8\\
      3&5&6&10 
    \end{ytableau}, \qquad
\SS_\alpha(T(C,D))=\begin{ytableau}
      1&1&2&3&4\\
      2&2&3&5 
    \end{ytableau}.
  \end{align*}
  Here $A$ is a $5\times 4$ matrix of the form  
  \begin{align*}
    A=
    \begin{pmatrix}
      * & * & * & *\\
      * & * & * & *\\
      0 & 0 & * & *\\
      0 & 0 & 0 & *\\
      0 & 0 & 0 & *
    \end{pmatrix}
  \end{align*}
  Observe that the number of zeros in the $i$th row of $A$ is equal to the number of entries in the second row of the tableau $\SS_\alpha(T(C,D))$ less than or equal to the $i$th entry in the first row. Therefore the number of potentially nonzero entries in successive rows of $A$ is weakly decreasing. The number of choices for $A$ having full column rank is therefore equal to
  \begin{align*}
 (q^2-1)(q^2-q)(q^3-q^2)(q^5-q^3)=  \eta_\alpha(C,D).
  \end{align*}
\end{example}

\begin{remark}
Note that $r_q(\SS_\alpha(T(C,D)))$ (and hence $\eta_\alpha(C,D)$) is zero whenever $\SS_\alpha(T(C,D))$ does not have increasing columns. In particular, this happens when $T(C,D)$ itself does not have increasing columns.
\end{remark}
For each pivot set $C$ and integer partition $\mu$, let $\sigma^C_n(\mu,\Delta)$ denote the number of subspaces with pivots $C$ and $\Delta$-profile $\mu$. In the following theorem we obtain a recursion satisfied by $\sigma^C_n(\mu,\Delta)$. 
\begin{theorem}\label{thm:recurrence}
Let $\mu$ be an integer partition and suppose $\Delta=\diag(a_1,\ldots,a_n)$ is a block-scalar matrix over $\Fq$ of type $\alpha$. For each pivot set $C\in C(n,\mu_1)$, let $\Delta_C$ denote the diagonal matrix defined by $\Delta_C=\diag(a_j:j\in [n]\setminus C)$. We have
  \begin{align*}
    \sigma^C_n(\mu,\Delta)=q^{\inv_{\alpha}(\bb(C))}\sum_{D\in C(n-\mu_1,\mu_2)} \eta_\alpha(C,D) \sigma^D_{n-\mu_1}(\tilde{\mu},\Delta_C),
  \end{align*}
  where $\tilde{\mu}=(\mu_2,\mu_3,\ldots)$. 
\end{theorem}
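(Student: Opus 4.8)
The plan is to parametrize the subspaces being counted and then transfer the profile condition to a lower-dimensional space through the commutator map $X\mapsto X\Delta_2-\Delta_1X$ that already drives Proposition~\ref{prop:invariant} and Lemma~\ref{lem:eta}. First I would fix $C\in C(n,\mu_1)$ and recall, as in Example~\ref{eg:shape} and the proof of Proposition~\ref{prop:invariant}, that every subspace $W$ with pivots $C$ corresponds bijectively to a unique matrix $X$ of shape $C-\delta$, with $W$ the row space of $(I\mid X)$ after the pivot columns are moved to the front; in these coordinates $\Delta$ becomes $\Delta_1\oplus\Delta_2$ with $\Delta_1=\diag(a_j:j\in C)$ and $\Delta_2=\Delta_C$. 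Writing $Y:=X\Delta_2-\Delta_1X$ and letting $W'\subseteq\Fq^{n-\mu_1}$ be its row space, the goal is to show that $W$ has $\Delta$-profile $\mu$ if and only if $W'$ has $\Delta_C$-profile $\tilde\mu=(\mu_2,\mu_3,\ldots)$.

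The heart of the argument, and the step I expect to be the main obstacle, is the dimension identity
\begin{align*}
\dim(W+\Delta W+\cdots+\Delta^{j-1}W)=\mu_1+\dim(W'+\Delta_C W'+\cdots+\Delta_C^{\,j-2}W')\qquad(j\geq 1),
\end{align*}
where the second term on the right is empty (hence $0$) when $j=1$. To prove it I would stack the matrices $(\Delta_1^k\mid X\Delta_2^k)$ spanning $\Delta^kW$ for $0\le k\le j-1$ and row-reduce against the top block $(I\mid X)$, which replaces the $k$-th block by $(0\mid X\Delta_2^k-\Delta_1^kX)$; thus the rank equals $\mu_1$ plus the dimension of the row space of the matrices $Z_k:=X\Delta_2^k-\Delta_1^kX$ for $1\le k\le j-1$. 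The key algebraic input is the commutator expansion $Z_k=\sum_{i=0}^{k-1}\Delta_1^iY\Delta_2^{\,k-1-i}$ together with the observation that left multiplication by the diagonal matrix $\Delta_1$ merely scales rows and so cannot enlarge a row space. Hence each summand $\Delta_1^iY\Delta_2^{\,k-1-i}$ has row space inside $\Delta_2^{\,k-1-i}W'$, and an induction shows that the row space of $Z_1,\ldots,Z_m$ is exactly $W'+\Delta_C W'+\cdots+\Delta_C^{\,m-1}W'$. The identity then forces the partial sums of $\mu$ to match those of $(\mu_1,\tilde\mu)$ precisely when $W'$ has $\Delta_C$-profile $\tilde\mu$, establishing the claim.

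With the claim in hand the count assembles cleanly. I would group the matrices $X$ of shape $C-\delta$ by the derived subspace $W'$ and its pivot set $D\in C(n-\mu_1,\mu_2)$. For a fixed $W'$ with pivots $D$, Lemma~\ref{lem:eta} gives exactly $\eta_\alpha(C,D)$ matrices $Y$ of the form $X\Delta_2-\Delta_1X$ with row space $W'$; since each such $Y$ lies in the image of the commutator map, Corollary~\ref{cor:fiber} gives that it has exactly $q^{\inv_\alpha(\bb(C))}$ preimages $X$. Therefore the number of $X$ with derived subspace $W'$ equals $\eta_\alpha(C,D)\,q^{\inv_\alpha(\bb(C))}$, a quantity depending only on $C$ and $D$. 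Summing over the $\sigma^D_{n-\mu_1}(\tilde\mu,\Delta_C)$ subspaces $W'$ with pivots $D$ and $\Delta_C$-profile $\tilde\mu$, then over $D$, and using the claim to identify these $X$ with the subspaces enumerated by $\sigma^C_n(\mu,\Delta)$, yields
\begin{align*}
\sigma^C_n(\mu,\Delta)=q^{\inv_\alpha(\bb(C))}\sum_{D\in C(n-\mu_1,\mu_2)}\eta_\alpha(C,D)\,\sigma^D_{n-\mu_1}(\tilde\mu,\Delta_C).
\end{align*}
The degenerate cases $\mu_2=0$ (where $W'=0$ and $D=\emptyset$) and small $j$ are absorbed by the usual empty-sum conventions.
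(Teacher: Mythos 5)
Your proposal is correct and follows essentially the same route as the paper: parametrize $W$ by the matrix $X$ of shape $C-\delta$, pass to $Y=X\Delta_2-\Delta_1X$ via the commutator expansion $X\Delta_2^k-\Delta_1^kX=\sum_{i=0}^{k-1}\Delta_1^iY\Delta_2^{k-1-i}$, and then combine Lemma~\ref{lem:eta} with Corollary~\ref{cor:fiber}. Your row-space induction (using $Z_m-\Delta_1Z_{m-1}=Y\Delta_2^{m-1}$) is just a repackaging of the paper's block row operations $R_i\to R_i-\Delta_1R_{i-1}$, so the two arguments coincide in substance.
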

\begin{proof}
  Let $W$ be a subspace of $\Fq^n$ with pivots $C=(c_1,\ldots,c_{\mu_1})$. Consider the $\mu_1\times n$ matrix in reduced row echelon form whose rows span $W$ and move its pivot columns to the left to obtain a matrix $A=(I \mid X)$ where $X$ is a $\mu_1\times (n-\mu_1)$ matrix of shape $C-\delta$. The rows of $A$ span a subspace $\widetilde{W}\subseteq \Fq^n$. Now $W$ has $\Delta$-profile $\mu$ if and only if $\widetilde{W}$ has $\tilde{\Delta}$-profile $\mu$ where $\tilde{\Delta}$ is the diagonal matrix given by 
  \begin{align*}
    \tilde{\Delta}=
    \begin{pmatrix}
      \Delta_1& {\bf 0}\\
     {\bf 0}&\Delta_2
    \end{pmatrix}, 
  \end{align*}
  with $\Delta_1=\diag(a_i:i\in C)$ and $\Delta_2=\diag(a_i:i\in [n]\setminus C)$. Write $\mu=(\mu_1,\mu_2,\ldots)$ and observe that $\widetilde{W}$ has $\tilde{\Delta}$-profile $\mu$ precisely when the block matrix
  \begin{align*}
 \begin{pmatrix}
      I & X\\
      \Delta_1 & X\Delta_2\\
      \vdots &\vdots\\
      \Delta_1^{j-1} & X\Delta_2^{j-1}
      \end{pmatrix}
  \end{align*}
  has rank $\mu_1+\cdots+\mu_j$ for each $j\geq 1$. Applying the block row operation $R_i\to R_i-\Delta_1^{i-1}R_1$ successively for $i=2,3,\ldots,j,$ we obtain the matrix
  \begin{align*}
  \begin{pmatrix}
      I & X\\
      {\bf 0} & X\Delta_2-\Delta_1X\\
      \vdots &\vdots\\
      {\bf 0} & X\Delta_2^{j-1}-\Delta_1^{j-1}X
      \end{pmatrix}.
  \end{align*}
  The rank condition above implies that the matrix
  \begin{align}\label{eq:block}
  \begin{pmatrix}
    X\Delta_2-\Delta_1X\\
\vdots\\
X\Delta_2^{j-1}-\Delta_1^{j-1}X
      \end{pmatrix}
  \end{align}
  has rank $\mu_2+\cdots+\mu_j$ for each $j\geq 2$. If $Y=X\Delta_2-\Delta_1X$, then it is easily verified that
  \begin{align*}
        X\Delta_2^i - \Delta_1^i X = Y\Delta_2^{i-1} + \Delta_1Y\Delta_2^{i-2} + \cdots + \Delta_1^{i-2}Y\Delta_2 + \Delta_1^{i-1}Y.
  \end{align*}
Thus the matrix \eqref{eq:block} can be written as 
  \begin{align*}
    \begin{pmatrix}
      Y\\
      Y\Delta_2+\Delta_1Y\\
      \vdots\\
      Y\Delta_2^{j-2} + \Delta_1Y\Delta_2^{j-3} + \cdots + \Delta_1^{j-3}Y\Delta_2 + \Delta_1^{j-2}Y
    \end{pmatrix}.
  \end{align*}
Now apply the block row operation $R_i\to R_i- \Delta_1R_{i-1}$ successively for $i=j-1,j-2,\ldots,2$ to the matrix above to obtain
  \begin{align*}
    \begin{pmatrix}
      Y\\
      Y\Delta_2\\
      \vdots\\
      Y\Delta_2^{j-2}
      \end{pmatrix}.
  \end{align*}
  The rank condition implies that the matrix above has rank $\mu_2+\cdots+\mu_j$ for $j\geq 2$. It follows that the row space of $Y$ has $\Delta_2$-profile $\tilde{\mu}=(\mu_2,\mu_3,\ldots)$. Since $X$ has shape $C-\delta$, it follows that $Y$ also has shape $C-\delta$. By Corollary \ref{cor:fiber}, the number of $\mu_1\times (n-\mu_1)$ matrices $X$ of shape $C-\delta$ such that $X\Delta_2-\Delta_1X=Y$ for a fixed matrix $Y$ is equal to $q^{\inv_\alpha(\bb(C))}.$ Consider a pivot set $D\in C(n-\mu_1,\mu_2).$ Lemma \ref{lem:eta} implies that the number of $\mu_1\times (n-\mu_1)$ matrices $Y$ of shape $C-\delta$ and of the form $Y=X\Delta_2-\Delta_1X$ whose rows span a given subspace of $\Fq^{n-\mu_1}$ with pivots $D$ is given by $\eta_\alpha(C,D)$. Putting these observations together, it follows that
  \begin{align*}
        \sigma^C_n(\mu,\Delta)=q^{\inv_{\alpha}(\bb(C))}\sum_{D\in C(n-\mu_1,\mu_2)} \eta_\alpha(C,D) \sigma^D_{n-\mu_1}(\tilde{\mu},\Delta_2),
  \end{align*}
 where $\Delta_2=\diag(a_j:j\in [n]\setminus C)=\Delta_C$. This completes the proof.
\end{proof}
The recursion in the theorem above can be solved to express $\sigma_n^C(\mu,\Delta)$ as a sum over a class of tableaux which we now define. Given an integer partition $\mu$, a \emph{multilinear tableau} of shape $\mu$ is a filling of the Young diagram of $\mu$ with positive integers such that each integer appears at most once. Let ${\rm Tab}_{\subseteq [n]}(\mu)$ denote the set of all multilinear tableaux of shape $\mu$ with increasing rows and columns and with entries in $[n]$. We have the following extension of Definition \ref{def:tabtoword}.
\begin{definition}\label{def:generalword}
To each tableau $T\in \tabnmu$ with $r$ rows, we associate a word 
\begin{align*}
  w(T):=a_1a_2\cdots a_n,
\end{align*}
where
\begin{align*}
a_i=\begin{cases}
      j & \mbox{ if the entry $i$ appears in row $j$ of } T\\
      r+1 & \mbox{ otherwise. }
\end{cases}
\end{align*}
  \end{definition}
For instance, when $n=8$, $\mu=(3,2)$ and
\begin{align*}
T=\begin{ytableau}
  1& 3 & 5\\
    2 & 6
  \end{ytableau},
\end{align*}
we have $w(T)=12131233.$
\begin{definition}
The number of \emph{non-inversions} of a word $w=w_1w_2\cdots w_n$ is defined as the number of pairs $1\leq i<j\leq n$ such that $w_i<w_j$.  Denote by $\ninv(w)$ the number of non-inversions of $w$. 
\end{definition}
It is well-known that $\ninv$ is a Mahonian statistic. 

\begin{theorem}\label{thm:sigmac}
  Let $\Delta$ be a block-scalar $n\times n$ matrix of type $\alpha$ and suppose $\mu$ is an integer partition. For each pivot set $C\in C(n,\mu_1)$,
  \begin{align*}
    \sigma_n^C(\mu,\Delta)=(q-1)^{\sum_{j\geq 2}\mu_j}q^{\sum_{j\geq 2}{\mu_j \choose 2}} \sum_{\substack{T\in \tabnmu \\ T \text{ has first row } C}} q^{\ninv_\alpha(w(T))}r_q(\salpha(T)). 
  \end{align*}
\end{theorem}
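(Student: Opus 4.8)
The plan is to argue by induction on the number of parts $\ell(\mu)$, using the recursion of Theorem~\ref{thm:recurrence} as the engine. For the base case $\ell(\mu)=1$, a subspace has $\Delta$-profile $(\mu_1)$ precisely when it is $\Delta$-invariant, so Proposition~\ref{prop:invariant} gives $\sigma_n^C((\mu_1),\Delta)=q^{\inv_\alpha(\bb(C))}$; on the other side the only tableau of shape $(\mu_1)$ with first row $C$ is the single row $C$, for which $r_q(\salpha(T))=1$ (empty product) and $w(T)$ carries the letter $1$ exactly at the positions of $C$ and the letter $2$ elsewhere. Hence a block non-inversion of $w(T)$ is a pair $i<i'$ in a common block with $i\in C$, $i'\notin C$, which is exactly an inversion of $\bb(C)$, so $\ninv_\alpha(w(T))=\inv_\alpha(\bb(C))$ and both sides agree.

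For the inductive step, write $\Delta_C=\diag(a_j:j\in[n]\setminus C)$ and let $\gamma$ be its type. Removing entries from a block-scalar matrix preserves contiguity of equal runs, so $\Delta_C$ is again block-scalar and the inductive hypothesis applies to $\sigma^D_{n-\mu_1}(\tilde\mu,\Delta_C)$ with $\tilde\mu=(\mu_2,\mu_3,\ldots)$. Substituting the formula of Lemma~\ref{lem:eta} for $\eta_\alpha(C,D)$ and the inductive expression for $\sigma^D_{n-\mu_1}(\tilde\mu,\Delta_C)$ into the recursion, the scalar prefactors combine as $(q-1)^{\mu_2}(q-1)^{\sum_{j\geq 3}\mu_j}=(q-1)^{\sum_{j\geq 2}\mu_j}$ and $q^{{\mu_2 \choose 2}}q^{\sum_{j\geq 3}{\mu_j \choose 2}}=q^{\sum_{j\geq 2}{\mu_j \choose 2}}$, matching the target prefactor.

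It then remains to identify the double sum over $D\in C(n-\mu_1,\mu_2)$ and over tableaux $T''$ of shape $\tilde\mu$ (on $[n-\mu_1]$, first row $D$) with the single sum over $T\in\tabnmu$ with first row $C$. The correspondence strips the first row $C$ from $T$ and relabels the remaining entries (which lie in $[n]\setminus C$) through the order-preserving bijection $\phi_C^{-1}$, producing $T''$ with first row $D:=\phi_C^{-1}(\text{second row of }T)$. The decisive structural fact is that $\phi_C^{-1}$ carries each nonempty block $A_i\setminus C$ onto the corresponding block of $\AA_\gamma$ while preserving the order of blocks, so for $x,y\in[n]\setminus C$ the values $\salpha(x),\salpha(y)$ and $\SS_\gamma(\phi_C^{-1}(x)),\SS_\gamma(\phi_C^{-1}(y))$ are in the same relative order (and coincide simultaneously). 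Granting this, I would verify the two identities
\begin{align*}
r_q(\salpha(T))&=r_q(\salpha(T(C,D)))\cdot r_q(\SS_\gamma(T'')),\\
\ninv_\alpha(w(T))&=\inv_\alpha(\bb(C))+\ninv_\gamma(w(T'')).
\end{align*}
The first holds because the $i=2$ factor of $r_q(\salpha(T))$ compares only the first two rows of $T$ and so equals $r_q(\salpha(T(C,D)))$, while each factor with $i\geq 3$ compares rows $i-1,i$ of $T$, which become rows $i-2,i-1$ of $T''$ with the $\salpha$-order replaced by the order-equivalent $\SS_\gamma$-order. The second is obtained by splitting the block non-inversions of $w(T)$ according to whether the smaller index lies in $C$: pairs with smaller index in $C$ and larger index outside $C$ contribute exactly $\inv_\alpha(\bb(C))$ (each is automatically a non-inversion, since the letter $1$ at a $C$-position precedes a larger letter), and the pairs with both indices outside $C$ reproduce $\ninv_\gamma(w(T''))$ after relabeling by $\phi_C^{-1}$ and subtracting $1$ from every letter; it is precisely the use of $\ninv$ rather than $\inv$ on the tableau side that makes the $C$-contribution collapse to $\inv_\alpha(\bb(C))$.

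Finally I would reconcile the supports of the two sums. Terms with $r_q(\salpha(T))=0$ contribute nothing on the right, and by the Remark following Example~\ref{eg:eta} these are exactly the pairs $(D,T'')$ for which some column of $\salpha(T(C,D))$ fails to increase; for such pairs $\eta_\alpha(C,D)=0$, so they contribute nothing to the recursion either. Conversely a nonvanishing $r_q(\salpha(T(C,D)))$ forces $\salpha(\phi_C(d_j))>\salpha(c_j)$, hence $\phi_C(d_j)>c_j$, so the reconstructed $T$ has strictly increasing columns (the conditions within rows $\geq 2$ coming from validity of $T''$, the condition between rows $1$ and $2$ from this inequality) and is a genuine element of $\tabnmu$. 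Combined with the two displayed identities, the surviving terms match bijectively, completing the induction. The main obstacle is the bookkeeping around $\phi_C$: proving the order-equivalence of $\salpha$ and $\SS_\gamma$ (including the case where an entire block of $\AA_\alpha$ is absorbed into $C$ and the block indices shift) and using it to establish the additivity of $\ninv$, which is the genuinely new combinatorial identity underlying the argument.
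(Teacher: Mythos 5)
Your proposal is correct and follows essentially the same route as the paper: induction on $\ell(\mu)$ with Theorem~\ref{thm:recurrence} as the engine, the base case via Proposition~\ref{prop:invariant}, and the same two key identities (multiplicativity of $r_q$ under stripping the first row and additivity $\ninv_\alpha(w(T))=\inv_\alpha(\bb(C))+\ninv_{\tilde\alpha}(w(\tilde T))$), with the order-equivalence of $\salpha\circ\phi_C$ and $\SS_{\tilde\alpha}$ playing the role that the paper's padded composition $\hat\alpha$ plays. Your explicit treatment of the vanishing terms and of blocks wholly absorbed into $C$ is a welcome elaboration of points the paper passes over quickly, but it is not a different argument.
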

\begin{proof}
  Induct on the number of parts $\ell$ of $\mu$. When $\ell=1$, we have $\mu=(m)$ for some positive integer $m$. By Proposition \ref{prop:invariant}, we have $\sigma_n^C((m),\Delta)=q^{\inv_\alpha(\bb(C))}$. If $T$ denotes the tableau with a single row whose entries are given by $C$, then $w(T)$ can be obtained from $\bb(C)$ by replacing all the zeros by $2$. It follows that $\inv_\alpha(\bb(C))=\ninv_\alpha(w(T))$. Moreover, since $r_q(\salpha(T))=1,$ the theorem holds for $\ell=1$.

  Suppose the result holds when $\mu$ has at most $\ell-1$ parts. Let $\mu=(\mu_1,\ldots,\mu_\ell)$ and suppose $\Delta=\diag(a_1,\ldots,a_n)$ is a block-scalar matrix of type $\alpha=(\alpha_1,\ldots,\alpha_r)$. By Theorem~\ref{thm:recurrence}, 
  \begin{align*}
      \sigma^C_n(\mu,\Delta)=q^{\inv_{\alpha}(\bb(C))}\sum_{D\in C(n-\mu_1,\mu_2)} \eta_\alpha(C,D) \sigma^D_{n-\mu_1}(\tilde{\mu},\tilde{\Delta}),
  \end{align*}
  where $\tilde{\mu}=(\mu_2,\mu_3,\ldots)$ and $\tilde{\Delta}$ is the block-scalar matrix $\diag(a_j:j\in [n]\setminus C)$. Let $\tilde{\alpha}$ denote the type of $\tilde{\Delta}$. By the induction hypothesis, 
  \begin{align*}
    \sigma^D_{n-\mu_1}(\tilde{\mu},\tilde{\Delta})=(q-1)^{\sum_{j\geq 3}\mu_j}q^{\sum_{j\geq 3}{\mu_j \choose 2}} \sum_{\substack{\tilde{T}\in {\rm Tab}_{\subseteq [n-\mu_1]}(\tilde{\mu}) \\ \tilde{T} \text{ has first row } D}} q^{\ninv_{\tilde{\alpha}}(w(\tilde{T}))} r_q(\SS_{\tilde\alpha}(\tilde{T})).
  \end{align*}
  Using the fact that
    \begin{align*}
    \eta_\alpha(C,D)=(q-1)^{\mu_2}q^{\mu_2 \choose 2}r_q(\SS_\alpha(T(C,D))),
  \end{align*}
  we obtain
  \begin{align}
    \sigma^C_n(\mu,\Delta)&=(q-1)^{\sum_{j\geq 2}\mu_j}\cdot q^{{\inv_{\alpha}(\bb(C))}+\sum_{j\geq 2}{\mu_j \choose 2}}\times  \label{eq:bigone}\\ 
    &\quad\sum_{D\in C(n-\mu_1,\mu_2)}r_q(\SS_\alpha(T(C,D))) \sum_{\substack{\tilde{T}\in {\rm Tab}_{\subseteq [n-\mu_1]}(\tilde{\mu}) \\ \tilde{T} \text{ has first row } D}} q^{\ninv_{\tilde{\alpha}}(w(\tilde{T}))} r_q(\SS_{\tilde\alpha}(\tilde{T})). \nonumber
  \end{align}
  Let $\phi_C:\PP\to \PP\setminus C$ denote the unique order preserving bijection. For each tableau $\tilde{T}\in {\rm Tab}_{\subseteq [n-\mu_1]}(\tilde{\mu}),$ let $\phi_C(\tilde{T})$ denote the tableau obtained by replacing each entry $x$ in $\tilde{T}$ by $\phi_C(x)$. Thus, $\phi_C$ induces a map (also denoted $\phi_C$ by a slight abuse of notation):
  \begin{align*}
    \phi_C:{\rm Tab}_{\subseteq [n-\mu_1]}(\tilde{\mu})\to {\rm Tab}_{\subseteq [n]\setminus C}(\tilde{\mu}).
  \end{align*}
  Consider the weak composition $\hat{\alpha}=(\hat{\alpha}_1,\ldots,\hat{\alpha}_r)$ obtained by padding $\tilde{\alpha}$ with zeros (if necessary) such that $\alpha_j$ and $\hat{\alpha}_j$ correspond to the multiplicities of the same element of $\Fq$ for $1\leq j\leq r$. Let $\SS_{\hat{\alpha}}(x)$ denote the least integer $i$ for which $x\leq \hat{\alpha}_1+\cdots+\hat{\alpha}_i$. We claim that $\SS_{\hat{\alpha}}(x)=\salpha(\phi_C(x))$ for each $x\in [n-\mu_1]$. This follows from the easily verified fact that $x\leq \hat{\alpha}_1+\cdots+\hat{\alpha}_i$ if and only if $\phi_C(x)\leq \alpha_1+\cdots+\alpha_i$ for each $i\geq 1$. Therefore, $\SS_{\hat{\alpha}}(\tilde{T})=\salpha(\phi_C(\tilde{T})).$ We may not have $\SS_{\tilde{\alpha}}(x)=\SS_{\hat{\alpha}}(x)$ but, since $\hat{\alpha}$ is obtained by padding $\tilde{\alpha}$ with zeros, it is easily seen that $\SS_{\tilde{\alpha}}(x)<\SS_{\tilde{\alpha}}(y)$ if and only if $\SS_{\hat{\alpha}}(x)<\SS_{\hat{\alpha}}(y).$ Therefore, $r_q(\SS_{\tilde{\alpha}}(\tilde{T}))=r_q(\SS_{\hat{\alpha}}(\tilde{T}))$ for each tableau $\tilde{T}\in {\rm Tab}_{\subseteq [n-\mu_1]}(\tilde{\mu}).$

  Since $\inv_\alpha(\bb(C))$ equals the value taken by $\ninv_\alpha$ on the word corresponding to a tableau with a single row with entries $C$, it can be seen that
  \begin{align*}
    \inv_\alpha(\bb(C))+\ninv_{\tilde{\alpha}}(w(\tilde{T}))=\ninv_\alpha(w(T)),
  \end{align*}
  where $T=[C,\phi_C(\tilde{T})]$ denotes the tableau whose first row is $C$ and subsequent rows are those of $\phi_C(\tilde{T})$. Since the rows of $T(C,D)$ are $C$ and $\phi_C(D)$, it follows that
  \begin{align*}
    r_q(\salpha(T(C,D))) \cdot r_q(\SS_{\tilde{\alpha}}(\tilde{T}))&=   r_q(\salpha(T(C,D))) \cdot r_q(\SS_{\hat{\alpha}}(\tilde{T})) \\
    &= r_q(\salpha(T(C,D))) \cdot r_q(\SS_{\alpha}(\phi_C(\tilde{T})))\\
    &=r_q(\salpha([C,\phi_C(\tilde{T})])), 
  \end{align*}
whenever the first row of $\tilde{T}$ is equal to $D$. Abbreviating $[C,\phi_C(\tilde{T})]$ to $T$, we have by \eqref{eq:bigone} 
  \begin{align*}
    \sigma^C_n(\mu,\Delta)&=(q-1)^{\sum_{j\geq 2}\mu_j}\cdot q^{\sum_{j\geq 2}{\mu_j \choose 2}} \sum_{D\in C(n-\mu_1,\mu_2)} \sum_{\substack{\tilde{T}\in {\rm Tab}_{\subseteq [n-\mu_1]}(\tilde{\mu}) \\ \tilde{T} \text{ has first row } D}} q^{\ninv_{\alpha}(w(T))} r_q(\salpha(T))\\
                          &=(q-1)^{\sum_{j\geq 2}\mu_j}\cdot q^{\sum_{j\geq 2}{\mu_j \choose 2}}  \sum_{\tilde{T}\in {\rm Tab}_{\subseteq [n-\mu_1]}(\tilde{\mu})} q^{\ninv_{\alpha}(w(T))} r_q(\salpha(T))\\
    &=(q-1)^{\sum_{j\geq 2}\mu_j}\cdot q^{\sum_{j\geq 2}{\mu_j \choose 2}}\sum_{T'\in {\rm Tab}_{\subseteq [n]\setminus C}(\tilde{\mu})} q^{\ninv_{\alpha}(w(\hat{T}))} r_q(\salpha(\hat{T})),
  \end{align*}
  where $\hat{T}$ is the tableau whose first row is $C$ and other rows are those of $T'.$ It follows that
  \begin{align*}
       \sigma_n^C(\mu,\Delta)&=(q-1)^{\sum_{j\geq 2}\mu_j}q^{\sum_{j\geq 2}{\mu_j \choose 2}} \sum_{\substack{T\in \tabnmu \\ T \text{ has first row } C}} q^{\ninv_\alpha(w(T))}r_q(\salpha(T)),
  \end{align*}
  completing the proof.
\end{proof}
\begin{definition}
Given a composition $\alpha$ and a partition $\mu$, let $\tabnmualpha$ denote the set of all tableaux $T\in \tabnmu$ such that no column of $T$ contains more than one element of the canonical set partition $\AA_\alpha$.    
\end{definition}

\begin{corollary}\label{cor:sigmudelt}
  If $\Delta$ is a diagonal matrix of type $\alpha$ and $\mu$ is an integer partition, then the number of subspaces with $\Delta$-profile $\mu$ is given by
  \begin{align*}
    \sigma(\mu,\Delta)=(q-1)^{\sum_{j\geq 2}\mu_j}q^{\sum_{j\geq 2}{\mu_j \choose 2}}\sum_{T\in \tabnmualpha} q^{\ninv_\alpha(w(T))}r_q(\salpha(T)).
  \end{align*}
\end{corollary}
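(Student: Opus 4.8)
The plan is to obtain the corollary directly from Theorem~\ref{thm:sigmac} by summing over all pivot sets, after first reducing the general diagonal case to the block-scalar case. The starting observation is that every subspace with $\Delta$-profile $\mu$ has dimension $\mu_1$ (take $j=1$ in Definition~\ref{def:profile}), so its pivot set lies in $C(n,\mu_1)$. Hence, assuming for the moment that $\Delta$ is block-scalar of type $\alpha$, summing the identity of Theorem~\ref{thm:sigmac} over all $C\in C(n,\mu_1)$ gives
\begin{align*}
  \sigma(\mu,\Delta)&=\sum_{C\in C(n,\mu_1)}\sigma_n^C(\mu,\Delta)\\
  &=(q-1)^{\sum_{j\geq 2}\mu_j}q^{\sum_{j\geq 2}{\mu_j \choose 2}}\sum_{C}\sum_{\substack{T\in \tabnmu\\ T\text{ has first row }C}} q^{\ninv_\alpha(w(T))}r_q(\salpha(T)).
\end{align*}
Since each tableau in $\tabnmu$ has a unique first row, the double sum collapses to a single sum over all $T\in\tabnmu$.

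First I would dispose of the reduction to the block-scalar case. Given an arbitrary diagonal matrix $\Delta$ of type $\alpha=(\alpha_1,\ldots,\alpha_k)$, let $c_i$ denote the distinct common diagonal value on the $i$th block of $\AA^\Delta$, and form the block-scalar matrix $\Delta'$ whose $i$th contiguous block $A_i$ of $\AA_\alpha$ carries the value $c_i$. Then $\Delta'$ has exactly the same multiset of diagonal entries as $\Delta$, so it is permutation similar to $\Delta$; by the conjugacy invariance of subspace profiles recorded in the introduction, $\sigma(\mu,\Delta)=\sigma(\mu,\Delta')$. Moreover $\Delta'$ is block-scalar of type $\alpha$ with $\AA^{\Delta'}=\AA_\alpha$, so Theorem~\ref{thm:sigmac} applies to $\Delta'$ verbatim and the display above is justified.

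It then remains to restrict the summation from $\tabnmu$ to $\tabnmualpha$. If $T\in\tabnmu$ has two entries of a single column lying in the same block of $\AA_\alpha$, then $\salpha$ sends both entries to the same value, so the corresponding column of $\salpha(T)$ is not strictly increasing; as already noted for tableaux with non-increasing columns, and directly from Definition~\ref{def:rqt} (where such a column forces some $r_{ij}=0$, hence $[r_{ij}]_q=0$), this gives $r_q(\salpha(T))=0$. Thus every $T\in\tabnmu\setminus\tabnmualpha$ contributes zero, the sum over $\tabnmu$ equals the sum over $\tabnmualpha$, and the asserted formula follows. The computation is routine once Theorem~\ref{thm:sigmac} is in hand; the one point requiring genuine care is the block-scalar reduction, where one must produce a block-scalar matrix conjugate to $\Delta$ having the \emph{same} type $\alpha$ (so that its canonical set partition is precisely $\AA_\alpha$), thereby matching the statistics $\ninv_\alpha$, $\salpha$ and the index set $\tabnmualpha$ that appear on the right-hand side to the prescribed type.
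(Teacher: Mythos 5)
Your argument is correct and follows the paper's own proof: sum Theorem~\ref{thm:sigmac} over all pivot sets $C\in C(n,\mu_1)$, note that $r_q(\salpha(T))=0$ for $T\in\tabnmu\setminus\tabnmualpha$ (since a repeated column value forces some $r_{ij}=0$), and reduce the general diagonal case to the block-scalar case by permutation similarity. Your elaboration of the block-scalar reduction and of why the vanishing occurs is a correct filling-in of details the paper leaves implicit.
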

\begin{proof}
If $\Delta$ is block-scalar, the result follows from Theorem \ref{thm:sigmac} by summing over all possible pivot sets $C$ since $r_q(\salpha(T))=0$ for $T\in \tabnmu \setminus \tabnmualpha$. The result now follows for arbitrary diagonal matrices since each such matrix is similar to a block-scalar matrix.
\end{proof}
We will see that the expression obtained for $\sigma(\mu,\Delta)$ above can be expressed more compactly as a sum over certain semistandard tableaux.
\begin{definition}
Given a composition $\alpha=(\alpha_1,\alpha_2,\ldots)$ of $n$, let ${\rm SSTab(\mu,\alpha)}$ denote the set of all fillings of the Young diagram of $\mu$ by submultisets of $\{1^{\alpha_1},2^{\alpha_2},\ldots\}$ such that the rows are weakly increasing from left to right while the columns are strictly increasing from top to bottom.   
\end{definition}

\begin{example}
  If $\mu=(2,1)$ and $\alpha=(2,1,2),$ then $\sstabmua$ consists of column-strict fillings of the Young diagram of $(2,1)$ with a submultiset of $\{1,1,2,3,3\}$. There are six such tableaux:
  \begin{align*}
    \begin{ytableau}
      1 & 1\\
      2
    \end{ytableau}, 
        \begin{ytableau}
      1 & 1\\
      3
    \end{ytableau}, 
    \begin{ytableau}
      1 & 2\\
      3
    \end{ytableau}, 
    \begin{ytableau}
      1 & 3\\
      2
    \end{ytableau}, 
    \begin{ytableau}
      1 & 3\\
      3
    \end{ytableau},
        \begin{ytableau}
      2 & 3\\
      3
    \end{ytableau}.
  \end{align*}
\end{example}
\begin{definition}
For $T\in \sstabmua$, define
\begin{align*}
  s_q^{\alpha}(T):=\prod_{i\geq 1}{\alpha_i \brack \alpha_i-|\beta^i|,\beta^i_{1},\beta^i_{2},\ldots}_q,
\end{align*}
where $\beta^i$ and $\beta^i_j$ are as in Definition \ref{def:bit}.    
\end{definition}
Note that when $|\mu|=|\alpha|$, we have $\alpha_i=|\beta^i|$; therefore $s_q^\alpha(T)$ is simply $s_q(T)$ as in Definition~\ref{def:sqt}. We now extend Definition \ref{def:bmualpha} for  $b_{\mu\alpha}(q)$ to all partitions $\mu$. 
\begin{definition}\label{def:bmualphaall}
  Define
  \begin{align*}
    b_{\mu\alpha}(q):=\sum_{T\in \sstabmua}r_q(T)s_q^\alpha(T).
  \end{align*}
\end{definition}

Theorem \ref{thm:profilesviassyt} gives a formula for the number of subspaces with a given profile in terms of the polynomials $b_{\mu\alpha}(q)$. The theorem rests on the following lemma.  
\begin{lemma}\label{lem:sqalpha}
 For each composition $\alpha$  of $n$, and each tableau $T\in \sstabmua$, 
  \begin{align*}
    \sum_{\substack{\hat{T}\in \tabnmualpha\\ \salpha(\hat{T})=T} }q^{\ninv_\alpha(\hat{T})}=s_q^{\alpha}(T).
  \end{align*}
\end{lemma}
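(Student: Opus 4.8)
The plan is to generalize Proposition~\ref{prop:bijection} to the multilinear setting by adjoining a ``phantom row'' that records the elements of each block $A_i$ of $\AA_\alpha$ which do not occur in $\hat T$. Write $\AA_\alpha=\cup_{i=1}^kA_i$ as in Definition~\ref{def:canonical}, let $r=\ell(\mu)$, and for each $i$ set $\beta^i=\beta^i(T)$ and define the composition $\tilde\beta^i:=(\beta^i_1,\ldots,\beta^i_r,\alpha_i-|\beta^i|)$ of $\alpha_i$. The central map sends $\hat T\in\tabnmualpha$ with $\salpha(\hat T)=T$ to the tuple $g_\alpha(w(\hat T))=(w_1,\ldots,w_k)$, where $w_i$ is the subword of $w(\hat T)$ indexed by the positions lying in $A_i$. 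I would first check that $w_i\in R(\tilde\beta^i)$: since $\salpha(\hat T)=T$, exactly $\beta^i_j$ elements of $A_i$ lie in row $j$ of $\hat T$ (each contributing the letter $j$), while the remaining $\alpha_i-|\beta^i|$ elements of $A_i$ are absent from $\hat T$ and hence contribute the letter $r+1$.

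Next I would show that $\hat T\mapsto(w_1,\ldots,w_k)$ is a bijection from the fiber $\{\hat T\in\tabnmualpha:\salpha(\hat T)=T\}$ onto $R(\tilde\beta^1)\times\cdots\times R(\tilde\beta^k)$. Injectivity is immediate, since $\hat T$ is recovered from $w(\hat T)$ by listing, in each row, the elements assigned to that row in increasing order. For surjectivity I would invert the map as in Proposition~\ref{prop:bijection}: a tuple $(w_1,\ldots,w_k)$ prescribes, for each $i$, a distribution of the elements of $A_i$ among the $r$ rows (and the phantom row), and filling the rows of the shape $\mu$ in increasing order produces a candidate tableau. The step I expect to be the main obstacle is verifying that this candidate genuinely lies in $\tabnmualpha$ and satisfies $\salpha(\hat T)=T$ --- that is, that its columns are strictly increasing and that no column meets a block of $\AA_\alpha$ twice. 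This is exactly where the column-strictness of $T$ is essential. Because the rows of $T$ are weakly increasing and the blocks $A_1<A_2<\cdots$ are intervals of consecutive integers ordered increasingly, the $c$th cell of row $j$ of $\hat T$ receives an element of $A_{T_{j,c}}$; thus if cells $(j,c)$ and $(j+1,c)$ of $T$ carry symbols $s<s'$, the corresponding entries of $\hat T$ lie in $A_s$ and $A_{s'}$, and every element of $A_s$ is smaller than every element of $A_{s'}$. Hence the columns of $\hat T$ are strictly increasing, and the symbols in each column being distinct, no column contains two elements of the same block.

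Finally, the bijection is weight preserving by construction, as $\ninv_\alpha(w(\hat T))=\sum_{i=1}^k\ninv(w_i)$ directly from the definition of $\ninv_\alpha$. Since $\ninv$ is Mahonian, summing over the fiber and factoring the resulting sum over the Cartesian product gives
\begin{align*}
  \sum_{\substack{\hat T\in\tabnmualpha\\ \salpha(\hat T)=T}}q^{\ninv_\alpha(\hat T)}
  =\prod_{i=1}^k\sum_{w_i\in R(\tilde\beta^i)}q^{\ninv(w_i)}
  =\prod_{i\geq 1}{\alpha_i\brack\alpha_i-|\beta^i|,\beta^i_1,\beta^i_2,\ldots}_q
  =s_q^\alpha(T),
\end{align*}
which is the asserted identity. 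I note that the same argument in fact works verbatim for any Mahonian statistic in place of $\ninv$, matching the generality of Lemma~\ref{lem:sqt} and Proposition~\ref{prop:bijection}.
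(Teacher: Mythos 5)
Your proof is correct and follows essentially the same route as the paper's: the paper's own argument is a sketch that appends the phantom letter $r+1$ for absent elements, decomposes $w(\hat T)$ into the blocks of $\AA_\alpha$ to land in $\prod_i R(\gamma^i)$ with $\gamma^i=(\beta^i_1,\ldots,\beta^i_\ell,\alpha_i-|\beta^i|)$, and invokes the Mahonian property of $\ninv$, exactly as you do. The only difference is that you spell out the verification (deferred in the paper to the analogy with Proposition~\ref{prop:bijection}) that the inverse construction yields a tableau in $\tabnmualpha$ with $\salpha(\hat T)=T$, which you correctly derive from the column-strictness of $T$ and the fact that the blocks $A_i$ are increasing intervals.
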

\begin{proof}
  The proof is very similar to that of Lemma \ref{lem:sqt}; we give a sketch of the proof. For each tableau $T\in {\rm Tab}_{\subseteq [n]}(\mu)$, consider the associated word $w(T)=a_1a_2\cdots a_n$ as in Definition \ref{def:generalword}. Suppose $\alpha=(\alpha_1,\ldots,\alpha_k)$ and that $\mu$ has $\ell$ parts. By an argument analogous to the one in the proof of Proposition \ref{prop:bijection},  we obtain
  \begin{align*}
    \sum_{\substack{\hat{T}\in \syt(\mu,\alpha)\\ \SS_\alpha(\hat{T})=T}}q^{\ninv_\alpha(w(\hat{T}))}&=\sum_{(w_1,\ldots,w_k)\in \prod_{i\geq 1}R(\gamma^i)} q^{\ninv(w_1)+\cdots+\ninv(w_k)},
  \end{align*}
where $\gamma^i$ denotes the weak composition $({\beta^i_1},{\beta^i_2},\ldots, {\beta^i_{\ell}},{\alpha_i-|\beta^i(T)|})$. Write $\beta^i=(\beta^i_1,\beta^i_2,\ldots)$. Since $\ninv$ is Mahonian, the sum on the right hand side above equals
  \begin{align*}
         \prod_{i\geq 1}\sum_{w_i \in R(\gamma^i)}q^{\ninv(w_i)} &=\prod_{i\geq 1}{\alpha_i \brack \alpha_i-|\beta^i|,\beta^i_{1},\beta^i_{2},\ldots}_q =s^\alpha_q(T). \qedhere
  \end{align*}
\end{proof}

\begin{theorem}\label{thm:profilesviassyt}
 Let $\Delta$ be a diagonal matrix of type $\alpha$ over $\Fq$.   The number of subspaces with $\Delta$-profile $\mu$ is given by
  \begin{align*}
    \sigma(\mu,\alpha)=(q-1)^{\sum_{j\geq 2}\mu_j}q^{\sum_{j\geq 2}{\mu_j \choose 2}}b_{\mu\alpha}(q).
  \end{align*}
\end{theorem}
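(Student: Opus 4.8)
The plan is to combine the formula for $\sigma(\mu,\Delta)$ already obtained in Corollary~\ref{cor:sigmudelt} with Definition~\ref{def:bmualphaall} of $b_{\mu\alpha}(q)$ by means of a fiber decomposition along the map $\salpha$. Since every diagonal matrix is similar to a block-scalar matrix and $\sigma(\mu,\Delta)$ depends only on the conjugacy class of $\Delta$, it suffices to treat the block-scalar case. There Corollary~\ref{cor:sigmudelt} gives
$$\sigma(\mu,\Delta)=(q-1)^{\sum_{j\geq 2}\mu_j}q^{\sum_{j\geq 2}{\mu_j \choose 2}}\sum_{\hat T\in \tabnmualpha} q^{\ninv_\alpha(w(\hat T))}r_q(\salpha(\hat T)),$$
and since the prefactor already matches the claimed formula, the whole task reduces to proving
$$\sum_{\hat T\in \tabnmualpha} q^{\ninv_\alpha(w(\hat T))}r_q(\salpha(\hat T))=b_{\mu\alpha}(q).$$

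First I would note that $\salpha$, which replaces each entry $x$ of a tableau by $\SS_\alpha(x)$, carries $\tabnmualpha$ into $\sstabmua$: the defining condition that no column of $\hat T$ contains two elements of a single block of $\AA_\alpha$ is exactly what forces the columns of $\salpha(\hat T)$ to remain strictly increasing, while the weak monotonicity of $\SS_\alpha$ preserves the weak row- and column-increase. Thus $\salpha$ partitions $\tabnmualpha$ into fibers $\salpha^{-1}(T)$ indexed by $T\in \sstabmua$, and the sum above regroups as
$$\sum_{T\in \sstabmua}\ \sum_{\substack{\hat T\in \tabnmualpha\\ \salpha(\hat T)=T}} q^{\ninv_\alpha(w(\hat T))}r_q(\salpha(\hat T)).$$

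Next, on each fiber the factor $r_q(\salpha(\hat T))=r_q(T)$ is constant and may be pulled outside the inner sum. Lemma~\ref{lem:sqalpha} evaluates the remaining inner sum as $s_q^\alpha(T)$, so the expression collapses to $\sum_{T\in \sstabmua} r_q(T)\,s_q^\alpha(T)$, which is precisely $b_{\mu\alpha}(q)$ by Definition~\ref{def:bmualphaall}. Substituting back into the block-scalar formula yields the theorem.

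The argument is essentially bookkeeping, so I expect no serious obstacle. The only points requiring care are confirming that $\salpha$ genuinely lands in $\sstabmua$ --- so that the index set of the fiber decomposition is the correct one --- and that $r_q$ is truly constant on each fiber; both follow at once from the column condition defining $\tabnmualpha$ and from the fact that $r_q(\salpha(\hat T))$ depends only on the image $\salpha(\hat T)$. All the genuine content --- the linear algebra over $\Fq$ and the Mahonian summation --- has already been absorbed into Corollary~\ref{cor:sigmudelt} and Lemma~\ref{lem:sqalpha}, so the present proof merely assembles these pieces.
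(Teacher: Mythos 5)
Your proof is correct and follows exactly the same route as the paper: apply Corollary~\ref{cor:sigmudelt}, regroup the sum over $\tabnmualpha$ into fibers of $\salpha$ over $\sstabmua$, pull out the constant factor $r_q(T)$, and evaluate the inner sum via Lemma~\ref{lem:sqalpha}. The only cosmetic difference is that you separately invoke the reduction to the block-scalar case, which the paper has already absorbed into Corollary~\ref{cor:sigmudelt} itself.
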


\begin{proof} 
 Note that if $\hat{T}\in \tabnmualpha$, then $\salpha(\hat{T})\in \sstabmua.$ By Corollary \ref{cor:sigmudelt}, 
  \begin{align*}
    \sigma(\mu,\Delta)&=(q-1)^{\sum_{j\geq 2}\mu_j}q^{\sum_{j\geq 2}{\mu_j \choose 2}}\sum_{\hat{T}\in \tabnmualpha} q^{\ninv_\alpha(w(\hat{T}))}r_q(\salpha(\hat{T}))  \\
                      &=(q-1)^{\sum_{j\geq 2}\mu_j}q^{\sum_{j\geq 2}{\mu_j \choose 2}}\sum_{T\in \sstabmua}     \sum_{\substack{\hat{T}\in \tabnmualpha\\ \salpha(\hat{T})=T} } q^{\ninv_\alpha(w(\hat{T}))}r_q(\salpha(\hat{T}))  \\
                      &=(q-1)^{\sum_{j\geq 2}\mu_j}q^{\sum_{j\geq 2}{\mu_j \choose 2}}\sum_{T\in \sstabmua}  r_q(T)   \sum_{\substack{\hat{T}\in \tabnmualpha\\ \salpha(\hat{T})=T} } q^{\ninv_\alpha(w(\hat{T}))} .
  \end{align*}
  By Lemma \ref{lem:sqalpha}, the innermost sum is simply $s_q^{\alpha}(T)$ and the theorem follows.
\end{proof}

Observe that $\sigma(\mu,\alpha)$ is invariant under permutations of the coordinates of $\alpha$, since permuting the entries of a diagonal matrix leaves its conjugacy class invariant. Consequently, $b_{\mu\alpha}(q)$ is invariant under permutations of coordinates of $\alpha$. Therefore it suffices to study $b_{\mu\nu}(q)$ for integer partitions $\nu$. Another point worth noting is that one can define $b_{\mu\alpha}(q)$ by treating $q$ as a formal variable using Definition \ref{def:bmualphaall} even when there is no diagonal matrix of type $\alpha$ over the finite field $\Fq$. When $q$ is treated as a formal variable, it is clear that $b_{\mu\alpha}(q)$ is a polynomial in $q$ and we assume this viewpoint in the next section.
\begin{remark}
  The results in this section play a crucial role in the solution to the general case of Problem~\ref{prob:main} for arbitrary operators. The reader is referred to \cite{ram2023subspace} for the details.  
\end{remark}

\section{$q$-Whittaker functions and the Touchard--Riordan formula}    
\label{sec:symmetric}
In this section we show that the polynomials $b_{\mu\nu}(q)$ are closely related to coefficients in the monomial expansion of the $q$-Whittaker symmetric function. Though the name `$q$-Whittaker' was coined by Gerasimov, Lebedev and Oblezin \cite{MR2575477}, they had already been considered previously by Macdonald~\cite{MR1354144}. The $q$-Whittaker function $W_\lambda({\bf x};q)$ is obtained as the $t=0$ specialization of the Macdonald polynomial $P_\lambda({\bf x};q,t)$. One has the specializations  
\begin{align}\label{eq:qwspecialization}
  W_\lambda({\bf x};0)=s_{\lambda}({\bf x}), \quad W_\lambda({\bf x};1)=e_{\lambda'}({\bf x}),
\end{align}  
where $s_\lambda$ denotes the Schur function and $e_\lambda$  denotes the elementary symmetric function. The $q$-Whittaker function arises in a representation theoretic setting in the context of the graded Frobenius characteristic of the cohomology ring of Springer fibers. The reader is referred to the survey article of Bergeron \cite{bergeron2020survey} for more on $q$-Whittaker functions. Denoting by $m_\lambda$ the monomial symmetric function indexed by $\lambda$, we have (Macdonald \cite[VI.(7.13')]{MR1354144}) 
\begin{align} 
  W_{\mu}(\bx)= \sum_{\nu}a_{\mu\nu}(q)m_\nu,\label{eq:defofas}
\end{align}
where, for each partition $\nu=(\nu_1,\ldots,\nu_k)$, 
\begin{align*}
  a_{\mu\nu}(q)=\sum \prod_{j=1}^k \psi_{\mu^{j}/\mu^{j-1}}(q),
\end{align*}
where the sum is taken over all chains $\emptyset=\mu^{0}\subseteq \mu^{1}\subseteq \cdots\subseteq \mu^{k}=\mu$  of partitions such that $\mu^{j}/\mu^{j-1}$ is a horizontal $\nu_j$-strip. Here
\begin{align*}
  \psi_{\mu/\rho}(q)=\prod_{i\geq 1}{\mu_i-\mu_{i+1} \brack \mu_i-\rho_i}_q.
\end{align*}
We will show in Theorem \ref{thm:whittakerconnection} that the the polynomials $b_{\mu\nu}(q)$ can be expressed in terms of the coefficients $a_{\mu\nu}(q)$ above.  
\begin{proposition}
Given partitions $\mu$ and $\nu=(\nu_1,\ldots,\nu_k)$ of the same size,
\begin{align*}
  b_{\mu\nu}(q)= \sum\prod_{j=1}^k \theta_{\mu^{j}/\mu^{j-1}}(q),
\end{align*}
where the sum is taken over all chains $\emptyset=\mu^{0}\subseteq \mu^{1}\subseteq \cdots\subseteq \mu^{k}=\mu$ of partitions such that $\mu^{j}/\mu^{j-1}$ is a horizontal $\nu_j$-strip ($1\leq j\leq k$) and 
\begin{align*}
  \theta_{\mu/\rho}(q)=\frac{[|\mu|-|\rho|]_q!}{[\mu_1-\rho_1]_q!}\prod_{i\geq 1}{\rho_i-\rho_{i+1} \brack \mu_{i+1}-\rho_{i+1}}_q,
\end{align*}
where $[k]_q!$ denotes the product $[1]_q[2]_q\cdots [k]_q$.
\end{proposition}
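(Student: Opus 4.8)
The plan is to compute $b_{\mu\nu}(q)$ directly from Definition~\ref{def:bmualphaall} and match it against the claimed sum term by term. Since $|\mu|=|\nu|$, Definition~\ref{def:bmualphaall} reduces to $b_{\mu\nu}(q)=\sum_{T\in\ssyt(\mu,\nu)}r_q(T)s_q(T)$, so I would first invoke the standard encoding of a semistandard tableau $T$ of content $\nu=(\nu_1,\ldots,\nu_k)$ by the chain of partitions $\emptyset=\mu^{(0)}\subseteq\mu^{(1)}\subseteq\cdots\subseteq\mu^{(k)}=\mu$, where $\mu^{(v)}$ is the shape occupied by the entries $\leq v$. Column-strictness of $T$ is equivalent to each skew shape $\mu^{(v)}/\mu^{(v-1)}$ being a horizontal strip, and the content condition forces $|\mu^{(v)}/\mu^{(v-1)}|=\nu_v$; this gives a bijection between $\ssyt(\mu,\nu)$ and exactly the chains indexing the right-hand side. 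The proposition then follows once I establish the per-tableau identity $r_q(T)s_q(T)=\prod_{v=1}^k\theta_{\mu^{(v)}/\mu^{(v-1)}}(q)$.

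To prove this factorization I would distribute both $r_q(T)$ and $s_q(T)$ across the levels $v$ (each cell of $T$ belongs to the level equal to its label, so $r_q(T)=\prod_v\prod_{\text{cells labelled }v,\ r\geq 2}[r_{rc}(T)]_q$) and show the level-$v$ contribution equals $\theta_{\mu^{(v)}/\mu^{(v-1)}}(q)$. Writing $a_r=\mu^{(v)}_r$, $b_r=\mu^{(v-1)}_r$ and $\beta^v_r=a_r-b_r$, the level-$v$ factor of $s_q(T)$ is the $q$-multinomial ${\nu_v \brack \beta^v_1,\beta^v_2,\ldots}_q=[\nu_v]_q!/\prod_r[\beta^v_r]_q!$. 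Using $|\mu^{(v)}|-|\mu^{(v-1)}|=\nu_v$ and $\mu^{(v)}_1-\mu^{(v-1)}_1=\beta^v_1$, the definition of $\theta$ rewrites as $\theta_{\mu^{(v)}/\mu^{(v-1)}}(q)=\frac{[\nu_v]_q!}{[\beta^v_1]_q!}\prod_{r\geq 1}{b_r-b_{r+1}\brack a_{r+1}-b_{r+1}}_q$, so after cancelling the common $[\nu_v]_q!$ the claim reduces, for each fixed row $s\geq 2$ (reindexing $s=r+1$ and using $a_s-b_s=\beta^v_s$), to
\begin{align*}
\prod_{\text{cells }(s,c)\text{ labelled }v}[r_{sc}(T)]_q={b_{s-1}-b_s \brack \beta^v_s}_q\,[\beta^v_s]_q!.
\end{align*}

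The heart of the argument, and the step I expect to require the most care, is the explicit evaluation of $r_{sc}(T)$. For a cell $(s,c)$ with $T_{s,c}=v$, the entries of row $s-1$ strictly less than $v$ occupy precisely its first $b_{s-1}=\mu^{(v-1)}_{s-1}$ columns, so $r_{sc}(T)=\#\{c'\geq c: T_{s-1,c'}<v\}=\#\{c'\geq c:c'\leq b_{s-1}\}=b_{s-1}-c+1$; and the cells labelled $v$ in row $s$ occupy columns $c=b_s+1,\ldots,b_s+\beta^v_s$. As $c$ runs over this range, $r_{sc}(T)$ runs through $b_{s-1}-b_s,\,b_{s-1}-b_s-1,\ldots,b_{s-1}-b_s-\beta^v_s+1$, so the product on the left equals $\frac{[b_{s-1}-b_s]_q!}{[b_{s-1}-b_s-\beta^v_s]_q!}={b_{s-1}-b_s \brack \beta^v_s}_q[\beta^v_s]_q!$, as required. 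The main obstacles are bookkeeping (keeping the level index $v$, the row index, and the three partitions $a,b,\beta$ straight, and treating row $1$ via the prefactor $[\mu_1-\rho_1]_q!$ rather than via $r_q$) and checking that the horizontal-strip inequalities $a_s\leq b_{s-1}$ guarantee $b_{s-1}-b_s-\beta^v_s+1\geq 1$, so that none of the $q$-integers above vanish or get truncated and every $q$-binomial is well defined.
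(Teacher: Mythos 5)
Your proposal is correct and follows essentially the same route as the paper: both identify $\ssyt(\mu,\nu)$ with chains of horizontal strips and factor $r_q(T)s_q(T)$ level by level into $\prod_j\theta_{\mu^{j}/\mu^{j-1}}(q)$. The only difference is that you spell out the evaluation $r_{sc}(T)=b_{s-1}-c+1$ and the resulting telescoping product explicitly, a step the paper merely asserts.
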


\begin{proof}
By Definition \ref{def:bmualpha},
  \begin{align*}
    b_{\mu\nu}(q)=\sum_{T\in \ssyt(\mu,\nu)}r_q(T)s_q(T).
  \end{align*}
Consider a tableau $T\in \ssyt(\mu,\nu)$. For each positive integer $j\leq k$, let $\mu^{j}$ denote the partition corresponding to the shape of the tableau formed by the entries not exceeding $j$ in $T$. Then it is clear that $\mu^{j}$ is obtained from $\mu^{j-1}$ by adding a horizontal strip of size $\nu_j$ for each $1\leq j\leq k$. Now fix $j$ and write $\rho=\mu^{j-1}$ and $\lambda=\mu^{j}$. The contribution to $s_q(T)$ of entries equal to $j$ in $T$ is the multinomial coefficient 
  \begin{align*}
    {|\lambda|-|\rho|\brack \lambda_1-\rho_1,\lambda_2-\rho_2,\ldots}_q.
  \end{align*}
  On the other hand, the contribution to $r_q(T)$ coming from entries equal to $j$ is given by
  \begin{align*}
    \prod_{i\geq 1}{\rho_i-\rho_{i+1} \brack \lambda_{i+1}-\rho_{i+1}}_q [\lambda_{i+1}-\rho_{i+1}]_q!
  \end{align*}
  Multiplying the last two expressions above, it follows that the total contribution of all entries equal to $j$ to the product $r_q(T)s_q(T)$ is given by
  \begin{align*}
    \theta_{\lambda/\rho}(q)=\frac{[|\lambda|-|\rho|]_q!}{[\lambda_1-\rho_1]_q!} \prod_{i\geq 1}{\rho_i-\rho_{i+1}\brack \lambda_{i+1}-\rho_{i+1}}_q.
  \end{align*}
Therefore, the contribution of all entries of $T$ to $r_q(T)s_q(T)$ is equal to
  \begin{align*}
 \prod_{j=1}^k \theta_{\mu^{j}/\mu^{j-1}}(q),
  \end{align*}
  and the proposition follows. 
\end{proof}
\begin{corollary}\label{cor:recforb}
Let $\mu,\nu$ be partitions of $n$ with $\nu=(\nu_1,\ldots,\nu_k).$ Then
\begin{align*}
  b_{\mu\nu}(q)=\sum_{\substack{\rho:\mu/\rho\text{ is a horizontal}\\ \text{strip of size } \nu_k}}b_{\rho,\hat{\nu}}(q) \; \theta_{\mu/\rho}(q),
\end{align*}
where $\hat{\nu}$ denotes the partition obtained by deleting the last part of $\nu$.
\end{corollary}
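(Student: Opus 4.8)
The plan is to read the recursion directly off the Proposition that immediately precedes it, which already expresses $b_{\mu\nu}(q)$ as a weighted sum over saturated chains of partitions. The key observation is that each chain $\emptyset=\mu^{0}\subseteq\mu^{1}\subseteq\cdots\subseteq\mu^{k}=\mu$ occurring in that formula is determined by its penultimate term $\rho:=\mu^{k-1}$ together with a shorter chain $\emptyset=\mu^{0}\subseteq\cdots\subseteq\mu^{k-1}=\rho$, and that the chain weight $\prod_{j=1}^{k}\theta_{\mu^{j}/\mu^{j-1}}(q)$ is multiplicative along the chain. So the first step is to fix $\rho$ and split the sum according to this last term.

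Concretely, applying the Proposition to the pair $(\mu,\nu)$ and factoring off the final factor $\theta_{\mu/\rho}(q)$ gives
\begin{align*}
  b_{\mu\nu}(q)=\sum_{\rho}\;\theta_{\mu/\rho}(q)\sum_{\emptyset=\mu^{0}\subseteq\cdots\subseteq\mu^{k-1}=\rho}\prod_{j=1}^{k-1}\theta_{\mu^{j}/\mu^{j-1}}(q),
\end{align*}
where the outer sum runs over those $\rho$ for which $\mu/\rho$ is a horizontal $\nu_k$-strip, and the inner sum runs over all chains in which $\mu^{j}/\mu^{j-1}$ is a horizontal $\nu_j$-strip for $1\le j\le k-1$. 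The essential bookkeeping to verify here is on sizes: since $\mu/\rho$ is a horizontal $\nu_k$-strip, $\rho$ is a partition of $|\mu|-\nu_k=n-\nu_k=|\hat\nu|$, so the inner sum is exactly the chain sum attached to the pair $(\rho,\hat\nu)$ with $\hat\nu=(\nu_1,\ldots,\nu_{k-1})$. A second application of the Proposition, now to $(\rho,\hat\nu)$, then identifies this inner sum as $b_{\rho,\hat\nu}(q)$, and substituting back yields the claimed recursion.

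Because the Proposition does all the genuine combinatorial work, I do not expect a real obstacle; the only points needing attention are the size agreement $|\hat\nu|=|\rho|$ just noted, and the degenerate case $k=1$. In that case $\hat\nu$ is the empty partition, and I would check that the formula reduces correctly: the only admissible $\rho$ is $\rho=\emptyset$, the empty chain contributes $b_{\emptyset,\emptyset}(q)=1$, and one recovers $b_{\mu,(\nu_1)}(q)=\theta_{\mu/\emptyset}(q)$, in agreement with the Proposition for a single-part content. Conceptually the argument is the standard \emph{peel off the last horizontal strip} manipulation, and its correctness rests entirely on the multiplicativity of the chain weight together with the matching of index sets.
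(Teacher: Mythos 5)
Your proposal is correct and is exactly the argument the paper intends: the corollary is stated without proof precisely because it follows by splitting the chain sum of the preceding Proposition at the penultimate partition $\rho=\mu^{k-1}$, factoring off $\theta_{\mu/\rho}(q)$, and recognizing the inner sum as $b_{\rho,\hat\nu}(q)$. Your size check $|\rho|=|\hat\nu|$ and the $k=1$ base case are the right details to verify, and both go through as you describe.
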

Corollary \ref{cor:recforb} can be used to efficiently compute the polynomials $b_{\mu\nu}(q)$ recursively, starting with the base case $b_{\mu\nu}(q)=1$ when $\mu$ and $\nu$ are empty partitions.
\begin{theorem}\label{thm:whittakerconnection}
If $W_{\mu}=\sum_{\nu}a_{\mu\nu}(q)m_\nu$ denotes the monomial expansion of the $q$-Whittaker function, then
  \begin{align*}
 b_{\mu\nu}(q)   =\frac{\prod_{i\geq 1}[\nu_i]_q!}{\prod_{i\geq 1}[\mu_i-\mu_{i+1}]_q!}  a_{\mu\nu}(q).
  \end{align*}
\end{theorem}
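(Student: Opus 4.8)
The plan is to exploit the fact that the preceding Proposition and Macdonald's formula \eqref{eq:defofas} express $b_{\mu\nu}(q)$ and $a_{\mu\nu}(q)$ as sums over \emph{the same} index set, namely the chains $\emptyset=\mu^{0}\subseteq\mu^{1}\subseteq\cdots\subseteq\mu^{k}=\mu$ whose successive skews $\mu^{j}/\mu^{j-1}$ are horizontal $\nu_j$-strips (equivalently, over $\ssyt(\mu,\nu)$). The per-chain weight is $\prod_{j}\theta_{\mu^{j}/\mu^{j-1}}(q)$ for $b_{\mu\nu}$ and $\prod_{j}\psi_{\mu^{j}/\mu^{j-1}}(q)$ for $a_{\mu\nu}$. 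Since the claimed prefactor $\prod_i[\nu_i]_q!/\prod_i[\mu_i-\mu_{i+1}]_q!$ depends only on $\mu$ and $\nu$ and not on the chain, it suffices to prove the per-chain identity
\[
  \prod_{j=1}^{k}\theta_{\mu^{j}/\mu^{j-1}}(q)=\frac{\prod_{i\geq 1}[\nu_i]_q!}{\prod_{i\geq 1}[\mu_i-\mu_{i+1}]_q!}\prod_{j=1}^{k}\psi_{\mu^{j}/\mu^{j-1}}(q),
\]
after which one simply factors the prefactor out of the sum over chains.

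First I would reduce to a single step of the chain. Writing $\rho=\mu^{j-1}$, $\lambda=\mu^{j}$ and expanding every $q$-binomial in $\theta_{\lambda/\rho}$ and $\psi_{\lambda/\rho}$ into $q$-factorials, one sees that both denominators carry a common factor $\prod_i[\rho_i-\lambda_{i+1}]_q!$, which cancels in the ratio. The remaining product $\prod_i[\lambda_i-\rho_i]_q!/[\lambda_{i+1}-\rho_{i+1}]_q!$ telescopes to $[\lambda_1-\rho_1]_q!$ (the tail factorials are $[0]_q!=1$), exactly cancelling the prefactor $1/[\lambda_1-\rho_1]_q!$ sitting in $\theta_{\lambda/\rho}$. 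With $\nu_j=|\lambda|-|\rho|$ this collapses the single-step ratio to
\[
  \frac{\theta_{\lambda/\rho}(q)}{\psi_{\lambda/\rho}(q)}=[\nu_j]_q!\prod_{i\geq 1}\frac{[\rho_i-\rho_{i+1}]_q!}{[\lambda_i-\lambda_{i+1}]_q!}.
\]

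Then I would take the product over $j=1,\ldots,k$. The factors $[\nu_j]_q!$ multiply to $\prod_{i\geq 1}[\nu_i]_q!$, the numerator of the target prefactor. For the remaining double product, fixing $i$ and letting $j$ run, the quotients $[\mu^{j-1}_i-\mu^{j-1}_{i+1}]_q!/[\mu^{j}_i-\mu^{j}_{i+1}]_q!$ telescope to $[\mu^{0}_i-\mu^{0}_{i+1}]_q!/[\mu^{k}_i-\mu^{k}_{i+1}]_q!=1/[\mu_i-\mu_{i+1}]_q!$, using $\mu^{0}=\emptyset$ and $\mu^{k}=\mu$. This reproduces precisely the chain-independent prefactor, establishing the per-chain identity; summing over chains and invoking the preceding Proposition together with \eqref{eq:defofas} finishes the proof. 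I expect the only delicate point to be the $q$-factorial bookkeeping that isolates the two telescoping cancellations—identifying the common $[\rho_i-\lambda_{i+1}]_q!$ and recognizing which telescoping kills the $[\lambda_1-\rho_1]_q!$ prefactor; once these are spotted, the rest is routine.
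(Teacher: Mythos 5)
Your proposal is correct and follows essentially the same route as the paper: both reduce the theorem to the chain-independent identity $\prod_{j}\theta_{\mu^{j}/\mu^{j-1}}(q)=\bigl(\prod_{i}[\nu_i]_q!/\prod_{i}[\mu_i-\mu_{i+1}]_q!\bigr)\prod_{j}\psi_{\mu^{j}/\mu^{j-1}}(q)$ and then factor the prefactor out of the sum over chains. The only difference is that the paper dismisses this identity as ``easily verified,'' whereas you supply the verification --- the cancellation of the common $[\rho_i-\lambda_{i+1}]_q!$ and the two telescoping products --- all of which checks out.
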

\begin{proof}
  Write $\nu=(\nu_1,\ldots,\nu_k)$ and consider a chain $\emptyset=\mu^{0}\subseteq \mu^{1}\subseteq \cdots\subseteq \mu^{k}=\mu$ of partitions such that $\mu^{j}/\mu^{j-1}$ is a horizontal $\nu_j$-strip for $1\leq j\leq k$. We have
  \begin{align*}
    \psi_{\mu^{j}/\mu^{j-1}}(q)&=\prod_{i\geq 1}{\mu^{j}_i-\mu^{j}_{i+1} \brack \mu^{j}_i-\mu^{j-1}_i}_q,
  \end{align*}
  and
\begin{align*}                                     
        \theta_{\mu^{j}/\mu^{j-1}}(q)&=\frac{[\nu_j]_q!}{[\mu^{j}_1-\mu^{j-1}_1]_q!} \prod_{i\geq 1}{\mu^{j-1}_i-\mu^{j-1}_{i+1}\brack \mu^{j}_{i+1}-\mu^{j-1}_{i+1}}_q. 
  \end{align*}
  It is easily verified that
  \begin{align*}
\prod_{j=1}^k \theta_{\mu^{j}/\mu^{j-1}}(q)= \frac{\prod_{i\geq 1}[\nu_i]_q!}{\prod_{i\geq 1}[\mu_i-\mu_{i+1}]_q!}  \prod_{j=1}^k \psi_{\mu^{j}/\mu^{j-1}}(q) .
  \end{align*}
The proposition now follows from the expressions for $a_{\mu\nu}(q)$ and $b_{\mu\nu}(q)$ in terms of $\psi_{\mu/\rho}(q)$ and $\theta_{\mu/\rho}(q)$.  
\end{proof}
We now consider an application of Theorem \ref{thm:whittakerconnection} to binary integer matrices. The coefficients $M_{\lambda\nu}$ in the expansion
\begin{align*}
  e_\nu=\sum_{\lambda}M_{\lambda\nu}m_\lambda
\end{align*}
correspond to the number of $(0,1)$-matrices whose row and column sums are given by $\lambda$ and $\nu$ respectively (Stanley \cite[Prop.\ 7.4.1]{MR1676282}). A formula for computing $M_{\lambda\nu}$ using Kostka numbers appears in Macdonald \cite[I.(6.7)]{MR1354144}. An explicit expression not involving Kostka numbers was found by Johnsen and Straume~\cite{MR0878703}. Theorem \ref{thm:whittakerconnection} together with the $q=1$ specialization of the $q$-Whittaker functions in Eq.\ \eqref{eq:qwspecialization}, implies the following corollary.   
\begin{corollary}\label{cor:binmat}
Let $\lambda$ and $\nu$ be integer partitions. The number of binary integer matrices with row sums $\lambda$ and column sums $\nu$ is given by
  \begin{align*}
    M_{\lambda \nu}=b_{\lambda'\nu}(1)\frac{\prod_{i\geq 1}m_i(\lambda)!}{\prod_{i\geq 1}\nu_i!},
  \end{align*}
  where $\lambda'$ denotes the partition conjugate to $\lambda$ and  $m_i(\lambda)$ is the multiplicity of the integer $i$ as a part of $\lambda$ for $i\geq 1$. 
\end{corollary}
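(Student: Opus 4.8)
The plan is to combine Theorem~\ref{thm:whittakerconnection} with the $q=1$ specialization $W_\mu(\bx;1)=e_{\mu'}$ recorded in Eq.~\eqref{eq:qwspecialization}, and then to carry out some elementary bookkeeping on partition statistics. The whole argument amounts to evaluating the $q$-Whittaker connection at $q=1$ and reinterpreting the coefficients $a_{\mu\nu}(1)$ as binary matrix counts.

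First I would extract the combinatorial meaning of $a_{\mu\nu}(1)$. Setting $q=1$ in the monomial expansion \eqref{eq:defofas} gives $W_\mu(\bx;1)=\sum_\nu a_{\mu\nu}(1)\,m_\nu$, while the specialization \eqref{eq:qwspecialization} gives $W_\mu(\bx;1)=e_{\mu'}=\sum_\nu M_{\nu\mu'}\,m_\nu$ using the defining expansion $e_\nu=\sum_\lambda M_{\lambda\nu}m_\lambda$ (with summation index renamed). Comparing coefficients of $m_\nu$ yields $a_{\mu\nu}(1)=M_{\nu\mu'}$.

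Next I would evaluate Theorem~\ref{thm:whittakerconnection} at $q=1$. Since $[i]_1=i$, we have $[n]_1!=n!$, so the theorem specializes to $b_{\mu\nu}(1)=\bigl(\prod_{i\ge1}\nu_i!\bigr)\big/\bigl(\prod_{i\ge1}(\mu_i-\mu_{i+1})!\bigr)\cdot a_{\mu\nu}(1)$. Substituting the identity from the previous step gives $b_{\mu\nu}(1)=\bigl(\prod_{i\ge1}\nu_i!\bigr)\big/\bigl(\prod_{i\ge1}(\mu_i-\mu_{i+1})!\bigr)\cdot M_{\nu\mu'}$.

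Finally I would specialize $\mu=\lambda'$, so that $\mu'=\lambda$, and invoke two elementary facts: (a) the conjugation identity $\lambda'_i-\lambda'_{i+1}=m_i(\lambda)$, which turns the denominator into $\prod_{i\ge1}m_i(\lambda)!$; and (b) the transpose symmetry $M_{\nu\lambda}=M_{\lambda\nu}$, obtained by transposing a $(0,1)$-matrix, which swaps the roles of row and column sums. These reduce the relation to $b_{\lambda'\nu}(1)=\bigl(\prod_{i\ge1}\nu_i!\bigr)\big/\bigl(\prod_{i\ge1}m_i(\lambda)!\bigr)\cdot M_{\lambda\nu}$, and solving for $M_{\lambda\nu}$ produces the stated formula. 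I do not expect any genuine obstacle here; the only point requiring care is the consistent tracking of the index placements together with the two conjugation/transposition maneuvers needed to bring the expression into precisely the form of the corollary.
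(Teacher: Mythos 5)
Your proposal is correct and is precisely the argument the paper intends: the paper states that the corollary follows from Theorem~\ref{thm:whittakerconnection} together with the $q=1$ specialization $W_\mu({\bf x};1)=e_{\mu'}$, and you have simply written out the details (identifying $a_{\mu\nu}(1)=M_{\nu\mu'}$, using $\lambda'_i-\lambda'_{i+1}=m_i(\lambda)$, and the transpose symmetry $M_{\nu\lambda}=M_{\lambda\nu}$). All steps check out.
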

  In view of Definition \ref{def:bmualpha}, Corollary \ref{cor:binmat} implies the following formula for $M_{\lambda\nu}$ as a sum over semistandard tableaux.
  \begin{corollary}\label{cor:binmatviassyt}
We have    
  \begin{align*}
    M_{\lambda\nu}=\frac{\prod_{i\geq 1}m_i(\lambda)!}{\prod_{i\geq 1}\nu_i!}\sum_{T\in \ssyt(\lambda',\nu)} \left(\prod_{i\geq 2}\prod_{j\geq 1}r_{ij}(T)\right) \prod_{i\geq 1}{\nu_i \choose \beta^i(T)},
  \end{align*}
  where $\beta^i(T)$ are as in Definition \ref{def:bit} and $r_{ij}(T)$ are as in Definition \ref{def:rqt}. 
\end{corollary}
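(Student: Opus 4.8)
The plan is to derive this corollary directly from Corollary \ref{cor:binmat} by inserting the explicit tableau expansion of $b_{\lambda'\nu}(q)$ and then specializing $q$ to $1$. Corollary \ref{cor:binmat} already gives
\begin{align*}
  M_{\lambda\nu}=b_{\lambda'\nu}(1)\,\frac{\prod_{i\geq 1}m_i(\lambda)!}{\prod_{i\geq 1}\nu_i!},
\end{align*}
so the entire task reduces to evaluating the polynomial $b_{\lambda'\nu}(q)$ at $q=1$ and checking that the result matches the tableau sum in the statement.

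First I would invoke Definition \ref{def:bmualpha}, which expresses $b_{\lambda'\nu}(q)=\sum_{T\in \ssyt(\lambda',\nu)}r_q(T)s_q(T)$, and then specialize each factor at $q=1$ termwise. For the factor $r_q(T)=\prod_{i\geq 2}\prod_{j\geq 1}[r_{ij}(T)]_q$ from Definition \ref{def:rqt}, the elementary identity $[k]_q\big|_{q=1}=k$ yields $r_1(T)=\prod_{i\geq 2}\prod_{j\geq 1}r_{ij}(T)$. For the factor $s_q(T)=\prod_{i\geq 1}{\nu_i \brack \beta^i(T)}_q$ from Definition \ref{def:sqt} (here the content $\alpha$ is the partition $\nu$), I would use that a $q$-multinomial coefficient specializes to the ordinary multinomial coefficient at $q=1$, so that $s_1(T)=\prod_{i\geq 1}\binom{\nu_i}{\beta^i(T)}$. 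Multiplying these two specializations and summing over $T\in\ssyt(\lambda',\nu)$ gives the closed form for $b_{\lambda'\nu}(1)$.

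Substituting this expression for $b_{\lambda'\nu}(1)$ into the formula from Corollary \ref{cor:binmat} produces precisely the claimed identity for $M_{\lambda\nu}$. I do not anticipate any genuine obstacle here: the argument is purely a substitution followed by the two routine specializations $[k]_q\big|_{q=1}=k$ and ${n \brack \beta}_q\big|_{q=1}={n\choose \beta}$, both immediate from the definitions of the $q$-analogs, together with the observation that the sum over $\ssyt(\lambda',\nu)$ is finite so the specialization commutes with the summation.
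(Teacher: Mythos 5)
Your proposal is correct and is exactly the argument the paper intends: the paper derives Corollary~\ref{cor:binmatviassyt} from Corollary~\ref{cor:binmat} by substituting the tableau expansion of $b_{\lambda'\nu}(q)$ from Definition~\ref{def:bmualpha} and evaluating at $q=1$, using $[k]_q\big|_{q=1}=k$ and the $q=1$ specialization of the $q$-multinomial coefficient. Nothing is missing.
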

The formula above provides a very efficient nonrecursive method to compute $M_{\lambda\nu}$ when the Kostka number $K_{\lambda'\nu}$ is small. 

We now consider an application of the above results to a classical combinatorial problem. Given a positive integer $m$, a \emph{chord diagram} on $2m$ points is a collection of $2m$ equally spaced points on the circumference of a circle joined by chords in pairs. Such chord diagrams are easily seen to correspond to set partitions of $[2m]$ in which all blocks are of size 2. The chord diagrams for $m=2$, are shown in Figure \ref{fig:chords}. 
\begin{figure}[!ht]
  \centering
  \includegraphics[scale=.5]{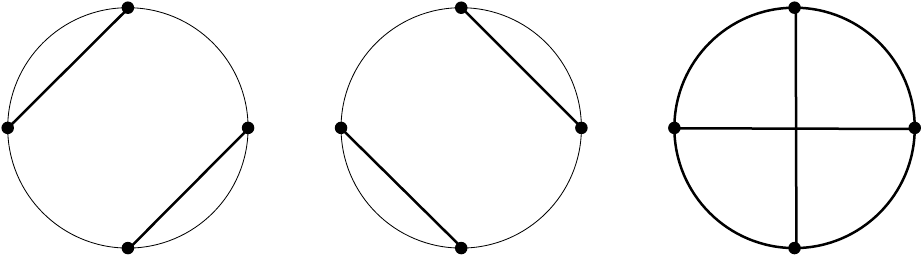}
  \caption{All chord diagrams on 4 points.} 
  \label{fig:chords}
\end{figure}
Among all chord diagrams on $2m$ points, let $t_{m,k}$ denote the number of chord diagrams which have $k$ pairs of crossing chords (these coincide with crossings of arcs in the corresponding set partition) and define $T_m(q)=\sum_{k\geq 0}t_{m,k}q^k$. Thus $T_m(q)$ is the generating polynomial for chord diagrams on $2m$ points by number of crossings. For instance, $T_2(q)=2+q$. The polynomials $T_m(q)$ were investigated by Touchard \cite{MR37815,MR0036006,MR46325}  in the context of the stamp folding problem. Building on Touchard's work, Riordan~\cite{MR366686} gave a beautiful analytic formula for $T_m(q)$:    
\begin{equation}
  \label{eq:touchard-riordan}
  T_m(q)(1-q)^m = \sum_{i=0}^m (-1)^i\left[\binom{2m}{m-i}-\binom{2m}{m-i-1}\right]q^{\binom{i+1}2}.
\end{equation}
 An alternative proof of this formula using ideas from the theory of continued fractions can be found in Read~\cite{MR556055}. A bijective proof due to Penaud~\cite{MR1336847} is also known. The reader is referred to Aigner \cite[p.\ 337]{MR2339282} for a very nice exposition.

We will prove that the polynomials $T_m(q)$ can be expressed in terms of the $q$-Whittaker functions. Recall that the ring of symmetric functions is endowed with the Hall scalar product $\langle \cdot,\cdot\rangle$ with respect to which the monomial and complete homogeneous symmetric functions are dual bases (Macdonald \cite[p. 63]{MR1354144}). In other words, we have $\langle m_\lambda,h_\mu\rangle=\delta_{\lambda\mu}$, where $\delta_{\lambda\mu}$ denotes the Kronecker delta function. The coefficients $a_{\mu\nu}(q)$ in the monomial expansion of $W_\mu$ can also be written in terms of the scalar product as $\langle W_\mu,h_{\nu}\rangle.$
\begin{theorem}\label{thm:touchardasscalar}
  We have
  \begin{align*}
    T_m(q)=\frac{1}{[m]_q!} \langle W_{(m,m)},h_1^{2m} \rangle.
  \end{align*}
\end{theorem}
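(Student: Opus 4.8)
The plan is to reduce the identity to Theorem~\ref{thm:whittakerconnection} by first recognizing the Touchard--Riordan polynomial $T_m(q)$ as a specific member of the family $b_{\mu\nu}(q)$, namely $T_m(q)=b_{(m,m),(1^{2m})}(q)$, and then substituting $\mu=(m,m)$, $\nu=(1^{2m})$ into that theorem.

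First I would establish the combinatorial identification $T_m(q)=b_{(m,m),(1^{2m})}(q)$ using Corollary~\ref{cor:bviasetstat} with the composition $\alpha=(1^{2m})$ and any choice of Mahonian statistic $\varphi$. The conjugate of $(m,m)$ is $(2^m)$, so $\Pi((m,m)',\alpha)=\Pi((2^m),(1^{2m}))$. Since $\AA_{(1^{2m})}$ is the partition of $[2m]$ into singletons, the minimal-intersection condition is vacuous, whence this set is precisely $\Pi_{2m}((2^m))$, the set partitions of $[2m]$ all of whose blocks have size two --- equivalently, the chord diagrams on $2m$ points. For $\alpha=(1^{2m})$ each factor word $w_i$ in the definition of $\varphi_\alpha$ has length one, so $\varphi_\alpha(\AA)=0$ identically. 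Finally, by Remark~\ref{rem:crossings}, for $\alpha=(1^n)$ the $\alpha$-interlacings are ordinary interlacings, and when every block has size two these coincide with the crossings of the chord diagram; thus $i_\alpha(\AA)$ equals the crossing number. Corollary~\ref{cor:bviasetstat} then gives $b_{(m,m),(1^{2m})}(q)=\sum_{\AA}q^{i_\alpha(\AA)}=\sum_{k}t_{m,k}q^k=T_m(q)$.

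Next I would apply Theorem~\ref{thm:whittakerconnection} with $\mu=(m,m)$ and $\nu=(1^{2m})$. For $\nu=(1^{2m})$ every part equals $1$, so $\prod_{i\geq 1}[\nu_i]_q!=1$. For $\mu=(m,m)$ the only nonzero consecutive difference is $\mu_2-\mu_3=m$ (while $\mu_1-\mu_2=0$), so $\prod_{i\geq 1}[\mu_i-\mu_{i+1}]_q!=[m]_q!$. Hence the theorem yields $b_{(m,m),(1^{2m})}(q)=\tfrac{1}{[m]_q!}\,a_{(m,m),(1^{2m})}(q)$. Since $\nu=(1^{2m})$ gives $h_\nu=h_1^{2m}$ and, as noted just before the statement, $a_{\mu\nu}(q)=\langle W_\mu,h_\nu\rangle$, I identify $a_{(m,m),(1^{2m})}(q)=\langle W_{(m,m)},h_1^{2m}\rangle$. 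Combining this with the identification of the previous paragraph gives $T_m(q)=\tfrac{1}{[m]_q!}\langle W_{(m,m)},h_1^{2m}\rangle$, as claimed.

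I do not expect a serious obstacle: once the two inputs (Corollary~\ref{cor:bviasetstat} and Theorem~\ref{thm:whittakerconnection}) are in place, the argument is essentially bookkeeping. The only point demanding genuine care is the combinatorial identification in the first step --- verifying that for the shape $(2^m)$ and the all-singletons composition the relevant set partitions are exactly the chord diagrams, that the Mahonian contribution $\varphi_\alpha$ vanishes, and that $i_\alpha$ reduces to the crossing number, so that no spurious contributions arise from the arcs to $\infty$ in the extended chord diagram. A quick sanity check at $m=2$, where $T_2(q)=2+q$ and one computes $b_{(2,2),(1^4)}(q)=2+q$ directly from the two standard Young tableaux of shape $(2,2)$, confirms the normalization $1/[m]_q!$.
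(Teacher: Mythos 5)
Your proposal is correct and follows essentially the same route as the paper: identify $T_m(q)$ with $b_{(m,m),(1^{2m})}(q)$ via Corollary~\ref{cor:bviasetstat} and Remark~\ref{rem:crossings}, then specialize Theorem~\ref{thm:whittakerconnection} to $\mu=(m,m)$, $\nu=(1^{2m})$ and use $a_{\mu\nu}(q)=\langle W_\mu,h_\nu\rangle$. You supply slightly more detail than the paper (in particular, the observation that $\varphi_\alpha$ vanishes identically when $\alpha=(1^{2m})$, which the paper leaves implicit), but the argument is the same.
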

\begin{proof}
By Theorem \ref{thm:whittakerconnection},
\begin{align*}
  b_{\mu\nu}(q)=\frac{\prod_{i\geq 1}[\nu_i]_q!}{\prod_{i\geq 1}[\mu_i-\mu_{i+1}]_q!}\langle W_\mu,h_{\nu}\rangle.
\end{align*}
Now take $\nu=(1^{2m})$ and $\mu=(m,m)$. It follows from Theorem \ref{thm:bviasetstat} that
\begin{align*}
  b_{\mu\nu}(q)=\sum_{\AA\in \Pi_{2m}(2^m)}q^{i_\alpha(\AA)}=T_m(q),
\end{align*}
since $i_\alpha(\AA)$ is simply equal to the number of crossings of $\AA$ by Remark \ref{rem:crossings}. It follows that the polynomials $T_m(q)$ can be expressed as 
\begin{align*}
  T_m(q)&=\frac{1}{[m]_q!} \langle W_{(m,m)},h_1^{2m} \rangle.\qedhere
\end{align*}
\end{proof}
\section{$q$-Stirling numbers indexed by integer partitions}
\label{sec:stirling}
In this section we define a new class of $q$-Stirling numbers $S_q(n,m;\nu)$ indexed by integer partitions $\nu$ of $n$. In the case where $\nu$ has all parts equal to 1, these numbers coincide with the $q$-Stirling numbers of the second kind defined by Carlitz \cite{MR1501675}. First, we consider a $q=1$ analog of these numbers. Given a composition $\alpha$ of $n$, let $S(n,m;\alpha)$ denote the number of set partitions $\AA$ of $[n]$ which contain $m$ blocks and intersect $\AA_\alpha$ minimally. It is easy to see that $S(n,m;\alpha)$ is invariant under a rearrangement of the coordinates of $\alpha$; therefore we only need to consider $S(n,m;\nu)$ for integer partitions $\nu$. Note that when $\nu$ has all parts equal to 1, $S(n,m;\nu)$ equals the number of set partitions of $[n]$ into $m$ blocks and is therefore simply the classical Stirling number of the second kind.  

Since $b_{\mu\nu}(1)=|\Pi(\mu',\nu)|$ by Theorem~\ref{thm:bviasetstat}, it follows that
\begin{align}\label{eq:stirlingnu}
  S(n,m;\nu)=\sum_{\substack{\mu\vdash n\\ \mu_1=m}}b_{\mu\nu}(1).
\end{align}
When $\nu$ has precisely one part, $S(n,m;\nu)$ is 1 if $n=m$ and zero otherwise. The numbers $S(n,m;\nu)$ satisfy the following recursion.
\begin{proposition}\label{prop:nustirling}
For each partition $\nu=(\nu_1,\ldots,\nu_k)$ of $n$,
  \begin{align*}
    S(n,m;\nu)=\sum_{r=0}^{\nu_k}S(n-\nu_k,m-r,\hat{\nu}){\nu_k \choose r}{m-r \choose \nu_k-r}(\nu_k-r)!,
  \end{align*}
  where $\hat{\nu}$ denotes the partition obtained by deleting the last part of $\nu$.
\end{proposition}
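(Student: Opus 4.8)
The plan is to give a direct combinatorial argument by conditioning on the behaviour of the last block of the canonical set partition $\AA_\nu$. Writing $\AA_\nu=A_1\cup\cdots\cup A_k$ as in Definition~\ref{def:canonical} with $\alpha=\nu$, the equality $\nu_1+\cdots+\nu_k=n$ makes the last block $A_k=\{n-\nu_k+1,\ldots,n\}$, which consists of the $\nu_k$ largest elements of $[n]$; deleting it therefore leaves exactly $[n-\nu_k]$, whose canonical set partition for $\hat\nu=(\nu_1,\ldots,\nu_{k-1})$ is $\AA_{\hat\nu}=A_1\cup\cdots\cup A_{k-1}$. I would fix a set partition $\AA$ of $[n]$ into $m$ blocks that intersects $\AA_\nu$ minimally and classify it by the number $r$ of elements of $A_k$ that form singleton blocks of $\AA$, so that $0\le r\le\nu_k$.

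The first main step is to analyze the deletion of $A_k$. Since minimal intersection forces each block of $\AA$ to contain at most one element of $A_k$, the $\nu_k$ elements of $A_k$ lie in $\nu_k$ distinct blocks. Removing these elements yields a set partition $\AA'$ of $[n-\nu_k]$: the $r$ singletons vanish, the remaining $\nu_k-r$ blocks meeting $A_k$ survive without their $A_k$-element, and the other $m-\nu_k$ blocks are untouched, so $\AA'$ has $(\nu_k-r)+(m-\nu_k)=m-r$ blocks. Because deletion cannot create intersections with $A_1,\ldots,A_{k-1}$, the partition $\AA'$ still intersects $\AA_{\hat\nu}$ minimally and is hence counted by $S(n-\nu_k,m-r;\hat\nu)$.

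The second step is to count the fiber of this deletion map. Given $\AA'$ and the value $r$, I would reconstruct $\AA$ by first choosing which $r$ of the $\nu_k$ elements of $A_k$ become singletons, in $\binom{\nu_k}{r}$ ways, and then inserting the remaining $\nu_k-r$ elements into distinct blocks of $\AA'$, one per block. Since $\AA'$ has $m-r$ blocks, none containing an $A_k$-element, such an insertion is precisely an injection from the $\nu_k-r$ chosen elements into the $m-r$ blocks, of which there are $\binom{m-r}{\nu_k-r}(\nu_k-r)!$; moreover every reconstructed $\AA$ is automatically minimally intersecting with $\AA_\nu$, as each block acquires at most one element of $A_k$ while its intersections with $A_1,\ldots,A_{k-1}$ are unchanged. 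Summing $\binom{\nu_k}{r}\binom{m-r}{\nu_k-r}(\nu_k-r)!\,S(n-\nu_k,m-r;\hat\nu)$ over $r$ then gives the asserted recursion.

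The argument is essentially routine once set up, so I do not anticipate a serious obstacle; the only points requiring care are the verification that the deletion map is well defined and surjective onto the partitions counted by $S(n-\nu_k,m-r;\hat\nu)$, that the reconstruction is a genuine bijection onto each fiber (in particular that single-element insertion preserves minimal intersection), and that the degenerate cases such as $\nu_k>m$ vanish consistently on both sides through the coefficient $\binom{m-r}{\nu_k-r}$.
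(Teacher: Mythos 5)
Your proposal is correct and follows essentially the same argument as the paper: delete the last block $A_k$ of $\AA_\nu$ from $\AA$, condition on the number $r$ of singletons thereby removed, and count the fiber by choosing the $r$ singleton elements and then injecting the remaining $\nu_k-r$ elements into the $m-r$ blocks. The paper's proof is a more condensed version of exactly this deletion--reconstruction argument.
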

\begin{proof}
  Write $\AA_\nu=\cup_{i=1}^k A_i$ where $A_i=\{\nu_1+\cdots+\nu_{i-1}+1,\ldots,\nu_1+\cdots+\nu_i\}$ for $1\leq i\leq k$. Consider a set partition $\AA$ counted by $S(n,m;\nu).$ If we delete the elements of $A_k$ from the blocks of $\AA$, then we obtain a set partition $\AA'$ of $\cup_{i=1}^{k-1}A_i$ such that $\AA'$ intersects $\AA_{\hat{\nu}}$ minimally. The number of blocks in $\AA'$ is precisely $m-r$ where $r$ is the number of elements of $A_k$ that lie in singleton blocks of $\AA$. The number of set partitions $\AA'$ that can be obtained by the procedure above is counted by $S(n-\nu_k,m-r,\hat{\nu})$. The number of set partitions $\AA$ which yield a given set partition $\AA'$ can be counted by first choosing $r$ elements of $A_k$ which lie in singleton blocks and then assigning each of the remaining $\nu_k-r$ elements to the $m-r$ blocks of $\AA'$. Thus a given set partition $\AA'$ arises in precisely
  \begin{align*}
    {\nu_k \choose r}{m-r \choose \nu_k-r}(\nu_k-r)!
  \end{align*}
  ways. The proposition follows easily from these observations.
\end{proof}
For nonnegative integers $n,m$, the Carlitz $q$-Stirling numbers of the second kind are defined by
\begin{align}\label{eq:qsrec} 
  S_q(n,m)=S_q(n-1,m-1)+[m]_qS_q(n-1,m) \quad (n\geq 1,m\geq 1),
\end{align}
with $S_q(n,m)=\delta_{nm}$ (Kronecker delta) when either $n=0$ or $m=0$. The following definition  may be viewed as a $q$-analog of Eq.\ \eqref{eq:stirlingnu}. 
\begin{definition}\label{def:qstirlnu}
Define 
\begin{align*}
  S_q(n,m;\nu):=\sum_{\substack{\mu\vdash n\\\mu_1=m}}q^{\sum_{j\geq 2}{\mu_j \choose 2}}b_{\mu\nu}(q).
\end{align*}
\end{definition}

  When $\nu$ has all parts equal to 1, we will see that $S_q(n,m;\nu)$ coincides with the $q$-Stirling numbers of the second kind defined by Carlitz.     

  In view of Theorem \ref{thm:bviasetstat} which expresses $b_{\mu\nu}(q)$ in terms of a set partition statistic, we have the following alternate combinatorial expression for $S_q(n,m;\nu)$.
  \begin{proposition}\label{prop:stirlingviasetstat} 
    For each Mahonian statistic $\varphi$, 
    \begin{align*}
      S_q(n,m;\nu)=\sum q^{i_\nu(\AA)+\varphi_\nu(\AA)+\sum_{i\geq 1}(i-1)(\lambda^\AA_i-1)}, 
    \end{align*}
    where the sum is taken over all set partitions $\AA\in \Pi_n$ with $m$ blocks which intersect $\AA_\nu$ minimally and $\lambda^\AA$ denotes the shape of $\AA$. 
  \end{proposition}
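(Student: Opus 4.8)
The plan is to begin from Definition~\ref{def:qstirlnu} and substitute the set-partition expression for $b_{\mu\nu}(q)$ supplied by Corollary~\ref{cor:bviasetstat}. Fixing any Mahonian statistic $\varphi$, this yields
\begin{align*}
  S_q(n,m;\nu)=\sum_{\substack{\mu\vdash n\\\mu_1=m}}q^{\sum_{j\geq 2}{\mu_j \choose 2}}\sum_{\AA\in \Pi(\mu',\nu)}q^{i_\nu(\AA)+\varphi_\nu(\AA)}.
\end{align*}
The strategy is then to collapse this double sum into a single sum over set partitions, and finally to rewrite the power-of-$q$ prefactor in terms of the shape $\lambda^\AA$.

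For the collapse, I would observe that by definition an element $\AA\in \Pi(\mu',\nu)$ is precisely a set partition of shape $\lambda^\AA=\mu'$ that intersects $\AA_\nu$ minimally. Conjugating, $\mu=(\lambda^\AA)'$ is uniquely determined by $\AA$, and its first part $\mu_1=(\lambda^\AA)'_1$ equals the number of parts of $\lambda^\AA$, i.e.\ the number of blocks of $\AA$. Hence the pairs $(\mu,\AA)$ with $\mu\vdash n$, $\mu_1=m$, and $\AA\in \Pi(\mu',\nu)$ correspond bijectively to set partitions $\AA\in\Pi_n$ having exactly $m$ blocks and intersecting $\AA_\nu$ minimally. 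Reindexing absorbs the outer sum over $\mu$ into the sum over $\AA$.

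It remains to express $\sum_{j\geq 2}{\mu_j \choose 2}$ through $\lambda=\lambda^\AA=\mu'$, via the identity
\begin{align*}
  \sum_{j\geq 2}{\mu_j \choose 2}=\sum_{i\geq 1}(i-1)(\lambda_i-1)\qquad(\mu=\lambda').
\end{align*}
I would prove this by reading ${\mu_j \choose 2}$ as a count of pairs of rows of $\lambda$: writing $\mu_j=\#\{i:\lambda_i\geq j\}$ and using that $\lambda$ is weakly decreasing, one has ${\mu_j \choose 2}=\#\{i<i':\lambda_{i'}\geq j\}$. Summing over $j\geq 2$ and interchanging the order of summation, each pair $i<i'$ with $\lambda_{i'}\geq 1$ contributes the number of $j$ with $2\leq j\leq\lambda_{i'}$, namely $\lambda_{i'}-1$, and there are $i'-1$ choices of $i<i'$; this produces $\sum_{i'}(i'-1)(\lambda_{i'}-1)$, as claimed (terms with $\lambda_{i'}=0$ contribute nothing).

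Substituting this identity into the collapsed sum gives exactly
\begin{align*}
  S_q(n,m;\nu)=\sum q^{i_\nu(\AA)+\varphi_\nu(\AA)+\sum_{i\geq 1}(i-1)(\lambda^\AA_i-1)},
\end{align*}
the sum being over all $\AA\in\Pi_n$ with $m$ blocks intersecting $\AA_\nu$ minimally, which is the assertion. The only non-formal ingredient is the conjugation identity above; the remaining manipulations are bookkeeping, so I expect that identity — and keeping the row/column indices straight under conjugation — to be the main (if modest) obstacle.
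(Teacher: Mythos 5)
Your proposal is correct and is essentially the argument the paper intends (the paper states this proposition with only the remark that it follows from Corollary~\ref{cor:bviasetstat} applied to Definition~\ref{def:qstirlnu}): substitute the set-partition formula for $b_{\mu\nu}(q)$, merge the double sum using the bijection between partitions $\mu\vdash n$ with $\mu_1=m$ and shapes $\lambda=\mu'$ with $m$ parts, and apply the conjugation identity $\sum_{j\geq 2}\binom{\mu_j}{2}=\sum_{i\geq 1}(i-1)(\lambda_i-1)$, which you verify correctly. No gaps.
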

The next result shows that an analog of Proposition \ref{prop:nustirling} holds for $S_q(n,m;\nu)$. This recursion will be used in Section \ref{sec:rooktheory} to show that $S_q(n,m;\nu)$ corresponds to $q$-rook numbers of suitably defined Ferrers boards.
\begin{theorem}\label{thm:stirlrec}
  For each partition $\nu=(\nu_1,\ldots,\nu_k)$ of $n$,
  \begin{align*}
 S_q(n,m;\nu)=\sum_{r=0}^{\nu_k}q^{\nu_k-r \choose 2}{\nu_k \brack r}_q{m-r \brack \nu_k-r}_q[\nu_k-r]_q!\; S_q(n-\nu_k,m-r,\hat{\nu}),
  \end{align*}
    where $\hat{\nu}$ denotes the partition obtained by deleting the last part of $\nu$.
\end{theorem}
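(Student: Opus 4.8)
The plan is to start from Definition \ref{def:qstirlnu} and feed in the recursion of Corollary \ref{cor:recforb}. Writing $\hat{\nu}=(\nu_1,\dots,\nu_{k-1})$ and substituting
\[
b_{\mu\nu}(q)=\sum_{\rho:\,\mu/\rho\text{ horiz. }\nu_k\text{-strip}}b_{\rho\hat{\nu}}(q)\,\theta_{\mu/\rho}(q)
\]
into $S_q(n,m;\nu)=\sum_{\mu\vdash n,\ \mu_1=m}q^{\sum_{j\ge2}\binom{\mu_j}{2}}b_{\mu\nu}(q)$, I would interchange the order of summation so that the outer sum runs over $\rho\vdash n-\nu_k$ and the inner sum over all $\mu$ with $\mu_1=m$ for which $\mu/\rho$ is a horizontal $\nu_k$-strip. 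Since $S_q(n-\nu_k,m-r;\hat{\nu})=\sum_{\rho\vdash n-\nu_k,\ \rho_1=m-r}q^{\sum_{j\ge2}\binom{\rho_j}{2}}b_{\rho\hat{\nu}}(q)$, the natural move is to group the partitions $\rho$ according to the value of $\rho_1=m-r$. The whole recursion then follows provided the inner $\mu$-sum factors out a quantity that depends on $\rho$ only through $\rho_1$.

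Concretely, for fixed $\rho$ set $r=\mu_1-\rho_1$, $\ell=\nu_k-r$, and record each admissible $\mu$ by $c_i=\mu_i-\rho_i\ge0$; the horizontal-strip condition becomes $0\le c_{i+1}\le d_i:=\rho_i-\rho_{i+1}$, with $c_1=r$ and $\sum_i c_i=\nu_k$. Using $\theta_{\mu/\rho}(q)=\frac{[\nu_k]_q!}{[r]_q!}\prod_{i\ge1}{d_i\brack c_{i+1}}_q$ together with the elementary identity $\binom{\rho_j+c_j}{2}-\binom{\rho_j}{2}=c_j\rho_j+\binom{c_j}{2}$, I would reduce the inner sum (after pulling out the factor $q^{\sum_{j\ge2}\binom{\rho_j}{2}}$) to the $q$-binomial identity
\[
\sum_{\substack{0\le e_i\le d_i\\ \sum_i e_i=\ell}}\ \prod_{i\ge1}q^{\,e_i\rho_{i+1}+\binom{e_i}{2}}{d_i\brack e_i}_q=q^{\binom{\ell}{2}}{\rho_1\brack \ell}_q,
\]
where $e_i=c_{i+1}$ and $\rho_1=\sum_i d_i$. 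Granting this, and using ${\nu_k\brack r}_q=\frac{[\nu_k]_q!}{[r]_q![\ell]_q!}$, the inner sum equals $q^{\sum_{j\ge2}\binom{\rho_j}{2}}\,q^{\binom{\ell}{2}}{\nu_k\brack r}_q{m-r\brack \ell}_q[\ell]_q!$, which visibly depends on $\rho$ only through $\rho_1=m-r$. Summing over all $\rho\vdash n-\nu_k$ with $\rho_1=m-r$ collapses the remaining weighted sum to $S_q(n-\nu_k,m-r;\hat{\nu})$ and yields the stated recursion.

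The crux — and the step I expect to be the main obstacle — is the displayed $q$-binomial identity. My plan for it is to read each factor as an instance of the finite $q$-binomial theorem $\sum_{e=0}^{d}q^{\binom{e}{2}}{d\brack e}_q z^e=\prod_{a=0}^{d-1}(1+q^a z)$: replacing $z$ by $q^{\rho_{i+1}}z$ turns the $i$th factor into $\prod_{a=0}^{d_i-1}(1+q^{\,a+\rho_{i+1}}z)$, whose exponents $a+\rho_{i+1}$ run over the integer interval $[\rho_{i+1},\rho_i-1]$. The key observation is that as $i$ ranges over $1,2,\dots$ these intervals are disjoint and telescope to $[0,\rho_1-1]$ (empty intervals, where $d_i=0$, cause no trouble), so the full product over $i$ collapses to $\prod_{a=0}^{\rho_1-1}(1+q^a z)$; extracting the coefficient of $z^{\ell}$ on both sides, again by the $q$-binomial theorem, gives exactly $q^{\binom{\ell}{2}}{\rho_1\brack\ell}_q$. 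A cleaner alternative would be a bijective argument directly generalizing Proposition \ref{prop:nustirling}, tracking how $i_\nu$, $\varphi_\nu$ and the shape exponent in Proposition \ref{prop:stirlingviasetstat} change when the block $A_k$ of $\AA_\nu$ is deleted from a set partition; but the generating-function route above is the most economical, and I would present that.
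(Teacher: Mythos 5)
Your proposal is correct and follows essentially the same route as the paper's proof: substitute the recursion of Corollary \ref{cor:recforb} into Definition \ref{def:qstirlnu}, interchange the sums, group by $\rho_1=m-r$, and reduce the inner sum over horizontal strips to the $q$-binomial identity $\sum_{\sum e_i=\ell}\prod_i q^{e_i\rho_{i+1}+\binom{e_i}{2}}{\rho_i-\rho_{i+1}\brack e_i}_q=q^{\binom{\ell}{2}}{\rho_1\brack\ell}_q$. The only difference is that the paper disposes of this last identity by citing an $\ell$-fold $q$-Vandermonde convolution, whereas you give a self-contained proof via the finite $q$-binomial theorem and the telescoping of the intervals $[\rho_{i+1},\rho_i-1]$ --- a correct and slightly more explicit treatment of the same step.
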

\begin{proof}
If we write $n(\mu)=\sum_{j\geq 2}{\mu_j \choose 2}$, it follows from Proposition \ref{cor:recforb} that  
  \begin{align*}
    S_q(n,m;\nu)&=\sum_{\substack{\mu\vdash n\\\mu_1=m}}q^{n(\mu)}b_{\mu\nu}(q)\\
                &=\sum_{\substack{\mu\vdash n\\\mu_1=m}}q^{n(\mu)}\sum_{\substack{\rho:\mu/\rho\text{ is a horizontal}\\ \text{strip of size } \nu_k}}b_{\rho,\hat{\nu}}(q) \frac{[\nu_k]!_q}{[\mu_1-\rho_1]!_q} \prod_{j\geq 1}{\rho_{j}-\rho_{j+1} \brack \mu_{j+1}-\rho_{j+1}}_q\\
                &=\sum_{r=0}^{\nu_k}\sum_{\substack{\mu\vdash n\\\mu_1=m}}\;\sum_{\substack{\rho:\mu/\rho\text{ is a horizontal}\\ \text{strip of size } \nu_k \\\rho_1=m-r}}q^{n(\mu)}b_{\rho,\hat{\nu}}(q) \frac{[\nu_k]!_q}{[r]!_q} \prod_{j\geq 1}{\rho_{j}-\rho_{j+1} \brack \mu_{j+1}-\rho_{j+1}}_q.
  \end{align*}
  Now interchange the two inner sums to obtain 
  \begin{align*}
               &\sum_{r=0}^{\nu_k}\frac{[\nu_k]!_q}{[r]!_q}\sum_{\substack{\rho\vdash n-\nu_k\\ \rho_1=m-r}}\; \sum_{\substack{\mu:\mu/\rho\text{ is a horizontal}\\ \text{strip of size }\nu_k \\ \mu_1=m}}q^{n(\mu)}b_{\rho,\hat{\nu}}(q)  \prod_{j\geq 1}{\rho_{j}-\rho_{j+1} \brack \mu_{j+1}-\rho_{j+1}}_q\\
                &=\sum_{r=0}^{\nu_k}\frac{[\nu_k]!_q}{[r]!_q}\sum_{\substack{\rho\vdash n-\nu_k\\ \rho_1=m-r}}b_{\rho,\hat{\nu}}(q)q^{n(\rho)+{\nu_k-r \choose 2}}\sum_{\substack{\mu:\mu/\rho\text{ is a horizontal}\\ \text{strip of size } \nu_k\\ \mu_1=m}}q^{n(\mu)-n(\rho)-{\nu_k-r \choose 2}}  \prod_{j\geq 1}{\rho_{j}-\rho_{j+1} \brack \mu_{j+1}-\rho_{j+1}}_q.
  \end{align*}
  We claim that the innermost sum above is simply ${m-r \brack \nu_k-r}_q.$ To see this, suppose $\rho$ has $\ell$ parts and let $x_j=\mu_{j+1}-\rho_{j+1}$ for $1\leq j\leq \ell$. Then $n(\mu)-n(\rho)=\sum_{j\geq 1}{x_j \choose 2}+x_j\rho_j$. Since $\sum_{j\geq 1}x_j=\nu_k-r$, we also have ${\nu_k-r \choose 2}=\sum_{j\geq 1}{x_j \choose 2}+\sum_{1\leq j<i}x_jx_i$. Therefore, the innermost sum above is equal to
  \begin{align*}
   \sum_{x_1+x_2+\cdots+x_{\ell}=\nu_k-r} q^{\sum_{j=1}^{\ell}x_j(\rho_{j+1}-(x_{j+1}+\cdots+x_{\ell}))} \prod_{j=1}^{\ell}{\rho_{j}-\rho_{j+1} \brack x_{j}}_q={\rho_1 \brack \nu_k-r}_q,
  \end{align*}
by an $\ell$-fold Vandermonde convolution for $q$-binomial coefficients. The claim now follows since $\rho_1=m-r$. It follows that 
  \begin{align*}
    S_q(n,m;\nu)  &=\sum_{r=0}^{\nu_k}\frac{[\nu_k]!_q}{[r]!_q}{m-r \brack \nu_k-r}_qq^{\nu_k-r \choose 2}\sum_{\substack{\rho\vdash n-\nu_k\\ \rho_1=m-r}}b_{\rho,\hat{\nu}}(q)q^{n(\rho)}\\
    &=\sum_{r=0}^{\nu_k}\frac{[\nu_k]!_q}{[r]!_q}{m-r \brack \nu_k-r}_qq^{\nu_k-r \choose 2}S_q(n-\nu_k,m-r,\hat{\nu}), 
  \end{align*}
proving the theorem.
\end{proof}

\begin{remark}\label{rem:stirlingbase}
When $\nu$ has a single part, it is easily verified using Definition \ref{def:qstirlnu} that $S_q(n,m;\nu)=1$ if $n=m$ and 0 otherwise. Along with this base case, Theorem \ref{thm:stirlrec} can be used to efficiently compute $S_q(n,m;\nu)$.   
\end{remark}

Note that when $\nu$ has all parts equal to 1, the recursion of Theorem \ref{thm:stirlrec} reads
\begin{align*}
  S_q(n,m;(1^n))=S_q(n-1,m-1;(1^{n-1}))+[m]_qS_q(n-1,m;(1^{n-1})),
\end{align*}
which coincides with the defining recursion \eqref{eq:qsrec} for the $q$-Stirling numbers of Carlitz. 
\section{Stirling numbers and $q$-rook theory} 
\label{sec:rooktheory}
Rook theory was introduced by Kaplansky and Riordan \cite{MR0016082} in the 1940s and, ever since, has found several applications in enumerative combinatorics. The basic idea of rook theory is to count the number of placements of a given number of nonattacking rooks on a chess board; these are called rook numbers of the board. Garsia and Remmel \cite{MR834272} developed $q$-analogues of the rook numbers by defining a statistic on rook placements. Haglund \cite{MR1422066} found intricate connections between $q$-rook theory and basic hypergeometric series and proved that a large class of $q$-rook numbers can be expressed in terms of basic hypergeometric series of Karlsson--Minton type (see Gasper and Rahman \cite[Sec.\ 1.9]{MR2128719} for the definition). Ding \cite{MR1452940} found connections between rook theory and algebraic topology by exhibiting algebraic varieties for which the Poincaré polynomials of cohomology are in fact $q$-rook polynomials. Rook theory has also been considered in the context of matrices over finite fields; see, for instance, the papers by Haglund \cite{MR1612854} and Solomon \cite{MR1065211}. In this section we briefly describe the $q$-rook theory of Garsia and Remmel and show that the $q$-Stirling numbers $S_q(n,m;\nu)$ discussed in Section \ref{sec:stirling} are closely related to the $q$-rook numbers of certain types of Ferrers boards.   

For a sequence $ a_1\leq \cdots \leq a_r$ of nonnegative integers, the Ferrers board of shape $(a_1,\ldots,a_r)$ is defined by
\begin{align*}
  B(a_1,\ldots,a_r)=\{(i,j):1\leq i\leq r, 1\leq j\leq a_i\}.
\end{align*}
 The board $B$ is visually represented as an array of squares in the Cartesian plane such that the square corresponding to $(i,j)\in B$ has its upper right corner at the point $(i,j)$. For instance, the board $B(2,4,5)$ is show in Figure \ref{fig:fb245}.
\begin{figure}[!ht]
  \centering
  \includegraphics[scale=1]{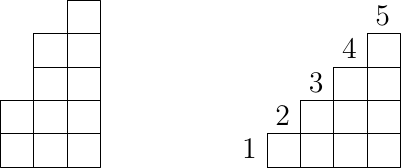}
  \caption{Left: the Ferrers board $B(2,4,5)$; Right: the staircase board of size 5 with rows and columns labeled.}
  \label{fig:fb245}
\end{figure}
By the staircase board of size $n$, we mean the Ferrers board $B_n=B(1,2,\ldots,n-1)$. Given a Ferrers board $B$, a placement of $m$ nonattacking rooks on $B$ is simply a subset $P\subseteq B$ of cardinality $m$ such that, for distinct elements $(i,j)$ and $(k,l)$ of $P$, we have $i\neq k$ and $j\neq l$. Each set partition $\AA$ of $[n]$ with $m$ blocks corresponds uniquely to a placement of $n-m$ nonattacking rooks on a staircase board of size $n$ (Stanley \cite[Sec. 2.4]{MR2868112}). To see this correspondence, number the rows of the staircase board from $1$ to $n-1$ from bottom to top. The columns are numbered from $2$ to $n$ from left to right as shown in Figure \ref{fig:fb245}. A rook (indicated by a cross) is placed in the box in row $i$ and column $j$ whenever $i$ and $j$ are consecutive elements in the same block of $\AA$. For instance, the rook placement corresponding to the set partition $\AA=14|235$ is shown in Figure \ref{fig:rooks}.
\begin{figure}[!ht]
  \centering
  \includegraphics[scale=1]{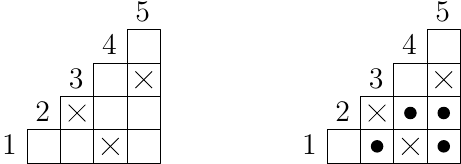}  
  \caption{Left: rook placement for $\AA=14|235$. Right: $\rho(P)=3$.}
  \label{fig:rooks}
\end{figure}
 For a rook placement $P$, we mark all squares that are below or to the right of some rook by a thick dot; these squares are said to be \emph{cancelled} by the rooks. The number of cells which do not contain a rook or a dot is denoted $\rho(P)$. An example is shown in Figure~\ref{fig:rooks}. The following definition is due to Garsia and Remmel \cite{MR834272}. 
\begin{definition}
The $m$th $q$-rook number of the Ferrers board $B$ is defined by
\begin{align*}
  r_m(B):=\sum_{P} q^{\rho(P)},
\end{align*}
where the sum is taken over all placements $P$ of $m$ nonattacking rooks on $B$.  
\end{definition}
  For each set partition $\AA$, define $\rho(\AA)=\rho(P)$, where $P$ is the placement of rooks on the staircase board corresponding to $\AA$. The $q$-Stirling numbers of the second kind defined in Section~\ref{sec:stirling} can be expressed in terms of the rook numbers of the staircase board $B_n$ by $S_q(n,m)=q^{-{m \choose 2}}r_{n-m}(B_n)$ (this is essentially \cite[Eq.\ 3.10]{MR834272} with a slightly different normalization for $S_q(n,m)$). Therefore    
\begin{align*}
  S_q(n,m)=\sum_{\AA}q^{\rho(\AA)-{m \choose 2}},
\end{align*}
where the sum is taken over all set partitions $\AA$ of $[n]$ with $m$ blocks.
\begin{definition}\label{def:stilde}
  Given an integer partition $\nu$, define
  \begin{align*}
    \tilde{S}_q(n,m;\nu)=\sum_{\AA}q^{\rho(\AA)-{m \choose 2}},
  \end{align*}
where the sum is taken over all set partitions $\AA$ of $[n]$ with $m$ blocks such that $\AA$ intersects $\AA_\nu$ minimally.
\end{definition}
When $\nu=(1^n)$, it is clear that $\tilde{S}_q(n,m;\nu)=S_q(n,m),$ the classical $q$-Stirling number of Carlitz. We will prove that $\tilde{S}_q(n,m;\nu)$ is in fact equal to $S_q(n,m;\nu)$ which was defined in terms of the polynomials $b_{\mu\nu}(q)$ in Section \ref{sec:stirling}.

\begin{definition}
  For each partition $\nu$, let $B(\nu)$ denote the Ferrers board whose column lengths are successively given by
  $$
\underbrace{\nu_1,\ldots,\nu_1}_{\text{$\nu_2$ times}},\underbrace{\nu_1+\nu_2,\ldots,\nu_1+\nu_2}_{\text{$\nu_3$ times}},\ldots,\underbrace{\nu_1+\cdots+\nu_{k-1},\ldots,\nu_1+\cdots+\nu_{k-1}}_{\text{$\nu_k$ times}}.   $$
\end{definition}
\begin{lemma}\label{lem:stirlingasrook} 
  We have
  \begin{align*}
\tilde{S}_q(n,m;\nu)=  q^{-{m \choose 2}+\sum_{j\geq 1}{\nu_j \choose 2}}  r_{n-m}(B(\nu)).
  \end{align*}
\end{lemma}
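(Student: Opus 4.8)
The plan is to transport everything onto the staircase board $B_n$, where the bijection between set partitions of $[n]$ with $m$ blocks and placements of $n-m$ nonattacking rooks is already in place, to show that the minimal-intersection hypothesis simply confines the rooks to the sub-board $B(\nu)\subseteq B_n$, and to track how passing from $\rho_{B_n}$ to the rook statistic on $B(\nu)$ contributes exactly the advertised constant power of $q$.

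First I would reinterpret the condition that $\AA$ intersects $\AA_\nu$ minimally. Since the blocks of $\AA_\nu$ are intervals of consecutive integers, two elements of a block of $\AA$ lie in a common block of $\AA_\nu$ if and only if two \emph{consecutive} elements of that block already do. Hence $\AA$ is minimally intersecting precisely when no two consecutive elements $i<j$ of a block of $\AA$ satisfy $\SS_\nu(i)=\SS_\nu(j)$. Under the rook correspondence a rook sits in row $i$, column $j$ exactly when $i<j$ are consecutive in a block of $\AA$, so the minimally intersecting placements are exactly those supported on the \emph{allowed} cells $(i,j)$ with $\SS_\nu(i)<\SS_\nu(j)$, while the \emph{forbidden} cells are those $(i,j)$, $i<j$, lying in a common block $A_s$ of $\AA_\nu$.

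Next I would identify the allowed cells with the board $B(\nu)$. For a column $j$ belonging to the block $A_s$, the allowed rows are precisely the elements of $A_1\cup\cdots\cup A_{s-1}$, a contiguous bottom segment of length $\nu_1+\cdots+\nu_{s-1}$; since $A_s$ contributes $\nu_s$ such columns, the allowed cells form exactly the Ferrers board $B(\nu)$ sitting inside $B_n$, and the forbidden cells number $\binom{\nu_s}{2}$ per block, hence $\sum_{j\geq 1}\binom{\nu_j}{2}$ in total. Thus the minimally intersecting $\AA$ with $m$ blocks correspond bijectively to placements $P$ of $n-m$ nonattacking rooks on $B(\nu)$, with $\rho(\AA)=\rho_{B_n}(P)$.

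It then remains to compare $\rho_{B_n}(P)$ with the rook statistic on $B(\nu)$. The cancellation rule (a cell is cancelled when it lies below a rook in its column or to the right of a rook in its row) depends only on the rook positions, so each cell of $B(\nu)$ has the same cancellation status computed in $B_n$ or in $B(\nu)$; these contribute $\rho_{B(\nu)}(P)$. The crux, and the step I expect to require the most care, is to verify that every forbidden cell is uncancelled: for a forbidden cell $(i,j)$ with $i,j\in A_s$, any rook in column $j$ lies in a row $\leq\nu_1+\cdots+\nu_{s-1}<\min A_s\leq i$, hence strictly below the cell, and any rook in row $i$ lies in a column $>\max A_s\geq j$, hence strictly to its right, so no rook cancels it. Consequently $\rho_{B_n}(P)=\rho_{B(\nu)}(P)+\sum_{j\geq 1}\binom{\nu_j}{2}$ for every such placement. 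Summing $q^{\rho(\AA)-\binom{m}{2}}$ over the minimally intersecting $\AA$ with $m$ blocks, factoring out the constant term, and using $r_{n-m}(B(\nu))=\sum_P q^{\rho_{B(\nu)}(P)}$ yields $\tilde{S}_q(n,m;\nu)=q^{-\binom{m}{2}+\sum_{j\geq 1}\binom{\nu_j}{2}}\,r_{n-m}(B(\nu))$, as claimed. As a sanity check, for $\nu=(1^n)$ the board $B(\nu)$ is $B_n$ and the correction term vanishes, recovering $S_q(n,m)=q^{-\binom{m}{2}}r_{n-m}(B_n)$.
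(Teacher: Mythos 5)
Your proposal is correct and follows essentially the same route as the paper: identify minimally intersecting partitions with rook placements on $B_n$ avoiding the forbidden cells, observe that the forbidden cells form staircases of sizes $\nu_i$ whose removal leaves exactly $B(\nu)$, and account for the shift of $\sum_{j}\binom{\nu_j}{2}$ in the statistic. In fact you supply a justification the paper leaves implicit, namely that every forbidden cell is uncancelled by any rook of an allowed placement, which is precisely why each deleted square contributes a factor of $q$ to $\rho$.
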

\begin{proof}
Note that the set partitions $\AA$ which contribute to $\tilde{S}_q(n,m;\nu)$ correspond to placements of $n-m$ nonattacking rooks on $B_n$ such that no rook is placed in row $i$ and column $j$ whenever $i$ and $j$ lie in the same block of $\AA_\nu$. For instance, when $\nu=(3,2,4)$, this corresponds to all placements of nonattacking rooks on the unmarked squares of the board shown in Figure \ref{fig:forbid}.

\begin{figure}[!ht]
  \centering
  \includegraphics[scale=1]{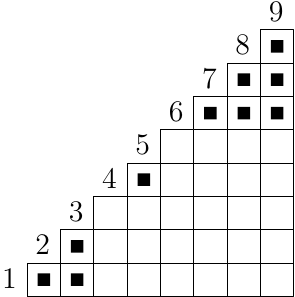} 
  \caption{The forbidden squares for rooks when $\nu=(3,2,4)$.}
  \label{fig:forbid} 
\end{figure}
 The forbidden squares for rooks form smaller staircase boards of size $\nu_i$ for $i\geq 1$. For each partition $\nu$ of $n$, deleting the forbidden squares from a staircase board of size $n$ results in the Ferrers board $B(\nu)$. Since $B(\nu)$ is obtained from the staircase board of size $n$ by deleting precisely $\sum_{j\geq 1}{\nu_j \choose 2}$ squares, it follows that  
  \begin{align*}
\tilde{S}_q(n,m;\nu)&=  q^{-{m \choose 2}+\sum_{j\geq 1}{\nu_j \choose 2}}  r_{n-m}(B(\nu)).\qedhere
  \end{align*}
\end{proof}
Denote by $B(a^b)$ the rectangular Ferrers board which has $b$ columns, each of length $a$. The following Lemma is a slight generalization of \cite[Thm.\ 3.3]{MR834272} and is easily proved by the same combinatorial argument given there.
\begin{lemma}\label{lem:rectangular}
  The $m$th $q$-rook number of the rectangular board $B=B(a^b)$ is given by
  \begin{align*}
    r_m(B)=q^{(a-m)(b-m)}{b \brack m}_q \frac{[a]_q!}{[a-m]_q!}.
  \end{align*}
\end{lemma}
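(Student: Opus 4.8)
The plan is to compute $r_m(B)$ for $B=B(a^b)$ directly from the definition by building up a rook placement one column at a time, reading the columns from left to right, and accounting for the statistic $\rho$ multiplicatively. The key structural observation that makes this work is that a rook only cancels cells lying strictly below it in its own column or strictly to its right in its own row; consequently a cell in column $j$ can be cancelled only by rooks sitting in columns $\le j$. Hence, once the contents of columns $1,\dots,j$ are fixed, the set of uncancelled empty cells lying in column $j$ is already final and is unaffected by any rook placed later. I label the rows $1,\dots,a$ from bottom to top and the columns $1,\dots,b$ from left to right, process the columns in this order, and maintain the number $k$ of rooks placed so far together with the set $R$ of rows they occupy (so $|R|=k$).

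At column $j$ there are two cases. If the column is left empty, then a cell $(i,j)$ is uncancelled precisely when row $i$ carries no rook to its left, i.e.\ $i\notin R$; this yields $a-k$ uncancelled empty cells and a factor $q^{a-k}$, while $k$ is unchanged. If instead a rook is placed in column $j$, it must occupy one of the $a-k$ rows not in $R$; if it is placed in the $s$-th lowest such available row, the uncancelled empty cells of the column are exactly the available rows strictly above it, of which there are $a-k-s$. Summing over the $a-k$ admissible positions gives $\sum_{s=1}^{a-k} q^{a-k-s}=[a-k]_q$, after which $k$ increases by one. Thus each \emph{skip} of a column with $k$ rooks already placed contributes $q^{a-k}$, and each \emph{placement} starting from $k$ rooks contributes $[a-k]_q$.

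It remains to assemble these factors over all placements of exactly $m$ rooks, i.e.\ over all ways to designate $m$ of the $b$ columns as placement columns. The $m$ placement columns, read left to right, contribute $\prod_{r=1}^{m}[a-r+1]_q=[a]_q!/[a-m]_q!$, independently of how they are interleaved with the skips. For a skip column with $k$ placements to its left I rewrite $a-k=(a-m)+(\text{number of placements to its right})$, so the skip columns together contribute $q^{(a-m)(b-m)}$ times $q$ raised to the number of (skip, placement) pairs with the skip to the left. Summing this last weight over all interleavings of $m$ placement symbols and $b-m$ skip symbols is a standard Mahonian computation producing exactly ${b \brack m}_q$. Multiplying the three contributions yields $r_m(B)=q^{(a-m)(b-m)}{b \brack m}_q\,[a]_q!/[a-m]_q!$, as claimed. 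The only genuinely delicate points are the first paragraph's assertion that left-to-right processing renders each column's uncancelled count final, which must be argued carefully from the precise cancellation rule, and the per-column identity $\sum_s q^{a-k-s}=[a-k]_q$; the identification of the skip sum with the Gaussian binomial is then routine.
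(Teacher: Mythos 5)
Your proof is correct. The paper gives no self-contained argument for this lemma---it simply cites Garsia--Remmel and asserts that "the same combinatorial argument" applies---and your left-to-right, column-by-column computation (skip columns contributing $q^{a-k}$, placement columns contributing $[a-k]_q$, with the interleaving sum producing the Gaussian binomial) is exactly that standard argument, written out in full.
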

The next lemma may be viewed as a generalization of \cite[Thm.\ 1.1]{MR834272}.
\begin{lemma}\label{lem:convolution}
Let $B$ be a Ferrers board in which each column has height at most $a$. If $\bar{B}=B\cup B(a^b)$ denotes the Ferrers board obtained by adding $b$ columns of height $a$ to the right of $B$, then
  \begin{align*}
    r_m(\bar{B})&=\sum_{j=0}^m r_j(B) q^{(a-m)(b-m+j)}{b \brack m-j}_q \frac{[a-j]_q!}{[a-m]_q!}.
  \end{align*}
\end{lemma}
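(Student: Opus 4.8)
The plan is to prove Lemma~\ref{lem:convolution} by the standard rook-theoretic technique of counting placements on $\bar B$ according to how many rooks fall in the appended rectangular block $B(a^b)$. The key observation is that a placement of $m$ nonattacking rooks on $\bar B = B \cup B(a^b)$ decomposes uniquely into a placement of some $j$ rooks on the sub-board $B$ together with a placement of the remaining $m-j$ rooks in the $b$ rightmost columns (the rectangular part). First I would fix $j$ with $0 \le j \le m$ and count, for a given placement $P_0$ of $j$ rooks on $B$, how many ways there are to extend $P_0$ to a placement of $m$ rooks on $\bar B$ by adding $m-j$ rooks in the rectangular columns, keeping track of the statistic $\rho$.

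Next I would analyze the $q$-weight. The cancellation statistic $\rho$ is additive across the decomposition up to a controllable shift: placing $m-j$ additional rooks in the rectangular block both removes squares from the available region and contributes its own cancellation count. Because every column of $B$ has height at most $a$ and the appended columns all have height exactly $a$, the rooks already placed in $B$ cancel squares in the new columns in a uniform way, so the number of available rows in the rectangular block for the new rooks is controlled purely by $a$, $b$, $j$, and $m$. The contribution of the rectangular extension, summed over all ways to place $m-j$ nonattacking rooks in a height-$a$, $b$-column block while respecting the cancellation already induced, should yield precisely the factor
\begin{align*}
  q^{(a-m)(b-m+j)}{b \brack m-j}_q \frac{[a-j]_q!}{[a-m]_q!}.
\end{align*}
I would verify this factor by a direct count analogous to Lemma~\ref{lem:rectangular}: the ${b \brack m-j}_q$ chooses which of the $b$ columns receive the new rooks, the ratio $[a-j]_q!/[a-m]_q!$ counts the ordered row-assignments in a block whose effective height has been reduced to $a-j$ by the cancellation from the $j$ rooks in $B$, and the prefactor $q^{(a-m)(b-m+j)}$ records the uncancelled squares remaining after all $m$ rooks are placed.

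Summing over all placements $P_0$ of $j$ rooks on $B$ replaces the $P_0$-sum by $r_j(B)$, and then summing over $j$ from $0$ to $m$ gives the stated identity. \textbf{The main obstacle} I expect is bookkeeping the cancellation statistic correctly across the boundary between $B$ and the rectangular block: one must check that the squares cancelled in the rectangular columns by rooks sitting in $B$ reduce the effective height from $a$ to exactly $a-j$ (and not something depending on the individual shape of $P_0$), and that the power of $q$ split off as $q^{(a-m)(b-m+j)}$ matches the global count of uncancelled cells. This is exactly the point where the hypothesis that each column of $B$ has height at most $a$ is used, since it guarantees that the $j$ rooks in $B$ lie in distinct rows all of which meet every appended column, producing a uniform reduction. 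Once this additivity of $\rho$ is pinned down, the rest is the routine factorization already carried out in the proof of \cite[Thm.\ 1.1]{MR834272}, which the same combinatorial argument handles.
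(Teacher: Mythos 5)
Your proposal is correct and follows essentially the same route as the paper: decompose each placement by the number $j$ of rooks landing on $B$, note that those $j$ rooks cancel $j$ full rows of the appended rectangle (which is where the height hypothesis enters), so the rectangular part contributes $r_{m-j}(B((a-j)^b))$, and then apply the rectangular formula of Lemma~\ref{lem:rectangular}. The factor you isolate is exactly $r_{m-j}(B((a-j)^b))$, so your verification step reduces to the already-proved rectangular case.
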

\begin{proof}
  Each placement of $m$ rooks on $\bar{B}$ corresponds to a placement of $j$ rooks on $B$ and $m-j$ rooks on $B(a^b)$ for some $0\leq j\leq m$. Placing $j$ nonattacking rooks on $B$ cancels $j$ rows in $B(a^b)$, leaving $a-j$ rows for the remaining $m-j$ rooks. Therefore,
  \begin{align*}
     r_m(\bar{B})&=\sum_{j=0}^m r_j(B)\; r_{m-j}(B((a-j)^b)).
  \end{align*}
The result now follows from Lemma \ref{lem:rectangular}.
\end{proof}
  \begin{theorem}
    We have $S_q(n,m;\nu)=\tilde{S}_q(n,m;\nu).$
  \end{theorem}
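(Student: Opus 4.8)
The plan is to prove that $\tilde{S}_q(n,m;\nu)$ obeys exactly the same recursion and the same base case as $S_q(n,m;\nu)$, and then to conclude by induction on the number of parts $k$ of $\nu$. Since $S_q$ satisfies the recursion of Theorem~\ref{thm:stirlrec} together with the single-part base case of Remark~\ref{rem:stirlingbase}, it suffices to establish these same two facts for $\tilde{S}_q$. The induction is driven entirely by the rook-theoretic description in Lemma~\ref{lem:stirlingasrook}.

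First I would record the geometric decomposition underlying the recursion. Writing $a=\nu_1+\cdots+\nu_{k-1}=n-\nu_k$ and $b=\nu_k$, the last $\nu_k$ columns of $B(\nu)$ all have height $a$, while the remaining columns are precisely the columns of $B(\hat\nu)$; moreover every column of $B(\hat\nu)$ has height at most $\nu_1+\cdots+\nu_{k-2}=a-\nu_{k-1}\le a$. Hence $B(\nu)=B(\hat\nu)\cup B(a^b)$ with $B(\hat\nu)$ of maximal column height at most $a$, so Lemma~\ref{lem:convolution} applies with rook number $n-m$. Combining the ingredients, Lemma~\ref{lem:stirlingasrook} rewrites $\tilde{S}_q(n,m;\nu)=q^{-{m\choose 2}+\sum_{j}{\nu_j\choose 2}}\,r_{n-m}(B(\nu))$ and likewise expresses each $r_i(B(\hat\nu))$ through $\tilde{S}_q(n-\nu_k,\cdot\,;\hat\nu)$, while Lemma~\ref{lem:convolution} expands $r_{n-m}(B(\nu))$ as a sum over the number $i$ of rooks placed on $B(\hat\nu)$. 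Substituting $i=n-\nu_k-m+r$ turns the summation index into $r$ running from $0$ to $\nu_k$, and one checks that ${\nu_k \brack (n-m)-i}_q={\nu_k \brack r}_q$ and that $[a-i]_q!/[a-(n-m)]_q!=[m-r]_q!/[m-\nu_k]_q!={m-r \brack \nu_k-r}_q[\nu_k-r]_q!$, so every combinatorial factor aligns with the corresponding factor in Theorem~\ref{thm:stirlrec}.

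The step I expect to require the most care is the bookkeeping of the powers of $q$. After the substitution $i=n-\nu_k-m+r$ the exponent contributed by Lemma~\ref{lem:convolution}, namely $(a-(n-m))(b-(n-m)+i)=(m-\nu_k)r$, together with the prefactors from Lemma~\ref{lem:stirlingasrook} produces the total exponent $-{m\choose 2}+{\nu_k\choose 2}+(m-\nu_k)r$, and this must equal the exponent ${\nu_k-r\choose 2}-{m-r\choose 2}$ dictated by Theorem~\ref{thm:stirlrec}. This reduces to the elementary identity ${a-r\choose 2}-{a\choose 2}=-ar+{r+1\choose 2}$, applied once with $a=\nu_k$ and once with $a=m$; taking the appropriate difference cancels the two copies of ${r+1\choose 2}$ and leaves precisely the linear term $(m-\nu_k)r$, confirming the match.

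With the recursion verified, the base case is immediate: when $k=1$ the board $B(\nu)$ has no columns, so $r_{n-m}(B(\nu))$ equals $1$ if $n=m$ and $0$ otherwise, whence Lemma~\ref{lem:stirlingasrook} gives $\tilde{S}_q(n,m;(n))=\delta_{nm}$, matching Remark~\ref{rem:stirlingbase}. Since $\tilde{S}_q$ and $S_q$ agree on single-part partitions and satisfy the same recursion reducing $\nu$ to $\hat\nu$, induction on $k$ yields $S_q(n,m;\nu)=\tilde{S}_q(n,m;\nu)$ for all $\nu$, completing the proof.
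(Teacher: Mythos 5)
Your proposal is correct and follows essentially the same route as the paper's own proof: decompose $B(\nu)=B(\hat\nu)\cup B(a^b)$ with $a=n-\nu_k$, $b=\nu_k$, apply Lemma~\ref{lem:convolution} to $r_{n-m}(B(\nu))$, reindex the sum, translate back through Lemma~\ref{lem:stirlingasrook}, and match the result against the recursion of Theorem~\ref{thm:stirlrec} together with the single-part base case. Your explicit verification of the column-height hypothesis and of the $q$-exponent identity is a slightly more detailed rendering of the same computation, and both check out.
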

  \begin{proof}
When $\nu$ has a single part, it is easily verified using Definition \ref{def:stilde} that $\tilde{S}_q(n,m;\nu)=1$ if $n=m$ and 0 otherwise. By Remark \ref{rem:stirlingbase}, it follows that $S_q(n,m;\nu)$ and $\tilde{S}_q(n,m;\nu)$ agree when $\nu$ has a single part. Therefore, it suffices to prove that $\tilde{S}_q(n,m;\nu)$ satisfies the recurrence of Theorem \ref{thm:stirlrec}. If $\hat{\nu}$ denotes the partition obtained by deleting the last part of $\nu$, then
  \begin{align*}
    B(\nu)=B(\hat{\nu})\cup B(a^b),
  \end{align*}
  where $a=\nu_1+\cdots+\nu_{k-1}=n-\nu_k$ and $b=\nu_k$. 
 Set $B=B(\hat{\nu})$ in Lemma \ref{lem:convolution} to obtain 
  \begin{align*}
    r_m(B(\nu))=\sum_{j}r_j(B(\hat{\nu}))q^{(n-\nu_k-m)(\nu_k-m+j)}{\nu_k \brack m-j}_q\frac{[n-\nu_k-j]_q!}{[n-\nu_k-m]_q!}.
  \end{align*}
  Replacing $m$ by $n-m$, this becomes
  \begin{align*}
    r_{n-m}(B(\nu))&=\sum_{j}r_j(B(\hat{\nu}))q^{(m-\nu_k)(\nu_k-n+m+j)}{\nu_k \brack n-m-j}_q\frac{[n-\nu_k-j]_q!}{[m-\nu_k]_q!}\\
    &=\sum_{j}r_{n-\nu_k-m+j}(B(\hat{\nu}))q^{(m-\nu_k)j}{\nu_k \brack j}_q\frac{[m-j]_q!}{[m-\nu_k]_q!},
  \end{align*}
  where the last step follows by replacing $j$ by $n-\nu_k-m+j$. By Lemma~\ref{lem:stirlingasrook}, it follows that
  \begin{align*}
    q^{{m \choose 2}-\sum_{i\geq 1}{\nu_i \choose 2}}\tilde{S}_q(n,m;\nu)=\sum_{j}\tilde{S}_q(n-\nu_k,m-j;\hat{\nu})q^{{m-j \choose 2}-\sum_{i=1}^{k-1}{\nu_i \choose 2}} q^{(m-\nu_k)j}{\nu_k \brack j}_q\frac{[m-j]_q!}{[m-\nu_k]_q!},
  \end{align*}
  which simplifies to
  \begin{align*}
    \tilde{S}_q(n,m;\nu)=\sum_{j}\tilde{S}_q(n-\nu_k,m-j;\hat{\nu})q^{\nu_k-j \choose 2}{\nu_k \brack j}_q {m-j \brack \nu_k-j}_q[\nu_k-j]_q!. 
  \end{align*}
This recurrence is identical to the one proved for $S_q(n,m;\nu)$ in Theorem~\ref{thm:stirlrec}. 
\end{proof}

  \section{Acknowledgments} 
The first author thanks Per Alexandersson for some helpful discussions. The first author was partially supported by a MATRICS grant MTR/2017/000794 awarded by the Science and Engineering Research Board and an Indo-Russian project DST/INT/RUS/RSF/P41/2021. The second author's research was partly supported by FWF Austrian Science Fund grant P32305.      

\printbibliography  
\end{document}